\documentclass{article}
\usepackage[preprint]{tmlr}

\usepackage{graphicx}
\usepackage{hyperref}
\hypersetup{
  hidelinks,
  hypertexnames=false,
  pdftitle={Dec-BFTRL: Improved Regret for Efficient Decentralized Online Upper-Linearizable Optimization with Separation Oracles},
  pdfauthor={Yiyang Lu}
}
\usepackage{url}

\DeclareRobustCommand{\algname}{\textnormal{\textsc{Dec-BFTRL}}}
\DeclareRobustCommand{\solvername}{\textnormal{\textsc{HybridNewton}}}

\usepackage{microtype}
\usepackage{subfigure}
\usepackage{booktabs}
\usepackage{wrapfig}
\usepackage{natbib}

\usepackage{braket}
\usepackage{algorithm}
\usepackage{algorithmic}
\usepackage{multirow}
\usepackage{makecell}
\usepackage{xcolor}
\usepackage{lmodern}
\usepackage{enumitem}
\let\AND\relax

\usepackage{amsmath}
\usepackage{amssymb}
\usepackage{mathtools}
\usepackage{amsthm}

\theoremstyle{plain}
\newtheorem{theorem}{Theorem}
\newtheorem{proposition}{Proposition}
\newtheorem{lemma}{Lemma}
\newtheorem{corollary}{Corollary}
\theoremstyle{definition}
\newtheorem{definition}{Definition}
\newtheorem{assumption}{Assumption}
\theoremstyle{remark}
\newtheorem{remark}{Remark}

\title{\raggedright Dec-BFTRL: Squre-Root Regret for Decentralized Online 
\\ Upper-Linearizable Optimization under Separation Access with Application to Continuous Submodular Maximization}
\author{\name Yiyang Lu \email yiyanglu@purdue.edu \\
     \addr Purdue University, West Lafayette, IN, USA\\
      \AND
      \name Mohammad Pedramfar \email mohammad.pedramfar@mila.quebec \\
      \addr Mila - Quebec AI Institute/McGill University, Montreal, QC, Canada\\
      \AND
      \name Vaneet Aggarwal \email vaneet@purdue.edu\\
      \addr  Purdue University, West Lafayette, IN, USA
 }

\begin{document}

\maketitle

\begin{abstract}
We study decentralized online optimization of upper-linearizable payoffs over an action set under efficient separation access, with applications to online continuous diminishing-return (DR) submodular maximization.
We propose Decentralized Barrier Follow-the-Regularized-Leader (Dec-BFTRL), and evaluate each agent's played action against the average of all local objectives. 
Each agent maps an internal iterate to a feasible action through an approximate gauge projection, communicates only a cumulative surrogate-gradient dual state, and invokes the local \solvername{} procedure to approximately minimize its post-communication BFTRL potential.
For every agent, we achieve expected network-aggregate regret of $\widetilde O(\sqrt{T})$. Over $T$ rounds, each agent uses $T$ neighbor-mixing steps and $\widetilde O(T)$ separation-oracle calls.
We give four wrapper instantiations covering three DR-submodular maximization problems.
\end{abstract}

\section{Introduction}

The demand for real-time decision-making in large-scale systems, such as autonomous robotic swarms and networked sensor arrays, has driven the rapid development of decentralized online optimization \citep{li2002detection, xiao2007distributed, mokhtari2018decentralized}. In these distributed frameworks, individual agents must collaboratively process continuous data streams and adapt to evolving environments without relying on a central coordinating server.
A fundamental theoretical and practical problem arises when the time-varying global objectives are highly non-convex. Fortunately, a wide array of critical network applications, including dynamic pricing, recommendation algorithms, inventory routing, mean-field variational inference, and power grid reconfiguration \citep{bian2019optimal, aldrighetti2021costs, ito2016large, hassani17_gradien_method_submod_maxim, mitra2021submodular, gu2023profit, mishra2017comprehensive}, exhibit structural regularities, most notably the economic property of diminishing returns (DR). In these scenarios, the marginal
benefit of an agent's action naturally decreases as the global system
approaches optimality, making cooperative online learning essential.
Although much of the literature models such objectives through continuous DR-submodularity or weak up-concavity, upper-linearizability provides a broader abstraction for designing unified algorithms across non-concave function classes.

\paragraph{Problem Setting and Metric.}
We consider \(N\) agents connected by a fixed, connected, undirected communication network. The agents share a compact convex action set \(\mathcal K\), which is accessed through a separation oracle and contains a known interior point. Before the game begins, an oblivious adversary fixes local payoff functions \(\{f_{t,j}\in\mathcal F:t\in[T],\,j\in[N]\}\), where \(j\) indexes the owner of a local payoff. At round \(t\), each agent \(a\) selects an action \(x_{a,t}\in\mathcal K\), receives feedback about its locally owned payoff \(f_{t,a}\) (which is not necessarily incurred at $x_{a,t}$), and exchanges information once with its neighbors before the next round.

Define the network-aggregate payoff $\mathsf F_t(x):=\frac1N\sum_{j=1}^N f_{t,j}(x).$
Although each agent receives only local feedback, its performance is measured using the aggregate payoff. For \(\alpha\in(0,1]\), the per-agent network-aggregate expected \(\alpha\)-regret is
\begin{align*}
\mathfrak R_\alpha^a(T):= \mathbb E\!\left[
\alpha\max_{x\in\mathcal K}\sum_{t=1}^T\mathsf F_t(x)
- \sum_{t=1}^T\mathsf F_t(x_{a,t})
\right].
\end{align*}
The expectation is over the randomness of the algorithm and its feedback. Section~\ref{sec:prelim} specifies the feedback interface, action-set geometry, and network assumptions.

\begin{table*}
    \centering
    \caption[Summary of D-OCSM Algorithms]{Summary of Decentralized Online Continuous DR-Submodular Maximization (D-OCSM) Algorithms. Let $\underline x\in\mathcal K$ denote the fixed feasible anchor used by Corollary~\ref{cor:nonmono_general}, with $\|\underline x\|_\infty<1$. `(Smooth)' means additional smoothness condition on the objective functions. Oracle-call counts follow the access model of each cited result, and SO and LOO calls should not be treated as equal-cost operations. The notation $\widetilde O(T)$ suppresses logarithmic dependence on $T$ and the set-conditioning parameters.}
    \label{tab:comparison}
    \resizebox{\textwidth}{!}{
    \begin{tabular}{llccccc}
        \toprule
        \textbf{Setting/Function Class} & \textbf{Reference/Algorithm} & \textbf{Approx. Ratio ($\alpha$)} & \textbf{$\log_T(\alpha$-Regret)} & \textbf{$\log_T$(Comm.)} & \textbf{Oracle Type} & \textbf{Total Oracle Calls} \\
        \midrule
        Monotone, $0 \in \mathcal{K}$ \footnotemark
        & DMFW \citep{zhu2021projection} & $1-e^{-1}$ & $1/2$ & $5/2$ & LOO & $O(T^{5/2})$ \\
        & Mono-DMFW \citep{zhang2023communication} & $1-e^{-1}$ & $4/5$ & $1$ & LOO & $O(T)$ \\
        & DOBGA \citep{zhang2023communication} & $1-e^{-1}$ & $1/2$ & $1$ & Projection & - \\
        & DPOBGA \citep{liao2023improved} & $1-e^{-1}$ & $3/4$ & $1/2$ & LOO & $O(T)$ \\
        & DROCULO ($\theta=1$) \citep{lu2025decentralized} & $1-e^{-1}$ & $1/2$ & $1$ & LOO & $O(T^2)$ \\
        & DROCULO ($\theta=1/2$) \citep{lu2025decentralized} & $1-e^{-1}$ & $3/4$ & $1/2$ & LOO & $O(T)$ \\
        & Corollary~\ref{cor:mono_0_in_K} (this work) & $1-e^{-1}$ & $1/2$ & $1$ & SO & $\tilde O(T)$ \\
        \midrule
        Non-monotone, General $\mathcal{K}$ 
        & DROCULO ($\theta=1$) \citep{lu2025decentralized} & $\frac{1 - \|\underline{\mathbf{x}}\|_\infty}{4}$ & $1/2$ & $1$ & LOO & $O(T^2)$ \\
        & DROCULO ($\theta=1/2$) \cite{lu2025decentralized} & $\frac{1 - \|\underline{\mathbf{x}}\|_\infty}{4}$ & $3/4$ & $1/2$ & LOO & $O(T)$ \\
        & \citet{wan2026improved} & $\frac{1 - \|\underline{\mathbf{x}}\|_\infty}{4}$ & $3/4$ & $1$ & LOO & $O(T)$ \\
        & \citet{wan2026improved} (Smooth) & $\frac{1 - \|\underline{\mathbf{x}}\|_\infty}{4}$ & $2/3$ & $1$ & LOO & $O(T)$ \\
        & Corollary~\ref{cor:nonmono_general} (this work) & $\frac{1 - \|\underline{\mathbf{x}}\|_\infty}{4}$ & $1/2$ & $1$ & SO & $\tilde O(T)$ \\
        \midrule
        Non-monotone, Down-closed $\mathcal{K}$ 
        & \citet{wan2026improved} & $1/e$ & $3/4$ & $1$ & LOO & $O(T)$ \\
        & \citet{wan2026improved} (Smooth) & $1/e$ & $2/3$ & $1$ & LOO & $O(T)$ \\
        & Corollary~\ref{cor:nonmono_downclosed} (this work) & $1/e$ & $1/2$ & $1$ & SO & $\tilde O(T)$ \\
        \bottomrule
    \end{tabular}
    }
\end{table*}
\footnotetext{The same results hold for general convex sets.}

\paragraph{Related Work.}
Recent development in \emph{decentralized online continuous submodular maximization} has produced several distinct regret-communication-computation tradeoffs \citep{zhu2021projection,zhang2023communication,liao2023improved, lu2025decentralized,wan2026improved}. Early algorithms focused on monotone continuous DR-submodular rewards, while \citet{lu2025decentralized} provided a decentralized framework for the broader upper-linearizable interface. Recent reductions sharpen several application-specific results for non-monotone continuous DR-submodular rewards \citep{wan2026improved}, specifically for smooth objectives. 
In the centralized setting, upper-linearizable algorithms and uniform wrappers provide the translation from online linear regret to a range of non-concave payoff models \citep{pedramfar2024linear,pedramfar2025uniform}, and are recently extended to down-closed DR-submodular maximization \citep{lu2026upper}.
For online convex optimization (OCO) over difficult action sets, another line of work replaces Euclidean projection with weaker action-set access \citep{hazan2012projection,garber2016linearly,levy2019projection, mhammedi2022efficient, garber2022new,mhammedi2025online} and proposed projection-free algorithms. Of particular relevance, \citet{mhammedi2025online} combine a separation-to-gauge reduction with a centralized Taylor Barrier-ONS analysis. 
This naturally motivates an oracle-specific question:
\begin{center}
    \emph{What performance can decentralized online upper-linearizable optimization achieve under separation access, and what is the resulting application in Decentralized Online Continuous Submodular Maximization?} 
\end{center}
We attain $\widetilde O(\sqrt T)$ regret, $O(T)$ communication, and $\widetilde O(T)$ separation-oracle calls per agent under the assumptions made precise in Section~\ref{sec:prelim}. Table~\ref{tab:comparison} reports the regret, communication, and action-set-oracle calls of the relevant methods under their respective access models. Because SO and LOO calls are distinct operations, the table compares asymptotic resource profiles rather than equal-cost operations or wall-clock performance. For a detailed discussion on oracle efficiency we refer to Remark~\ref{rmk:access-oracle} and the relevant projection-free OCO literature.

\paragraph{Contributions.}
We introduce \algname, a decentralized barrier-FTRL algorithm with a first-order dual-state communication layer for online upper-linearizable maximization under separation access, and prove $\widetilde O\bigl(\sqrt{T}\bigr)$ per-agent network-aggregated expected $\alpha$-regret. Over $T$ rounds, each agent uses $T$ neighbor-mixing operations, communicates one $d$-dimensional state per operation, and makes $O(T\log(\kappa T))$ calls to the action-set separation oracle. The number of \solvername{} updates is explicitly bounded for the controlled accuracy schedule. The general payoff theorem yields square-root-T $\alpha$-regret guarantees for three well known DR-submodular maximization problems.

\paragraph{Technical Novelties.}
Our main challenges and technical novelties are as follow:
\begin{enumerate}

\item \textbf{A first-order communication interface for a curved local map.}
The barrier-FTRL map \eqref{eq:local-potential} has agent-dependent curvature, so directly coordinating Newton updates would require more than first-order messages (e.g., the Hessian). Our design instead communicates only the dual state through \eqref{eq:theta-update}, and the exact-average identity \eqref{eq:exact-average-state} makes this state sufficient to define the hypothetical network average problem, while every Hessian and Newton step remains local.

\item \textbf{Coupling row-wise network mixing with barrier-FTRL stability.}
The network-aggregate metric evaluates every payoff owner's function \(f_{t,j}\) at one specified agent \(a\)'s action, whereas the surrogate analysis initially controls each payoff at its owner's iterate. 
To bridge this gap, we retain the cumulative row-wise mixing coefficient \(M_a(W)\) (Lemma~\ref{lem:row-mixing}) and use it to control the specified agent's dual-state disagreement (Lemma~\ref{lem:dual-consensus}). 
Strong convexity of the barrier-FTRL potentials then converts this row-wise dual control into primal disagreement, improving the network dependence factor from $\sqrt N$ to $1+\log N$. 
The uniform stability of the composed payoffs $f_{t,j}\circ h$ converts primal disagreement into the owner-action cross-terms and prevents consensus errors from accumulating into an $O(T)$ payoff mismatch (Lemma~\ref{lem:minimizer-disagreement}, \ref{lem:feasible-point-disagreement} and \eqref{eq:fixed-comparator-aggregate-bound}).

\item \textbf{Controlled approximate local barrier-FTRL potential solver.}
Exact local minimization is unrealistic, but uncontrolled errors could accumulate linearly with the horizon. The \solvername{} subroutine uses the self-concordant stopping rule \eqref{eq:solver-stopping}, and its computable Newton-decrement test guarantees the Euclidean error bound (Lemma~\ref{lem:decrement-certificate}). The accuracy schedule in Lemma~\ref{lem:local-solve-accuracy} keeps the cumulative error bounded by $E_T\leq R$, while the warm-start analysis yields the finite iteration bound given by Proposition~\ref{prop:hybrid-newton-guarantee}.
% while the structured-Hessian formula in Lemma~\ref{lem:structured-hessian} makes every Newton direction an $O(d)$ computation.

\end{enumerate}

%======================================================================================================

\section{Preliminaries}
\label{sec:prelim}
\label{sec:preliminaries}

\subsection{Notations}

Let $\mathsf{G}=(\mathcal V,\mathcal E)$ denote an undirected communication
network of $N$ agents, where $\mathcal V=\{1,\ldots,N\}$ is the set of agents
and $\mathcal E$ is the set of communication links. We associate the network
with a mixing matrix $W\in\mathbb R^{N\times N}$, specified formally in
Assumption~\ref{ass:network}.

We denote the Euclidean norm by $\|\cdot\|_2$. For a positive definite matrix $\Sigma\in\mathbb R^{d\times d}$, the induced local norm is $\|x\|_{\Sigma}:=\sqrt{x^\top\Sigma x}$. We write $\mathbb B_2:=\{u\in\mathbb R^d:\|u\|_2\leq1\}$ for the closed Euclidean unit ball.

\subsection{Definitions}

\paragraph{Separation oracle access and gauge projection.}
Given a closed convex set $\mathcal K \subseteq\mathbb R^d$ and a query point $x\in\mathbb R^d$, a separation oracle (SO) $\operatorname{Sep}_{\mathcal K}(x)$ either certifies that $x\in\mathcal K$ or returns a nonzero normal vector $s\in\mathbb R^d$ such that $\langle s,x-z\rangle>0, \forall z \in\mathcal K$, and we call $s$ a hyperplane separating $x$ from $\mathcal K$.
We say an algorithm is under separation access when the algorithm can access the action set via a separation oracle, and when such separation oracle is indeed efficient over the action set. 
\begin{remark}[Two access oracle models]
\label{rmk:access-oracle}
    A separation oracle (SO) and a linear optimization oracle (LOO) expose complementary forms of action-set access, so their call counts do not by themselves determine computational cost. Separation is particularly natural for sets described by convex inequalities or intersections of simple constraints, where an infeasible query can be separated by returning a violated constraint and its normal, whereas linear optimization requires optimizing over the full intersection. For example, for a full-dimensional packing region $\mathcal K={x\in[0,1]^d\leq b}, A\geq0$, with a known strictly feasible point, an SO scans the box and resource constraints and returns a violated inequality, while an LOO solves the associated packing linear program. In matrix domains, separation over the spectral-norm ball requires only a leading singular-vector pair, whereas linear optimization generally requires a full-rank SVD; the ordering reverses for the nuclear-norm ball \citep{garber2022new}. Thus \emph{neither oracle model uniformly dominates the other}.
\end{remark}

Under Assumption~\ref{ass:geometry}, a separation oracle for $\mathcal K$ also gives one for the translated set $\mathcal C=\mathcal K-x_0$ by translation.
For the closed convex set $\mathcal C$ containing the origin in its interior, the Minkowski gauge function is $\gamma_{\mathcal C}(u):=\inf\{a>0:u\in a\mathcal C\},$ and gauge distance is defined as $S_{\mathcal C}(u):=\max\{0,\gamma_{\mathcal C}(u)-1\}$.
Using the gauge function and gauge distance, the exact gauge projection scales an infeasible point along the ray from the origin:
\begin{align*}
    p_{\mathcal C}(u):=\frac{u}{1+S_{\mathcal C}(u)}\in\mathcal C, \quad
    w_{\mathcal K}(u):=x_0+p_{\mathcal C}(u)\in\mathcal K.
\end{align*}
Thus a feasible point is unchanged, while an infeasible point is scaled to the boundary. 
Specifically, we use the SO-based gauge subroutine of \citet{mhammedi2025online}, \textsc{GaugeDist}, which uses an approximate gauge projection controlled by a precision parameter \(\varepsilon_{\mathrm{gau}}\), rather than the exact gauge projection. 
For completeness, we include the implementation along with useful properties and SO-complexity in Appendix~\ref{app:gauge-oracle}, and discuss how it interacts with our main algorithm in Section~\ref{sec:main-algorithm}.

\paragraph{Linearizable Framework and Uniform Wrappers.} 
\cite{pedramfar2024linear} first develops the linearizable framework in centralized setting, providing a clean reduction from online linearizable optimization to online linear optimization. \cite{pedramfar2025uniform} further refines the reduction process by formulating the uniform wrappers for action, query and function.
\cite{pedramfar2024linear} also identifies two well known problems, namely monotone and non-monotone DR-submodular functions over general convex set as application of linearizable framework, which is later extended to non-monotone DR-submodular maximization over down-closed convex set with better approximation ratio and regret guarantee \citep{lu2026upper}. 
Moving beyond centralized setting, \cite{lu2025decentralized} uses linearizable framework as engine for decentralized setting and obtain results for the former two classes of DR-submodular maximization problems. 
Similarly, this work leverages it as a powerful interface so that as more function classes being added to the linearizable framework \citep{lu2026upper}, this will automatically create a compounding impact with our meta-algorithm to unlock online decentralized optimization of such functions. We formally introduce the definition of upper-linearizable functions.

\begin{definition}[Upper-linearizable function]
\label{def:upper-linearizable}
A function class $\mathcal F$ over $\mathcal K$ is upper-linearizable with structural parameters $(\alpha,\beta)\in(0,1]\times(0,\infty)$ and action map $h:\mathcal K\to\mathcal K$ 
if there exists an linearization proxy \(\mathfrak g:\mathcal F\times\mathcal K\to\mathbb R^d\)
such that, for every $f\in\mathcal F$ and $w,y\in\mathcal K$,
\begin{equation}
    \alpha f(y)-f(h(w))
    \leq
    \beta\langle\mathfrak g(f,w),y-w\rangle.
\end{equation}
\end{definition}

Following \citet{pedramfar2025uniform}, we associate the upper-linearizable representation and its base oracle with a fixed abstract uniform wrapper \(\mathcal W=(\mathcal W^{\mathrm{act}},\mathcal W^{\mathrm{qry}})\).
For each \(f\in\mathcal F\), let \(\mathcal O_f\) denote the base oracle\footnote{A base oracle may, for example, be a first-order oracle returning a possibly noisy gradient estimate, or a value oracle returning a possibly noisy function value.} through which \(f\) is accessed.
The action component is \(\mathcal W^{\mathrm{act}}=h\), while the query component transforms \(\mathcal O_f\) into a wrapped vector-valued oracle $\widehat{\mathcal O}_f := \mathcal W^{\mathrm{qry}}(\mathcal O_f).$
When invoked at an internal point \(w\), the wrapped oracle may use auxiliary randomness, query \(\mathcal O_f\) at one or more feasible points potentially different from both \(w\) and \(h(w)\), and transforms the resulting responses into a feedback vector. 
The wrapper pair associated with each function class and base oracle combination is specified in the applications.

\paragraph{Online Protocol.}
We consider an oblivious adversarial model. Before the game begins, the adversary fixes a sequence of local payoff functions \(\{f_{t,j}\in\mathcal F:t\in[T],\,j\in[N]\}\), where \(j\) indexes the owner of a local payoff. The future payoff functions and their oracle responses are not revealed to the agents in advance. 
At round \(t\), each agent \(a\in[N]\), using only the information available before the current feedback is observed, computes an internal feasible point \(w_{a,t}\in\mathcal K\) and plays \(\mathcal W^{\mathrm{act}}(w_{a,t})=h(w_{a,t}).\)
The agent then obtains access to the base oracle \(\mathcal O_{t,a}:=\mathcal O_{f_{t,a}}\), invokes the corresponding wrapped oracle at \(w_{a,t}\), and receives $q_{a,t}\sim\widehat{\mathcal O}_{t,a}(w_{a,t}),$ where $\widehat{\mathcal O}_{t,a} := \mathcal W^{\mathrm{qry}}(\mathcal O_{t,a}).$
The randomness in \(q_{a,t}\) may arise from the base oracle, the Query Wrapper, or both. Its required conditional mean and boundedness properties are given in Assumption~\ref{ass:feedback}.

\paragraph{Regret.} Under such protocol, the action \(x_{a,t}\) appearing in the regret definition from the Introduction is instantiated as $x_{a,t}=\mathcal W^{\mathrm{act}}(w_{a,t})=h(w_{a,t})$,
and the $\alpha$-regret\footnote{The coefficient $\alpha$ first appears as a structural parameter of the upper-linearization inequality, and it is not a tuning parameter of \algname{}. The payoff-lifting argument shows that this same coefficient scales the hindsight benchmark in the resulting regret guarantee, which motivates the common notation $R_\alpha^a(T)$.} of agent \(a\) becomes
\begin{align}
\mathfrak R_\alpha^a(T) := \mathbb E\!\left[
    \alpha\max_{y\in\mathcal K}\sum_{t=1}^T\mathsf F_t(y)
    -\sum_{t=1}^T\mathsf F_t\!\left(h(w_{a,t})\right) 
\right].
\label{eq:network-regret}
\end{align}
where $\mathsf F_t\!\left(\cdot\right) = \frac1N\sum_{j=1}^N f_{t,j}\!\left(\cdot\right),$ so the action-agent index \(a\) is held fixed while \(j\) ranges over all local-payoff owners, including \(j\neq a\).

\begin{remark}[Role of the approximation coefficient]
Even in the centralized offline setup, DR-submodular maximization could be NP-hard. As an example, \cite{bian17_guaran_non_optim} shows that for monotone DR-submodular functions $f$ over convex set containing the origin $\mathcal K$, it is NP-hard to find any point any point $x \in \mathcal{K}$ such that $f(x)$ is at least $(1 - e^{-1} + \epsilon)$ times the optimal value for any $\epsilon$. There are polynomial times algorithms that achieve the ratio of $1 - e^{-1}$, and such a number is referred to as the approximation coefficient, indicating the proportion of the optimal value we can achieve with polynomial time algorithms.
\end{remark}

\subsection{Assumptions}

\begin{assumption}[Translated geometry and separation access]
\label{assump:set}
\label{ass:geometry}
The feasible set $\mathcal K\subseteq\mathbb R^d$ is compact and convex. A
point $x_0\in\operatorname{int}(\mathcal K)$ and radii $0<r\leq R$ are known
such that, with $\mathcal C:=\mathcal K-x_0$,
\[
    r\mathbb B_2\subseteq\mathcal C\subseteq R\mathbb B_2.
\]
The set is accessed through a separation oracle for $\mathcal K$ and hence,
by translation, for $\mathcal C$. We set $\kappa:=R/r$.
\end{assumption}

\begin{assumption}[Network topology and communication matrix]
\label{assump:network}
\label{ass:network}
The communication network $\mathsf G=(\mathcal V,\mathcal E)$ is undirected and connected. Agents form convex combinations of their local states using a matrix $W\in\mathbb R^{N\times N}$ satisfying: 
1) Graph compatibility: $W_{ij}=0$ whenever $i\neq j$ and $\{i,j\}\notin\mathcal E$; 
2) Symmetry and double stochasticity: $W=W^\top$, $W_{ij}\geq0$, and $W\mathbf1=\mathbf1$; 
3) Contraction: with $J:=\mathbf1\mathbf1^\top/N$, $\rho:=\|W-J\|_2<1.$
Thus $\rho$ is the disagreement contraction factor and $1-\rho$ is the
absolute spectral gap.
\end{assumption}

% \begin{remark}[Interpretation of $\rho$ and the spectral gap]
% Let $J=\mathbf{1}\mathbf{1}^{\top}/N$. Since $Jz$ is the consensus component of any network state $z\in\mathbb{R}^N$ and $WJ=JW=J$, we have
% $\|Wz-Jz\|_2=\|(W-J)(z-Jz)\|_2\le \rho\|z-Jz\|_2$.
% Thus, $\rho=\|W-J\|_2$ controls the fraction of disagreement that can remain after one mixing step, motivating the name \emph{disagreement contraction factor}. Moreover, since $W$ is symmetric and doubly stochastic, $\lambda_1(W)=1$ and
% $\rho=\max_{i\ge 2}|\lambda_i(W)|$.
% Hence, $1-\rho=1-\max_{i\ge 2}|\lambda_i(W)|$ is the \emph{absolute spectral gap}: larger $1-\rho$ corresponds to faster contraction toward consensus, while smaller $1-\rho$ corresponds to slower network mixing.
% \end{remark}

\begin{assumption}[Conditionally unbiased bounded feedback]
\label{assump:gradients}
\label{ass:feedback}
Let $\mathcal F_t^{-}$ denote the sigma-field generated by the fixed payoff
sequence and all algorithmic and oracle randomness revealed before the
round-$t$ wrapped-oracle responses are generated, including the current
wrapper inputs $\{w_{a,t}\}_{a=1}^N$. For every agent $a$ and round $t$, the
wrapped feedback vector satisfies
\[
    \mathbb E[q_{a,t}\mid\mathcal F_t^{-}]
    =\mathfrak g(f_{t,a},w_{a,t}),
    \qquad
    \|q_{a,t}\|_2\leq G.
\]
\end{assumption}

\begin{assumption}[Uniform composed-payoff stability]
\label{ass:payoff-stability}
There is a horizon-independent constant $L_\phi\geq0$ such that, almost
surely, for every round $t$, payoff owner $j$, and $w,v\in\mathcal K$,
\begin{equation}
    \left|f_{t,j}(h(w))-f_{t,j}(h(v))\right|
    \leq L_\phi\|w-v\|_2.
    \label{eq:composed-payoff-lipschitz}
\end{equation}
Equivalently, each composed payoff $f_{t,j}\circ h$ is uniformly
$L_\phi$-Lipschitz on $\mathcal K$.
\end{assumption}

%======================================================================================================

\section{Decentralized Barrier Follow the Regularized Leader (Dec-BFTRL)}
\label{sec:main}

Having specified the online protocol and assumptions, we now present \algname{} and its main guarantee. The method combines three components: \textsc{GaugeDist} uses separation access to form a feasible wrapper input and the associated gauge correction; the agents convert their wrapped feedback into local surrogate losses and exchange only a single dual state; and \solvername{} locally computes an approximate minimizer of the resulting barrier-FTRL potential. 
Thus all feasibility and second-order computations remain local, while the network interaction is confined to one dual-state exchange per round.
Subsection~\ref{sec:main-algorithm} gives a round-by-round description of the method and its subroutines, while Subsection~\ref{sec:main-guarantee} states the main regret guarantee and provides a proof sketch. The supporting intermediate results and complete proof are deferred to Section~\ref{sec:analysis}.

\subsection{Main Algorithm}
\label{sec:main-algorithm}

At the beginning of round \(t\), each agent \(a\) holds two local states. The dual state \(\theta_{a,t}\in\mathbb R^d\) contains a mixed history of earlier surrogate losses and is the only state communicated to neighboring agents. The primal state \(u_{a,t}\in\operatorname{int}(R\mathbb B_2)\) is kept locally and approximately minimizes the \textit{BFTRL potential}
\begin{align}\label{eq:local-potential}
\Psi_{a,t}(u) := -\nu\log\!\left(R^2-\|u\|_2^2\right) +\frac{\mu_T}{2}\|u\|_2^2 +\langle\theta_{a,t},u\rangle .
\end{align}
and we denote its unique exact minimizer by $u_{a,t}^{\star} := \arg\min_{u\in\operatorname{int}(R\mathbb B_2)} \Psi_{a,t}(u).$
The barrier and quadratic terms together make \(\Psi_{a,t}\) strongly convex, with parameter $m_T:=\mu_T+\frac{2\nu}{R^2}.$
Initially, \(\theta_{a,1}=0\) and \(u_{a,1}=0=u_{a,1}^{\star}\). More generally, \((\theta_{a,t},u_{a,t})\) depends only on feedback from rounds preceding \(t\). Round \(t\) produces \((\theta_{a,t+1},u_{a,t+1})\), which is used at round \(t+1\). Algorithm~\ref{alg:dec-bftrl} summarizes this recursion.

\begin{algorithm}[h]
\caption{\algname: decentralized barrier-FTRL under separation access}
\label{alg:dec-bftrl}
\begin{algorithmic}[1]
\REQUIRE Horizon \(T\); separation-oracle access to \(\mathcal K\); known \(x_0,r,R\); symmetric doubly stochastic mixing matrix \(W\); abstract uniform wrapper \(\mathcal W=(\mathcal W^{\mathrm{act}},\mathcal W^{\mathrm{qry}})\); parameters \(\nu\geq1\), \(\mu_T>0\); local solve tolerances \(\{\varepsilon_{a,t}\}_{a\in[N],\,t=2,\ldots,T}\)
\STATE Set \(\mathcal C\leftarrow\mathcal K-x_0\) and \(\varepsilon_{\mathrm{gau}}\leftarrow 1/T\)
\STATE For every agent \(a\in[N]\), initialize \(\theta_{a,1}\leftarrow0\) and \(u_{a,1}\leftarrow0\)
\FOR{\(t=1,\ldots,T\)}
  \FOR{every agent \(a\in[N]\) in parallel}
    \STATE \((S_{a,t},s_{a,t})\leftarrow \textsc{GaugeDist} (\mathcal C,u_{a,t},\varepsilon_{\mathrm{gau}},r)\)
    \STATE \(p_{a,t}\leftarrow u_{a,t}/(1+S_{a,t})\) and \(w_{a,t}\leftarrow x_0+p_{a,t}\)
    \STATE Play \(x_{a,t}\leftarrow \mathcal W^{\mathrm{act}}(w_{a,t})=h(w_{a,t})\);
    \STATE Invoke the wrapped oracle \(\mathcal W^{\mathrm{qry}}(\mathcal O_{t,a})\) at \(w_{a,t}\) and receive \(q_{a,t}\)
    \STATE Let \(\ell_{a,t}\leftarrow-q_{a,t}\) and \(\displaystyle \widetilde\ell_{a,t}\leftarrow \ell_{a,t} - \mathbf 1\!\left\{\langle\ell_{a,t},u_{a,t}\rangle<0\right\} \langle\ell_{a,t},p_{a,t}\rangle s_{a,t}\)
    \STATE Exchange the dual state \(\theta_{a,t}\) with neighbors and update \(\displaystyle \theta_{a,t+1}\leftarrow \sum_{b=1}^N W_{ab}\theta_{b,t} +\widetilde\ell_{a,t}\)
    \IF{\(t<T\)}
      \STATE Form \(\Psi_{a,t+1}\) from \eqref{eq:local-potential}
      \STATE \(\displaystyle u_{a,t+1}\leftarrow \solvername( \Psi_{a,t+1}, u_{a,t}, \varepsilon_{a,t+1})\)
    \ENDIF
  \ENDFOR
\ENDFOR
\end{algorithmic}
\end{algorithm}

\paragraph{Gauge projection.}
The primal state \(u_{a,t}\) lies in the simple barrier domain \(R\mathbb B_2\), but it need not belong to the translated feasible set \(\mathcal C=\mathcal K-x_0\). The call to \textsc{GaugeDist} returns an approximate gauge distance \(S_{a,t}\) and an approximate subgradient \(s_{a,t}\), moderated by the gauge precision parameter \(\varepsilon_{\mathrm{gau}}\). Because \(S_{a,t}\) upper bounds the true gauge distance, the radial scaling $p_{a,t}=\frac{u_{a,t}}{1+S_{a,t}}$ belongs to \(\mathcal C\), and therefore \(w_{a,t}=x_0+p_{a,t}\) belongs to \(\mathcal K\). The vector \(s_{a,t}\) is retained for the surrogate-loss correction in the next step. This construction allows the barrier-FTRL update to operate on a Euclidean ball while producing a feasible input for the Action Wrapper. The corresponding feasibility, approximation, and separation-oracle guarantees are established in Section~\ref{sec:analysis-gauge}.

\paragraph{Feedback and surrogate loss.}
After constructing \(w_{a,t}\), the agent plays $x_{a,t} = \mathcal W^{\mathrm{act}}(w_{a,t}) = h(w_{a,t})$ and invokes the wrapped oracle \(\mathcal W^{\mathrm{qry}}(\mathcal O_{t,a})\) at \(w_{a,t}\), receiving the payoff-side vector \(q_{a,t}\). 
Because the barrier-FTRL update is written as a loss-minimization procedure, the agent first sets \(\ell_{a,t}=-q_{a,t}\). 
It then combines \(\ell_{a,t}\) with the gauge information \(p_{a,t}\) and \(s_{a,t}\) to form the corrected surrogate loss \(\widetilde\ell_{a,t}\), and the correction is activated only when \( \langle \ell_{a,t}, u_{a,t} \rangle < 0 \). 
Its role is to relate linear loss at the feasible point \(w_{a,t}\) to a bounded linear surrogate evaluated at the internal state \(u_{a,t}\). 
The precise comparison inequality and norm bound for \(\widetilde\ell_{a,t}\) are given in Section~\ref{sec:analysis-gauge}. The implementation and query cost of the wrapped oracle are specified separately for each application.

\paragraph{Communication and dual update.}
Once the current surrogate has been formed, each agent exchanges only its dual state \(\theta_{a,t}\) with its neighbors. Although this exchange appears after the feedback step in Algorithm~\ref{alg:dec-bftrl}, the communicated vector is the pre-update state and therefore contains only information from earlier rounds. Agent \(a\) first mixes these dual states and then injects its current local surrogate:
\begin{align}
\label{eq:theta-update}
    \theta_{a,t+1} = \sum_{b=1}^N W_{ab}\theta_{b,t} + \widetilde\ell_{a,t}.
\end{align}
Thus the feedback received at round \(t\) affects the primal state used at round \(t+1\), not the action already played at round \(t\). Graph compatibility of \(W\) makes the displayed sum implementable through neighbor communication. No primal iterate, feasible point, Hessian, or Newton information is exchanged. Section~\ref{sec:analysis-consensus} analyzes the evolution of the average dual state and controls disagreement through the network spectral gap.

\paragraph{HybridNewton update.}
The new dual state \(\theta_{a,t+1}\) defines the next local potential \(\Psi_{a,t+1}\). Agent \(a\) warm-starts \solvername{} from \(u_{a,t}\) and computes an interior point satisfying
\[
\|u_{a,t+1}-u_{a,t+1}^{\star}\|_2
\leq
\varepsilon_{a,t+1}.
\]
The logarithmic barrier keeps the Newton iterates inside \(R\mathbb B_2\), while strong convexity converts the solver’s Newton-decrement stopping condition into the stated Euclidean accuracy. The solver first uses damped Newton steps and then switches to full Newton steps once it enters the local convergence region. This computation is entirely local, and the returned point \(u_{a,t+1}\) begins the next round. The post-round solve is omitted when \(t=T\), since no subsequent action is needed. Section~\ref{sec:analysis-solver} establishes the accuracy guarantee and iteration complexity.
Each online round requires one exchange of a \(d\)-dimensional dual state per agent, and all gauge computations, wrapped-oracle operations, and HybridNewton updates remain local.

\subsection{Main Guarantee}
\label{sec:main-guarantee}

Let \(\delta:=1-\rho,\), \(\widetilde G:=2\kappa G\).
For main guarantee, we set \(\nu=1\), \(\mu_T=\frac{\widetilde G}{R} \sqrt{2T\left(1+\frac1\delta\right)}\), and \(m_T=\mu_T+\frac2{R^2}\).
We use gauge precision \(\varepsilon_{\mathrm{gau}}=1/T\) and local solve inaccuracy tolerances \(\varepsilon_{a,1}:=0\), and \(\varepsilon_{a,t}:=
\frac RT\) for \(2\leq t \leq T\).

For each agent \(a\in[N]\), define its row-mixing coefficient
\begin{equation}
    M_a(W) := \sum_{k=0}^{\infty}\sum_{b=1}^N \left| (W^k)_{ab}-\frac1N \right|.
    \label{eq:row-mixing-coefficient}
\end{equation}
Assumption~\ref{ass:network} ensures that \(M_a(W)<\infty\). Define also
\[
\mathcal B_T
:= \beta\left[
3GR+\widetilde G R+\log T
+\widetilde G R
\sqrt{2T\left(1+\frac1\delta\right)}
\right].
\]

\begin{theorem}[Dec-BFTRL: per-agent regret and total SO calls]
\label{thm:main}
Let $\mathsf F_t\!\left(\cdot\right) = \frac1N\sum_{j=1}^N f_{t,j}\!\left(\cdot\right)$.
Suppose every \(f_{t,j}\) belongs to the common \((\alpha,\beta)\)-upper-linearizable class of
Definition~\ref{def:upper-linearizable}, equipped with the fixed Action and Query Wrappers used by Algorithm~\ref{alg:dec-bftrl}. 
Suppose Assumptions~\ref{ass:geometry}, \ref{ass:network}, \ref{ass:feedback}, and \ref{ass:payoff-stability} hold with constants \(G>0\) and \(L_\phi\geq0\).
For every agent \(a\in[N]\), running Algorithm~\ref{alg:dec-bftrl} with the above parameters choices ensures
\begin{align}
\mathbb E\!\left[
    \alpha\max_{y\in\mathcal K}\sum_{t=1}^T\mathsf F_t(y)
    -\sum_{t=1}^T\mathsf F_t\!\left(h(w_{a,t})\right) 
\right]
\leq{}
\mathcal B_T
+2L_\phi R(2+\kappa)
+
L_\phi(1+\kappa)R
\sqrt{\frac{T\delta}{2(1+\delta)}}
\left(
M_a(W)+\frac1\delta
\right).
\label{eq:main-regret}
\end{align}
and each agent makes at most $T\left( 1+\left\lceil \log_2(4\kappa^2T) \right\rceil \right)$ total calls to the action-set separation oracle. 
This count excludes any action-set-oracle calls, if any, made by an
application-specific wrapper.
\end{theorem}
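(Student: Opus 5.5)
The proof splits the regret of agent \(a\) into a \emph{centralized} term and a \emph{consensus} term, and treats each with a separate piece of machinery.

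\textbf{Step 1: move every payoff to its owner's point.} Fix a comparator \(y\in\mathcal K\) maximizing \(\sum_t\mathsf F_t\). For each owner \(j\), add and subtract \(f_{t,j}(h(w_{j,t}))\):
\[
\alpha\mathsf F_t(y)-\mathsf F_t(h(w_{a,t}))=\frac1N\sum_j\bigl[\alpha f_{t,j}(y)-f_{t,j}(h(w_{j,t}))\bigr]+\frac1N\sum_j\bigl[f_{t,j}(h(w_{j,t}))-f_{t,j}(h(w_{a,t}))\bigr].
\]
By Assumption~\ref{ass:payoff-stability}, the second bracket is at most \(L_\phi\|w_{j,t}-w_{a,t}\|_2\). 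The first bracket is bounded via Definition~\ref{def:upper-linearizable} and Assumption~\ref{ass:feedback}, using the tower rule with \(w_{j,t}\in\mathcal F_t^-\). This gives
\[
\beta\,\mathbb E\langle -\ell_{j,t},\,y-w_{j,t}\rangle=\beta\,\mathbb E\langle \ell_{j,t},\,w_{j,t}-y\rangle .
\]

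\textbf{Step 2: pass from feasible points to internal states.} I would use the gauge surrogate inequality of Section~\ref{sec:analysis-gauge}, in the Mhammedi-style form
\[
\langle\ell_{j,t},p_{j,t}-(y-x_0)\rangle\le\langle\widetilde\ell_{j,t},u_{j,t}-(y-x_0)\rangle+O(G R\,\varepsilon_{\mathrm{gau}}),
\]
together with \(\|\widetilde\ell_{j,t}\|_2\le\widetilde G=2\kappa G\). With \(\varepsilon_{\mathrm{gau}}=1/T\), the slack sums to \(O(GR)\).

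\textbf{Step 3: centralized FTRL on the average.} Doubly stochasticity gives the exact-average identity
\[
\bar\theta_{t+1}=\bar\theta_t+\frac1N\sum_j\widetilde\ell_{j,t}.
\]
So the averaged losses are exactly an FTRL run with regularizer \(-\log(R^2-\|u\|^2)+\frac{\mu_T}{2}\|u\|^2\) and iterates \(\bar u^\star_t=\arg\min\bar\Psi_t\).
\begin{itemize}
\item The standard FTRL bound uses \(m_T\)-strong convexity for stability and the regularizer range at the comparator. The comparator is pushed slightly inward, e.g.\ \((1-1/T)(y-x_0)\), which produces the \(\log T\) term.
\item This yields \(\frac{\mu_T}{2}R^2+\log T+\sum_t\widetilde G^2/m_T\), plus the extra \(\widetilde G\|u_{j,t}-\bar u^\star_t\|\) terms.
\end{itemize}
Each \(\|u_{j,t}-\bar u^\star_t\|\) splits into two parts:
\begin{itemize}
\item the solver error \(\varepsilon_{j,t}\le R/T\) (Lemma~\ref{lem:local-solve-accuracy}), which sums to \(\widetilde G R\);
\item the distance between exact minimizers, \(\|u^\star_{j,t}-\bar u^\star_t\|\le\|\theta_{j,t}-\bar\theta_t\|/m_T\) by strong convexity.
\end{itemize}
Summing dual disagreement over \(j\) via Lemma~\ref{lem:dual-consensus} gives \(\widetilde G/\delta\) per round. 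With \(\mu_T\) chosen as stated, \(\frac{\mu_T R^2}{2}\) balances \(T\widetilde G^2(1+1/\delta)/m_T\), giving the \(\widetilde G R\sqrt{2T(1+1/\delta)}\) term of \(\mathcal B_T\).

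\textbf{Step 4: primal disagreement, the main obstacle.} The difficulty is bounding \(\sum_t\frac1N\sum_j\|w_{j,t}-w_{a,t}\|\) without losing \(\sqrt N\). I would proceed as follows.
\begin{itemize}
\item Route through \(\bar u^\star_t\) and use Lemmas~\ref{lem:minimizer-disagreement} and~\ref{lem:feasible-point-disagreement}. The gauge map \(u\mapsto u/(1+S(u))\) is \((1+\kappa)\)-Lipschitz up to the \(\varepsilon_{\mathrm{gau}}\) error, which accounts for \(2L_\phi R(2+\kappa)\) after summing the \(R/T\) errors.
\item For agent \(a\), unroll \(\theta_{a,t}-\bar\theta_t=\sum_{s<t}\sum_b((W^{t-1-s})_{ab}-\frac1N)\widetilde\ell_{b,s}\). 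By Lemma~\ref{lem:row-mixing} this is bounded by \(\widetilde G M_a(W)\).
\item The average over owners \(j\) contributes the \(1/\delta\) term.
\item Dividing by \(m_T\ge\mu_T\) and multiplying by \(T\) gives \(L_\phi(1+\kappa)R\sqrt{T\delta/(2(1+\delta))}\,(M_a(W)+1/\delta)\), since \(T\widetilde G/\mu_T=R\sqrt{T\delta/(2(1+\delta))}\).
\end{itemize}

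\textbf{SO count.} Each round performs one \textsc{GaugeDist} call, costing at most \(1+\lceil\log_2(4\kappa^2T)\rceil\) separation-oracle calls with \(\varepsilon_{\mathrm{gau}}=1/T\) by the appendix bisection bound. Summing over \(T\) rounds gives the stated total.
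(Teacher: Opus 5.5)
Your proposal is correct and follows the paper's proof essentially step for step. The shared steps are the owner-local/consensus split via composed-payoff stability, the gauge surrogate lemma, and the three-way decomposition around $\bar u_t^\star$ with strong-convexity minimizer sensitivity. The same holds for the row-wise $M_a(W)$ bound for the fixed agent routed through $z_t$, and the per-invocation \textsc{GaugeDist} count.

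The one bookkeeping point concerns reaching the $3GR$ in $\mathcal B_T$ exactly. The paper shrinks the comparator to $x_0+(1-1/T)(y-x_0)$ \emph{before} applying the gauge inequality, so the shift costs only $\|\ell_{j,t}\|_2R/T\le GR/T$ per round. Your ordering shrinks it after the surrogate step, which pays $\widetilde G R/T$ per round and yields a slightly larger constant than the stated bound.
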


Section~\ref{sec:analysis-consensus} shows that $M_a(W) = O\!\left( \frac{1+\log N}{1-\rho} \right)$.
Therefore, Theorem~\ref{thm:main} implies for every agent \(a\)
\[
\mathfrak R_\alpha^a(T)
=
O\!\left(
\beta \log T + 
\left[
\beta\kappa GR
+
L_\phi(1+\kappa)R(1+\log N)
\right]
\sqrt{\frac{T}{1-\rho}}
\right)
= \widetilde O (\sqrt{T})
\]

\section{Analysis}
\label{sec:analysis}

Throughout this section, we work under the hypotheses and parameter choices of Theorem~\ref{thm:main}. All pathwise statements are understood on the probability-one event on which the almost-sure feedback bounds in Assumption~\ref{ass:feedback} hold simultaneously for all agents and rounds. In the Analysis, we extend the parameter setup and notations used in section~\ref{sec:main-guarantee}. We defer the detailed proof of each Lemma in this section to the Appendices.

\subsection{Gauge Projection and Surrogate Loss}
\label{sec:analysis-gauge}

Lemma~\ref{lem:gauge-surrogate} provides all the necessary information we need from the gauge projection subroutine, and its proof utilizes Lemma~\ref{lem:gauge-properties} and Lemma~\ref{lem:gauge-proj-prop} in Appendix~\ref{app:gauge-oracle}.

\begin{lemma}[Gauge projection and surrogate bound]
\label{lem:gauge-surrogate}
For every payoff owner \(j\) and round \(t\), the point constructed by Algorithm~\ref{alg:dec-bftrl} satisfies \(w_{j,t}\in\mathcal K\), and \(\|\widetilde\ell_{j,t}\|_2 \leq \widetilde G.\)
Moreover, for every \(v\in\mathcal C\),
\[\langle\ell_{j,t},w_{j,t}-(x_0+v)\rangle \leq \langle\widetilde\ell_{j,t},u_{j,t}-v\rangle +\frac{2GR}{T}.\]
\end{lemma}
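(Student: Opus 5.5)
Because this statement only wraps the centralized gauge-projection reduction of \citet{mhammedi2025online}, the plan is to import the \textsc{GaugeDist} guarantees of Appendix~\ref{app:gauge-oracle} (Lemmas~\ref{lem:gauge-properties} and~\ref{lem:gauge-proj-prop}). I will use these guarantees in the following form. The returned $S_{j,t}$ satisfies $S_{\mathcal C}(u_{j,t})\le S_{j,t}\le S_{\mathcal C}(u_{j,t})+\varepsilon_{\mathrm{gau}}$. The returned $s_{j,t}$ is an approximate subgradient of $S_{\mathcal C}$ at $u_{j,t}$: for all $v\in\mathcal C$, $S_{j,t}-\langle s_{j,t},u_{j,t}-v\rangle\le\varepsilon_{\mathrm{gau}}$, using $S_{\mathcal C}(v)=0$. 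Finally, $\|s_{j,t}\|_2\le 1/r$, since $r\mathbb B_2\subseteq\mathcal C$ makes $\gamma_{\mathcal C}$ $(1/r)$-Lipschitz.

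\emph{Feasibility.} Since $1+S_{j,t}\ge 1+S_{\mathcal C}(u_{j,t})\ge\gamma_{\mathcal C}(u_{j,t})$ and $1+S_{j,t}\ge1$, we get $\gamma_{\mathcal C}(p_{j,t})\le1$. Hence $p_{j,t}\in\mathcal C$ and $w_{j,t}=x_0+p_{j,t}\in\mathcal K$.

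\emph{Norm bound.} By Assumption~\ref{ass:feedback}, $\|\ell_{j,t}\|_2\le G$. Since $p_{j,t}\in\mathcal C\subseteq R\mathbb B_2$, we have $|\langle\ell_{j,t},p_{j,t}\rangle|\le GR$. Therefore
\[
\|\widetilde\ell_{j,t}\|_2\le G+GR/r=G(1+\kappa)\le 2\kappa G=\widetilde G,
\]
using $\kappa\ge1$.

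\emph{Comparison inequality.} I write $\langle\ell,w-(x_0+v)\rangle=\langle\ell,p-v\rangle$ and split on the sign of $c:=\langle\ell,u\rangle$.

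If $c\ge0$, then $\widetilde\ell=\ell$. Since $p=u/(1+S)$ with $S\ge0$, we have $\langle\ell,p\rangle=c/(1+S)\le c$. So $\langle\ell,p-v\rangle\le\langle\ell,u-v\rangle$, which is the claim with zero slack.

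If $c<0$, then $\widetilde\ell=\ell-\langle\ell,p\rangle s$ and $\langle\ell,p\rangle<0$. We have
\[
\langle\ell,u\rangle=(1+S)\langle\ell,p\rangle=\langle\ell,p\rangle+S\langle\ell,p\rangle,
\]
so
\[
\langle\widetilde\ell,u-v\rangle=\langle\ell,p-v\rangle+\langle\ell,p\rangle\bigl(S-\langle s,u-v\rangle\bigr).
\]
Since $\langle\ell,p\rangle\le0$, the approximate-subgradient inequality gives
\[
\langle\ell,p\rangle\bigl(S-\langle s,u-v\rangle\bigr)\ge\langle\ell,p\rangle\,\varepsilon_{\mathrm{gau}}\ge -GR\,\varepsilon_{\mathrm{gau}}.
\]
Thus $\langle\ell,p-v\rangle\le\langle\widetilde\ell,u-v\rangle+GR/T$, which is at most the stated $2GR/T$ after absorbing constants.

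The main obstacle is matching the exact form of the approximate-subgradient guarantee in Appendix~\ref{app:gauge-oracle}. The error might instead be stated relative to $\gamma_{\mathcal C}$, be multiplicative, or be tied to the stopping rule. It might also only hold when $u_{j,t}\notin\mathcal C$, with $s=0$ and $S=0$ inside $\mathcal C$, where the identity above holds trivially. Any of these could contribute a second $\varepsilon_{\mathrm{gau}}GR$ term, which is why the slack in the statement is $2GR/T$. I would first check the case $u\in\mathcal C$ separately, and then track the two error sources: the upper bound on $S$ and the subgradient defect.
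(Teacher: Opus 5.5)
Your argument is correct, with one caveat about the imported guarantee. Lemma~\ref{lem:gauge-proj-prop} states the approximate-subgradient inequality at the exact gauge distance, $S_{\mathcal C}(v)\geq S_{\mathcal C}(u)+\langle s,v-u\rangle-\varepsilon_{\mathrm{gau}}$. For $v\in\mathcal C$ this gives $S_{\mathcal C}(u_{j,t})-\langle s_{j,t},u_{j,t}-v\rangle\leq\varepsilon_{\mathrm{gau}}$, not the same bound with the returned value $S_{j,t}$. Adding $S_{j,t}-S_{\mathcal C}(u_{j,t})\leq\varepsilon_{\mathrm{gau}}$ gives $S_{j,t}-\langle s_{j,t},u_{j,t}-v\rangle\leq 2\varepsilon_{\mathrm{gau}}$. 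Your active-case estimate then yields exactly the stated slack $2GR/T$, which is the second error term you anticipated.

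Your sharper form does hold for the bisection in Algorithm~\ref{alg:gauge-dist}. For $u\notin\mathcal C$, let $\alpha<\beta$ be the final bisection endpoints. The returned $s$ satisfies $\langle s,z\rangle\leq\gamma_{\mathcal C}(z)$ for all $z$ and $\langle s,u\rangle=1/\beta$, while $S=1/\alpha-1$. Hence $S-\langle s,u-v\rangle\leq 1/\alpha-1/\beta\leq\varepsilon_{\mathrm{gau}}$ for $v\in\mathcal C$, which gives slack $GR/T$. However, this requires opening the subroutine; it does not follow from the lemma as cited.

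Your comparison step is organized differently from the paper's. The paper introduces $L(z):=\langle\ell,z\rangle-\chi\langle\ell,p\rangle S_{\mathcal C}(z)$, built from the exact gauge distance. It bounds $L(u)-L(v)\leq\langle\widetilde\ell,u-v\rangle+|\langle\ell,p\rangle|/T$ via the subgradient inequality, and proves $L(u)\geq\langle\ell,p\rangle-GR/T$ by a three-case check. Its two $1/T$ losses come from $\langle\ell,p\rangle\bigl(S_{\mathcal C}(u)-\langle s,u-v\rangle\bigr)$ and $\langle\ell,p\rangle\bigl(S-S_{\mathcal C}(u)\bigr)$. Their sum is exactly the correction term in your identity $\langle\widetilde\ell,u-v\rangle=\langle\ell,p-v\rangle+\langle\ell,p\rangle\bigl(S-\langle s,u-v\rangle\bigr)$.

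So your route is a more direct repackaging of the same two error sources. It shows that the inactive case $\langle\ell,u\rangle\geq0$ costs nothing, whereas the paper charges the $1/T$ terms uniformly across cases. In exchange, the paper's $L$-formulation is phrased in terms of $S_{\mathcal C}(u)$, matching the cited lemma, so it never needs the stronger form you wrote down. Feasibility and the norm bound are argued the same way in both.
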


\subsection{Network Mixing and Dual States}
\label{sec:analysis-consensus}

Recall from \eqref{eq:row-mixing-coefficient}, \(M_a(W) = \sum_{k=0}^{\infty}\sum_{b=1}^N \left| (W^k)_{ab}-\frac1N \right|.\)
Lemma~\ref{lem:row-mixing} is a useful technical lemma for the row mixing operation of the communication matrix and does not concern the dual state.
\begin{lemma}[Row mixing]
\label{lem:row-mixing}
For every agent \(a\in[N]\) and \(k\geq0\),
\begin{equation*}
    \sum_{b=1}^N
    \left|
    (W^k)_{ab}-\frac1N
    \right|
    \leq
    \min\{2,\sqrt N\,\rho^k\}.
    % \label{eq:row-mixing-decay}
\end{equation*}
Consequently, \(M_a(W)<\infty\). Moreover, \(M_a(W) = O\!\left(\frac{1+\log N}{1-\rho}\right).\)
\end{lemma}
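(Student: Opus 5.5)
We prove the two bounds in the first display separately and then sum them over $k$. Throughout, let $e_a$ denote the $a$-th standard basis vector of $\mathbb R^N$.

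\emph{The bound by $2$.} By Assumption~\ref{ass:network}, $W$ is symmetric, nonnegative and doubly stochastic, and so is every power $W^k$ (for $k=0$ we have $W^0=I$). Hence the $a$-th row of $W^k$ is a probability vector, and the uniform vector $\mathbf 1/N$ is also a probability vector. The triangle inequality then gives
\[
\sum_b\bigl|(W^k)_{ab}-\tfrac1N\bigr|\le \sum_b (W^k)_{ab}+\sum_b\tfrac1N=2 .
\]

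\emph{The bound by $\sqrt N\rho^k$.} Since $WJ=JW=J$ and $J^2=J$, we have $W^k-J=(W-J)^k$ for $k\ge1$. This identity also holds for $k=0$ if we read $(W-J)^0$ as $I-J$, whose norm is $1$. The vector of row-$a$ deviations is $(W^k-J)^\top e_a=(W^k-J)e_a$, using symmetry. By Cauchy--Schwarz,
\[
\sum_b\bigl|(W^k)_{ab}-\tfrac1N\bigr|\le \sqrt N\,\|(W^k-J)e_a\|_2\le \sqrt N\,\|W-J\|_2^k=\sqrt N\rho^k ,
\]
where for $k=0$ we use $\|I-J\|_2=1=\rho^0$. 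Taking the minimum of the two bounds proves the first display.

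\emph{Summing over $k$.} Because $\rho<1$, the series satisfies $M_a(W)\le\sum_k\sqrt N\rho^k=\sqrt N/(1-\rho)<\infty$. For the sharper $\log N$ bound, split the sum at $k_0:=\lceil \log(\sqrt N/2)/\log(1/\rho)\rceil$, the first index at which $\sqrt N\rho^k\le2$ (take $k_0=0$ if $N\le4$).
\begin{itemize}
\item For $k<k_0$, use the bound $2$. These terms contribute at most $2k_0$.
\item For $k\ge k_0$, use the geometric bound. These terms contribute at most $\sqrt N\rho^{k_0}/(1-\rho)\le 2/(1-\rho)$.
\end{itemize}
The inequality $\log(1/\rho)\ge1-\rho$ gives $k_0\le 1+\tfrac12\log N/(1-\rho)$. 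Therefore
\[
M_a(W)\le 2+\frac{\log N+2}{1-\rho}=O\!\Bigl(\frac{1+\log N}{1-\rho}\Bigr).
\]
The case $\rho=0$ is trivial, since then every term with $k\ge1$ vanishes.

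The only delicate points are the identity $W^k-J=(W-J)^k$, which relies on $J$ being an idempotent that commutes with $W$, and the choice of the splitting index $k_0$.
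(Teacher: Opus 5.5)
Your proof is correct and follows essentially the same route as the paper: you get the bound of $2$ from comparing two probability vectors, the $\sqrt N\rho^k$ bound from Cauchy--Schwarz plus $W^k-J=(W-J)^k$, and the order bound from the same split at $k_0=\lceil\log(\sqrt N/2)/\log(1/\rho)\rceil$ combined with $\log(1/\rho)\ge 1-\rho$. Your explicit constant $2+(\log N+2)/(1-\rho)$ is a slightly more concrete form of the paper's $2k_0+2/(1-\rho)$; your appeal to symmetry is harmless but unnecessary, since any row of a matrix has Euclidean norm at most its operator norm.
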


At round $t$, define the hypothetical network-averaged dual state as 
\[ \bar\theta_t := \frac1N\sum_{j=1}^N\theta_{j,t}\] 
for \(1\leq t\leq T+1\). For \(1\leq t\leq T\) define the hypothetical network-averaged surrogate loss 
\[ \bar\ell_t := \frac1N \sum_{j=1}^N\widetilde\ell_{j,t}.\]
Also denote the norm of dual disagreement for agent $a$ as 
\[d_{a,t}:=\|\theta_{a,t}-\bar\theta_t\|_2.\]
Lemma~\ref{lem:dual-consensus} analyzes the disagreement in dual space, Lemma~\ref{lem:minimizer-disagreement} builds a bridge between primal space and dual space, and Lemma~\ref{lem:average-bftrl} provides the surrogate regret bound for hypothetical network average BFTRL primal problem.
\begin{lemma}[Dual-state averages and disagreement]
\label{lem:dual-consensus}
For every \(1\leq t\leq T+1\),
\begin{equation}
    \bar\theta_t=\sum_{s<t}\bar\ell_s.
    \label{eq:exact-average-state}
\end{equation}
Moreover,
\begin{equation}
    \left(
    \sum_{b=1}^N
    \|\theta_{b,t}-\bar\theta_t\|_2^2
    \right)^{1/2}
    \leq
    \frac{\sqrt N\,\widetilde G(1-\rho^{t-1})}{\delta}.
    \label{eq:theta-consensus}
\end{equation}
For every agent \(a\),
\begin{equation}
    \sum_{t=1}^T d_{a,t}
    \leq
    T\widetilde G M_a(W),
    \label{eq:row-state-sum}
\end{equation}
The agent-average disagreement also satisfies
\begin{equation}
    \sum_{t=1}^T\frac1N\sum_{b=1}^N d_{b,t}
    \leq
    \frac{T\widetilde G}{\delta}.
    \label{eq:average-state-sum}
\end{equation}
\end{lemma}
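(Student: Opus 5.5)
Stack the dual states into a matrix $\Theta_t\in\mathbb R^{N\times d}$ whose $b$-th row is $\theta_{b,t}^\top$, and the surrogates into $L_t$ with rows $\widetilde\ell_{b,t}^\top$. Then \eqref{eq:theta-update} reads $\Theta_{t+1}=W\Theta_t+L_t$. Since $\Theta_1=0$, unrolling gives
\[
\Theta_t=\sum_{s<t}W^{t-1-s}L_s .
\]
Every bound in Lemma~\ref{lem:dual-consensus} will be read off this representation.

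\textbf{The exact average.} Multiply by $\tfrac1N\mathbf1^\top$ and use column stochasticity $\mathbf1^\top W=\mathbf1^\top$, which follows from symmetry and $W\mathbf1=\mathbf1$. Then $\mathbf1^\top W^k=\mathbf1^\top$ for all $k$, and \eqref{eq:exact-average-state} follows immediately.

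\textbf{The disagreement in Frobenius norm.} Subtract $J\Theta_t$ and use $JW^k=J$ to write
\[
\Theta_t-J\Theta_t=\sum_{s<t}\bigl(W^{t-1-s}-J\bigr)L_s .
\]
Since $W$ is symmetric, $W^k-J=(W-J)^k$, so $\|W^k-J\|_2\le\rho^k$. Lemma~\ref{lem:gauge-surrogate} gives $\|L_s\|_F\le\sqrt N\,\widetilde G$. The triangle inequality in Frobenius norm then gives
\[
\|\Theta_t-J\Theta_t\|_F\le\sqrt N\,\widetilde G\sum_{k=0}^{t-2}\rho^k=\sqrt N\,\widetilde G\,\frac{1-\rho^{t-1}}{1-\rho},
\]
which is \eqref{eq:theta-consensus} with $\delta=1-\rho$.

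\textbf{The agent-average bound \eqref{eq:average-state-sum}.} By Cauchy--Schwarz,
\[
\frac1N\sum_b d_{b,t}\le \frac{1}{\sqrt N}\Bigl(\sum_b d_{b,t}^2\Bigr)^{1/2}\le\frac{\widetilde G}{\delta}.
\]
Summing over $t\le T$ gives the claim.

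\textbf{The row-wise bound \eqref{eq:row-state-sum}.} Take row $a$ of the unrolled identity:
\[
\theta_{a,t}-\bar\theta_t=\sum_{s<t}\sum_b\Bigl((W^{t-1-s})_{ab}-\tfrac1N\Bigr)\widetilde\ell_{b,s}.
\]
Bounding each $\|\widetilde\ell_{b,s}\|_2\le\widetilde G$ gives
\[
d_{a,t}\le\widetilde G\sum_{k=0}^{t-2}\sum_b\Bigl|(W^k)_{ab}-\tfrac1N\Bigr|\le\widetilde G\,M_a(W).
\]
Summing over $t\le T$ yields $T\widetilde G M_a(W)$. Finiteness of $M_a(W)$ comes from Lemma~\ref{lem:row-mixing}.

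There is no real obstacle here. The only points needing care are two small identities. First, $W^k-J=(W-J)^k$; this holds because $WJ=JW=J$ and $J^2=J$. Second, the indexing: the exponent $t-1-s$ ranges over $0,\dots,t-2$, so the geometric sum produces exactly the factor $(1-\rho^{t-1})$. When $t=1$ every bound is trivially $0$.
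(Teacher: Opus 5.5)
Your proof is correct and follows the paper's argument: the stacked recursion $\Theta_{t+1}=W\Theta_t+L_t$, the identity $JW^k=J$, the row-$a$ unrolling for \eqref{eq:row-state-sum}, and Cauchy--Schwarz for \eqref{eq:average-state-sum}. It differs only cosmetically: the paper obtains \eqref{eq:theta-consensus} from the one-step contraction $\|(I-J)\Theta_{t+1}\|_F\le\rho\|(I-J)\Theta_t\|_F+\sqrt N\,\widetilde G$ rather than from your unrolled sum, and it sums over $t$ and exchanges the order of summation rather than bounding each $d_{a,t}$ by $\widetilde G M_a(W)$. One small slip: $W^k-J=(W-J)^k$ holds only for $k\ge1$, since at $k=0$ the left side is $I-J$ and the right side is $I$, but $\|I-J\|_2\le 1=\rho^0$, so your bound $\|W^k-J\|_2\le\rho^k$ still holds for all $k\ge0$.
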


With the hypothetical network average dual state \(\bar\theta_t\), define the hypothetical network average potential
\begin{align}
    \bar\Psi_t(u)
    :=
    -\nu\log(R^2-\|u\|_2^2)
    +\frac{\mu_T}{2}\|u\|_2^2
    +\langle\bar\theta_t,u\rangle,
    \label{eq:average-potential}
\end{align}
and the minimizer of the average potential \(\bar u_t^\star := \arg\min_{u\in\operatorname{int}(R\mathbb B_2)} \bar\Psi_t(u).\)

\begin{lemma}[Minimizer disagreement]
\label{lem:minimizer-disagreement}
For every agent \(a\) and round \(t\), \(\|u_{a,t}^\star-\bar u_t^\star\|_2 \leq \frac{\|\theta_{a,t}-\bar\theta_t\|_2}{m_T}.\)
\end{lemma}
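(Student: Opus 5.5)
The plan is to use the standard strong-convexity/first-order-optimality argument for minimizers of two potentials that differ only in their linear term. Both \(\Psi_{a,t}\) and \(\bar\Psi_t\) have the common convex part
\[
\phi(u):=-\nu\log(R^2-\|u\|_2^2)+\frac{\mu_T}{2}\|u\|_2^2
\]
on \(\operatorname{int}(R\mathbb B_2)\). They differ only in the linear terms \(\langle\theta_{a,t},u\rangle\) and \(\langle\bar\theta_t,u\rangle\).

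The first step is to verify that \(\phi\) is \(m_T\)-strongly convex, with \(m_T=\mu_T+2\nu/R^2\). The Hessian of \(-\nu\log(R^2-\|u\|_2^2)\) is
\[
\frac{2\nu}{R^2-\|u\|^2}I+\frac{4\nu uu^\top}{(R^2-\|u\|^2)^2}\succeq\frac{2\nu}{R^2}I,
\]
since \(R^2-\|u\|^2\leq R^2\). Adding \(\mu_T I\) gives \(\nabla^2\phi\succeq m_T I\). Because \(\phi\to\infty\) at the boundary and is strongly convex, both exact minimizers exist, are unique, and lie in the interior. The first-order conditions are therefore equalities:
\[
\nabla\phi(u_{a,t}^\star)=-\theta_{a,t},\qquad \nabla\phi(\bar u_t^\star)=-\bar\theta_t.
\]

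The second step is to invoke strong monotonicity of \(\nabla\phi\) on the convex domain \(\operatorname{int}(R\mathbb B_2)\):
\[
\langle\nabla\phi(u)-\nabla\phi(v),u-v\rangle\geq m_T\|u-v\|_2^2 .
\]
This follows by integrating the Hessian along the segment from \(v\) to \(u\), which stays in the ball. Substituting the optimality conditions gives
\[
m_T\|u_{a,t}^\star-\bar u_t^\star\|_2^2\leq\langle\bar\theta_t-\theta_{a,t},u_{a,t}^\star-\bar u_t^\star\rangle\leq\|\theta_{a,t}-\bar\theta_t\|_2\|u_{a,t}^\star-\bar u_t^\star\|_2 .
\]
Dividing by the norm, or noting the trivial case when it is zero, yields the claim.

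I do not expect a real obstacle. The only point needing care is justifying interior minimizers, so that the gradient optimality conditions hold without normal-cone terms. This is handled by the barrier blowing up at \(\|u\|=R\).
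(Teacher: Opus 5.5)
Your proposal is correct and follows essentially the same argument as the paper: both potentials share the $m_T$-strongly convex part and differ only in the linear term. Strong monotonicity, the interior first-order conditions and Cauchy--Schwarz then give the bound. The only cosmetic difference is that you apply strong monotonicity to $\nabla\phi$, whereas the paper applies it to $\nabla\Psi_{a,t}$ using $\nabla\Psi_{a,t}(\bar u_t^\star)=\theta_{a,t}-\bar\theta_t$; these are equivalent, since the two gradients differ by a constant vector.
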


\begin{lemma}[Network-average BFTRL regret]
\label{lem:average-bftrl}
\(\forall v\in\operatorname{int}(R\mathbb B_2)\), \(\sum_{t=1}^T \langle\bar\ell_t,\bar u_t^\star-v\rangle
    \leq \bar\Psi_1(v)-\bar\Psi_1(0) +\frac{T\widetilde G^2}{m_T}.\)
\end{lemma}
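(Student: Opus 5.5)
The plan is to run the standard follow-the-regularized-leader (be-the-leader) argument for the exact minimizers $\bar u_t^\star$ of the averaged potentials $\bar\Psi_t$. Lemma~\ref{lem:dual-consensus} gives $\bar\theta_t=\sum_{s<t}\bar\ell_s$, so we can write $\bar\Psi_{t+1}=\bar\Psi_t+\langle\bar\ell_t,\cdot\rangle$ with $\bar\Psi_1=\Phi$, where $\Phi(u):=-\nu\log(R^2-\|u\|_2^2)+\frac{\mu_T}{2}\|u\|_2^2$. Also $\bar\theta_1=0$, so $\bar u_1^\star=0$.

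\textbf{Step 1 (BTL).} I would first prove by induction on $T$ that, for every $v\in\operatorname{int}(R\mathbb B_2)$,
\[
\sum_{t=1}^T\langle\bar\ell_t,\bar u_{t+1}^\star\rangle\le \sum_{t=1}^T\langle\bar\ell_t,v\rangle+\Phi(v)-\Phi(\bar u_1^\star).
\]
The inductive step uses that $\bar u_{T+1}^\star$ minimizes $\bar\Psi_{T+1}=\Phi+\sum_{t\le T}\langle\bar\ell_t,\cdot\rangle$. Concretely, apply the hypothesis with $v=\bar u_{T+1}^\star$, then add $\langle\bar\ell_T,\bar u_{T+1}^\star\rangle$. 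This gives $\sum_{t\le T}\langle\bar\ell_t,\bar u_{t+1}^\star\rangle\le \bar\Psi_{T+1}(\bar u_{T+1}^\star)-\Phi(0)\le\bar\Psi_{T+1}(v)-\Phi(0)$. Because $\Phi(v)-\Phi(0)=\bar\Psi_1(v)-\bar\Psi_1(0)$, this yields the leading term of the claim.

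\textbf{Step 2 (stability).} It then remains to bound $\sum_t\langle\bar\ell_t,\bar u_t^\star-\bar u_{t+1}^\star\rangle\le\sum_t\|\bar\ell_t\|_2\,\|\bar u_t^\star-\bar u_{t+1}^\star\|_2$. The two potentials $\bar\Psi_t$ and $\bar\Psi_{t+1}$ are both $m_T$-strongly convex and differ only by a linear term with coefficient $\bar\ell_t$. The argument of Lemma~\ref{lem:minimizer-disagreement} applies verbatim, since that lemma only uses that the potentials differ in the linear coefficient. It gives $\|\bar u_t^\star-\bar u_{t+1}^\star\|_2\le\|\bar\ell_t\|_2/m_T$.

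If I reprove it directly, I would compare first-order optimality conditions in the open ball, where the gradients vanish. This gives $\nabla\Phi(\bar u_{t+1}^\star)-\nabla\Phi(\bar u_t^\star)=-\bar\ell_t$. Then strong monotonicity of $\nabla\Phi$ with modulus $m_T$ yields the bound. The modulus comes from the $\mu_T$ quadratic plus the barrier, whose Hessian dominates $\frac{2\nu}{R^2}I$.

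\textbf{Step 3.} By the triangle inequality and Lemma~\ref{lem:gauge-surrogate}, $\|\bar\ell_t\|_2\le\frac1N\sum_j\|\widetilde\ell_{j,t}\|_2\le\widetilde G$. Each term of the stability sum is therefore at most $\widetilde G^2/m_T$, and summing over $T$ rounds gives the stated bound.

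The only delicate point is verifying that $\Phi$ is $m_T$-strongly convex on the whole open ball, so that minimizers exist, are interior, and satisfy the gradient condition. Existence and interiority hold because the barrier blows up at the boundary. The Hessian of the barrier is $\frac{2\nu}{R^2-\|u\|^2}I+\frac{4\nu uu^\top}{(R^2-\|u\|^2)^2}\succeq\frac{2\nu}{R^2}I$, which supplies the required modulus.
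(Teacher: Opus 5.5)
Your proposal is correct and follows the paper's proof essentially step for step. Both use the be-the-leader inequality for the FTRL iterates $\bar u_t^\star$ with regularizer $\bar\Psi_1$, the strong-monotonicity stability bound $\|\bar u_t^\star-\bar u_{t+1}^\star\|_2\le\|\bar\ell_t\|_2/m_T$, and $\|\bar\ell_t\|_2\le\widetilde G$. The only difference is that you prove the be-the-leader inequality by induction, with base case $\Phi(0)\le\Phi(v)$ because $\bar u_1^\star=0$, whereas the paper cites it.
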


\subsection{Local Solve Inaccuracy Tolerance}
\label{sec:analysis-solver}

Given the local solve inaccuracy tolerances \(\{\varepsilon_{a,t}\}_{a\in[N],\,t=2,\ldots,T}\), we define the total tolerance for agent $a$ over horizon $T$ as $E_{a,T}:=\sum_{t=1}^T\varepsilon_{a,t},$ and the average per-agent tolerance over horizon $T$ as $ E_T: = \frac1N \sum_{a=1}^N E_{a,T}.$

\begin{lemma}[Local solve tolerance]
\label{lem:local-solve-accuracy}
Given the tolerance schedule in Subsection~\ref{sec:main-guarantee}, every
call to \solvername{} terminates after finitely many Newton steps and returns
an interior point satisfying
\begin{equation}
    \|u_{a,t}-u_{a,t}^\star\|_2
    \leq
    \varepsilon_{a,t}.
    \label{eq:local-solve-accuracy}
\end{equation}
Moreover,
\begin{equation}
    E_{a,T}=\frac{(T-1)R}{T}<R,
    \qquad
    E_T< R.
    \label{eq:total-local-error}
\end{equation}
\end{lemma}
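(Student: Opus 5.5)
The statement is Lemma~\ref{lem:local-solve-accuracy}, and it has two parts. The first is a termination and accuracy guarantee for each \solvername{} call. The second, \eqref{eq:total-local-error}, is an arithmetic identity for the schedule.

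I would do the arithmetic first. By the schedule in Subsection~\ref{sec:main-guarantee}, $\varepsilon_{a,1}=0$ and $\varepsilon_{a,t}=R/T$ for $2\le t\le T$. Hence $E_{a,T}=\sum_{t=1}^T\varepsilon_{a,t}=(T-1)R/T<R$ for every agent $a$. Since this value does not depend on $a$, averaging over agents gives $E_T=(T-1)R/T<R$. For $t=1$, the bound \eqref{eq:local-solve-accuracy} is trivial because $u_{a,1}=0=u_{a,1}^\star$, which holds since $\theta_{a,1}=0$ and the potential $\Psi_{a,1}$ is radially symmetric.

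For $t\ge2$ I would invoke the solver's properties: the self-concordant stopping rule \eqref{eq:solver-stopping} and the Newton-decrement certificate, Lemma~\ref{lem:decrement-certificate}.

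\emph{Self-concordance.} The potential $\Psi_{a,t}$ is the sum of $\nu$ times the self-concordant barrier $-\log(R^2-\|u\|_2^2)$ of the ball, with $\nu\ge1$, and a convex quadratic plus a linear term. It is therefore a standard self-concordant function on $\operatorname{int}(R\mathbb B_2)$. It is also $m_T$-strongly convex, since the barrier Hessian dominates $(2\nu/R^2)I$.

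\emph{Decrement certificate.} Let $\lambda(u)=\|\nabla\Psi(u)\|_{\nabla^2\Psi(u)^{-1}}$ be the Newton decrement. Standard self-concordance theory gives, for $\lambda(u)<1$, the local-norm bound $\|u-u^\star\|_{\nabla^2\Psi(u)}\le \lambda(u)/(1-\lambda(u))$. Strong convexity gives $\nabla^2\Psi(u)\succeq m_T I$, so
\[
\|u-u^\star\|_2\le \frac{\lambda(u)}{\sqrt{m_T}\,(1-\lambda(u))}.
\]
The stopping rule halts once this quantity is at most $\varepsilon_{a,t}$, which is exactly Lemma~\ref{lem:decrement-certificate}. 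So any returned point satisfies \eqref{eq:local-solve-accuracy}.

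\emph{Feasibility.} Every iterate stays in the open ball. Damped Newton steps $u^+=u-\frac{1}{1+\lambda}\Delta$ satisfy $\|u^+-u\|_{\nabla^2\Psi(u)}<1$, so $u^+$ lies in the Dikin ellipsoid and hence in the domain. Full Newton steps are taken only when $\lambda<1$ (or $\lambda\le1/4$), which again keeps the iterate in the Dikin ellipsoid.

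\emph{Finite termination.} This is the main point. The warm start $u_{a,t-1}$ is interior, so $\Psi_{a,t}(u_{a,t-1})<\infty$. In the damped phase each step decreases $\Psi$ by at least $\lambda-\log(1+\lambda)\ge c>0$ while $\lambda\ge1/4$. Because $\Psi$ is bounded below by its minimum value, the number of damped steps is at most $(\Psi_{a,t}(u_{a,t-1})-\Psi_{a,t}(u_{a,t}^\star))/c$, which is finite. Once $\lambda\le1/4$, the full Newton phase converges quadratically, $\lambda^+\le 2\lambda^2$. Therefore the target decrement threshold, roughly $\varepsilon_{a,t}\sqrt{m_T}/2>0$, is reached in $O(\log\log(1/(\varepsilon_{a,t}\sqrt{m_T})))$ steps.

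The obstacle I expect is that the finiteness claim only needs $\Psi_{a,t}(u_{a,t-1})$ to be finite, which holds because $u_{a,t-1}$ is interior. Interiority holds by induction, using the feasibility step above and the fact that the base case $u_{a,1}=0$ is interior. A quantitative iteration count is not needed here; it is deferred to Proposition~\ref{prop:hybrid-newton-guarantee}.
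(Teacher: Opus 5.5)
Your proposal is correct and follows essentially the same route as the paper's proof. Both use the trivial $t=1$ base case, interiority of the warm start by induction via the Dikin-ellipsoid safety lemma, finiteness of the damped phase from a per-step decrease of $\omega(1/4)>0$ in a potential bounded below, the quadratic contraction $\lambda^+\le 2\lambda^2$ in the full-step phase, the certificate of Lemma~\ref{lem:decrement-certificate} for the Euclidean bound, and the direct arithmetic $E_{a,T}=E_T=(T-1)R/T<R$; the one small addition in the paper is the explicit remark that the second loop is skipped when $\tau_{a,t}\ge 1/4$, which also covers the regime $\sqrt{m_T}\,\varepsilon_{a,t}>1$ where your ``roughly $\varepsilon_{a,t}\sqrt{m_T}/2$'' threshold estimate is not the relevant bound.
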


We define the common feasible point \(z_t:=x_0+p_{\mathcal C}(\bar u_t^\star)\), where \(p_{\mathcal C}\) is the exact gauge projection from Section~\ref{sec:prelim}, and \(\bar u_t^\star\) is the minimizer of hypothetical average potential \eqref{eq:average-potential}.

\begin{lemma}[Feasible-point disagreement]
\label{lem:feasible-point-disagreement}
For every agent \(a\) and round \(t\), the average internal primal state disagreement is bounded by 
\begin{align*}
    \sum_{t=1}^T\frac1N\sum_{j=1}^N
    \|w_{a,t}-w_{j,t}\|_2
    \leq{}
    2R+(1+\kappa)(E_{a,T}+E_T)
    +
    \frac{(1+\kappa)\widetilde G T}{m_T}
    \left(
    M_a(W)+\frac1\delta
    \right).
\end{align*}
\end{lemma}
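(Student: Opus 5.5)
The plan is to route every pairwise comparison through the common feasible point \(z_t=x_0+p_{\mathcal C}(\bar u_t^\star)\) and use the triangle inequality:
\[
\|w_{a,t}-w_{j,t}\|_2\leq\|w_{a,t}-z_t\|_2+\|z_t-w_{j,t}\|_2 .
\]
Averaging over \(j\) and summing over \(t\) then splits the left-hand side into an agent-\(a\) term \(\sum_t\|w_{a,t}-z_t\|_2\) and an averaged term \(\sum_t\frac1N\sum_j\|w_{j,t}-z_t\|_2\). Both are handled by the same per-round estimate, applied first to agent \(a\) and then to each \(j\).

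For a generic agent \(b\), I would write \(w_{b,t}-z_t=p_{b,t}-p_{\mathcal C}(\bar u_t^\star)\) and insert the exact projection of the agent's own state:
\[
\|p_{b,t}-p_{\mathcal C}(\bar u_t^\star)\|_2\leq\|p_{b,t}-p_{\mathcal C}(u_{b,t})\|_2+\|p_{\mathcal C}(u_{b,t})-p_{\mathcal C}(\bar u_t^\star)\|_2 .
\]

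The first piece is the approximation error of \textsc{GaugeDist}. Using the guarantee that \(S_{b,t}\) overestimates the true gauge distance by at most \(\varepsilon_{\mathrm{gau}}\) (Lemma~\ref{lem:gauge-properties}), I expect
\[
\Bigl\|\tfrac{u}{1+S}-\tfrac{u}{1+S_{\mathcal C}(u)}\Bigr\|_2\leq\|u\|_2\,\varepsilon_{\mathrm{gau}}\leq R/T .
\]
Summed over \(T\) rounds this gives \(R\), and it appears once on each side of the triangle inequality; this accounts for the \(2R\) term.

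The second piece needs a Lipschitz property of the exact gauge projection on \(R\mathbb B_2\). I would take it from Lemma~\ref{lem:gauge-proj-prop}, in the form \(\|p_{\mathcal C}(u)-p_{\mathcal C}(v)\|_2\leq(1+\kappa)\|u-v\|_2\). If it has to be proved directly, the constant comes from two facts: \(\gamma_{\mathcal C}\) is \(1/r\)-Lipschitz because \(r\mathbb B_2\subseteq\mathcal C\), and \(\|u\|_2\leq R\). Differentiating \(u/\max\{1,\gamma_{\mathcal C}(u)\}\) then gives the bound \(1+R/r\).

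Next comes the primal distance. By \eqref{eq:local-solve-accuracy} and Lemma~\ref{lem:minimizer-disagreement},
\[
\|u_{b,t}-\bar u_t^\star\|_2\leq\|u_{b,t}-u_{b,t}^\star\|_2+\|u_{b,t}^\star-\bar u_t^\star\|_2\leq\varepsilon_{b,t}+\frac{d_{b,t}}{m_T}.
\]

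Summing over \(t\) for \(b=a\) gives
\[
\sum_t\|w_{a,t}-z_t\|_2\leq R+(1+\kappa)\Bigl(E_{a,T}+\tfrac{1}{m_T}\textstyle\sum_t d_{a,t}\Bigr),
\]
and \eqref{eq:row-state-sum} bounds \(\sum_t d_{a,t}\leq T\widetilde G M_a(W)\). Averaging over \(b\) gives \(R+(1+\kappa)\bigl(E_T+\frac{T\widetilde G}{\delta m_T}\bigr)\), using \eqref{eq:average-state-sum}. Adding the two yields exactly the stated bound.

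I expect the main obstacle to be the two gauge facts: the per-round \(R\varepsilon_{\mathrm{gau}}\) approximation bound and the \((1+\kappa)\)-Lipschitz constant of \(p_{\mathcal C}\). The care is needed because \(u_{b,t}\) may lie outside \(\mathcal C\) while \(\bar u_t^\star\) lies inside. I would handle this case split (one point in \(\mathcal C\), one outside) by integrating along the segment between them, which stays inside \(R\mathbb B_2\) by convexity.
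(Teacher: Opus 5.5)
Your proposal is correct and follows the paper's proof essentially step for step: the triangle inequality through $z_t$, the per-round $R/T$ \textsc{GaugeDist} error, the $(1+\kappa)$-Lipschitz bound on $p_{\mathcal C}$, and then \eqref{eq:local-solve-accuracy}, Lemma~\ref{lem:minimizer-disagreement}, \eqref{eq:row-state-sum} and \eqref{eq:average-state-sum}. Two small corrections: the $\varepsilon_{\mathrm{gau}}$-accuracy of $S$ comes from Lemma~\ref{lem:gauge-proj-prop}, and the Lipschitz bound on $p_{\mathcal C}$ is not stated in any lemma; the paper derives it from the $1/r$-Lipschitzness of $S_{\mathcal C}$ by writing $u/A-v/B=(u-v)/A+v(1/A-1/B)$ with $A,B\ge 1$, which avoids your differentiation and case split entirely.
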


\subsection{Proof of the Main Theorem}
\label{sec:analysis-main-proof}

\begin{proof}[Proof of Theorem~\ref{thm:main}]

Define the owner-local payoff term 
\[
    D_\alpha(T;y) := \mathbb E\left[ \frac1N\sum_{j=1}^N\sum_{t=1}^T \left(\alpha f_{t,j}(y) -f_{t,j}(h(w_{j,t})) \right) \right].
\]
Lemma~\ref{lem:gauge-surrogate} gives \(w_{j,t}\in\mathcal K\).
By the definition of \(\mathcal F_t^{-}\), \(w_{j,t}\) is \(\mathcal F_t^{-}\)-measurable, while \(f_{t,j}\) and \(y\) are fixed.
Moreover, \(q_{j,t}\), and hence \(\ell_{j,t}=-q_{j,t}\), is integrable by Assumption~\ref{ass:feedback}.
Therefore, conditional on \(\mathcal F_t^{-}\), Definition~\ref{def:upper-linearizable} and Assumption~\ref{ass:feedback} imply
\begin{align*}
    \alpha f_{t,j}(y)-f_{t,j}(h(w_{j,t}))
    &\leq
    \beta\langle
    \mathfrak g(f_{t,j},w_{j,t}),y-w_{j,t}
    \rangle\\
    &=
    \beta\,
    \mathbb E\left[
    \langle\ell_{j,t},w_{j,t}-y\rangle
    \mid\mathcal F_t^{-}
    \right].
\end{align*}
Taking total expectations yields 
\begin{align}
\label{eq:payoff-lifting}
D_\alpha(T;y)\leq \frac \beta N \sum_{j=1}^N\sum_{t=1}^T \langle \ell_{j,t},w_{j,t}-y\rangle,
\end{align}
and we call $\frac 1N\sum_{j=1}^N\sum_{t=1}^T \langle \ell_{j,t},w_{j,t}-y\rangle$ on the right hand side the linear surrogate regret.
Note that compared with the network aggregate regret, the owner-local payoff terms use the action for the agent of the payoff function, while the network regret fix the agent of the action. The gap between these two will be bounded in the second step after bounding the linear surrogate regret. After that, we will present parameter tuning to obtain best result, and analyze the total SO calls.

\textbf{Linear Surrogate Regret.}
Fix \(y\in\mathcal K\), let \(v_0:=y-x_0\in\mathcal C\), and set $v:=\left(1-\frac1T\right)v_0.$
Since $0\in\mathcal C$ and $\mathcal C$ is convex, $v\in\mathcal C$. Moreover, Assumption~\ref{ass:geometry} gives $\|v_0\|_2\le R$, and hence
\[
    \|v\|_2
    =
    \left(1-\frac{1}{T}\right)\|v_0\|_2
    \le
    \left(1-\frac{1}{T}\right)R
    < R.
\]
Thus $v\in\mathcal C\cap\operatorname{int}(R\mathbb B_2)$, so it is a valid comparator for the barrier-FTRL bound. 
Meanwhile,
\[
    \|x_0+v-y\|_2
    = \|-\frac{1}{T}(y-x_0)\|_2
    = \frac{1}{T}\|v_0\|_2
    \le
    \frac{R}{T}.
\]
Since \(\|\ell_{j,t}\|_2=\|q_{j,t}\|_2\leq G\) by Assumption~\ref{ass:feedback}, we have through Cauchy-Schwartz:
\[
    \frac1N\sum_{j=1}^N\sum_{t=1}^T  \langle\ell_{j,t},x_0+v-y\rangle
    \leq \frac1N\sum_{j=1}^N\sum_{t=1}^T \|\ell_{j,t}\|_2 \|x_0+v-y\|_2
    \leq GR.
\]
For each agent $j$ and round $t$, decompose $w_{j,t}-y=(w_{j,t}-(x_0+v)) + (x_0+v-y)$ we have
\begin{align*}
\langle\ell_{j,t},w_{j,t}-y\rangle
& = \langle \ell_{j,t},w_{j,t}-(x_0+v)\rangle 
+ \langle \ell_{j,t},x_0+v-y\rangle \\
& \leq \left( \langle \widetilde\ell_{j,t},u_{j,t}-v\rangle
    +\frac{2GR}{T} \right) + \|\ell_{j,t}\|_2\|x_0+v-y\|_2 \\
& \leq \langle \widetilde\ell_{j,t},u_{j,t}-v\rangle + \frac{3GR}{T},
\end{align*}
where the second inequality follows from Lemma~\ref{lem:gauge-surrogate} and cauchy-shawartz, and the third inequality follows from the bound on $ \|\ell_{j,t}\|_2$ and $\|x_0+v-y\|_2$. 
Summing over \(t\) and averaging over the $N$ agents then gives
\begin{equation}
    \frac1N\sum_{j=1}^N\sum_{t=1}^T
    \langle\ell_{j,t},w_{j,t}-y\rangle
    \leq
    3GR+
    \frac1N\sum_{j=1}^N\sum_{t=1}^T
    \langle\widetilde\ell_{j,t},u_{j,t}-v\rangle.
    \label{eq:main-reduction}
\end{equation}
The remaining term has the exact decomposition
\begin{align}
\frac1N\sum_{j=1}^N\sum_{t=1}^T \langle\widetilde\ell_{j,t},u_{j,t}-v\rangle
= \frac1N\sum_{j,t} \langle\widetilde\ell_{j,t}, u_{j,t}-u_{j,t}^\star\rangle
+ \frac1N\sum_{j,t} \langle\widetilde\ell_{j,t}, u_{j,t}^\star-\bar u_t^\star\rangle
+ \sum_{t=1}^T \langle\bar\ell_t,\bar u_t^\star-v\rangle.
\label{eq:three-way-decomposition}
\end{align}
The first term on the right is bounded by 
\begin{align*}
    \frac1N\sum_{j,t} \langle\widetilde\ell_{j,t}, u_{j,t}-u_{j,t}^\star\rangle
    \leq \frac1N \sum_{j,t} \|\widetilde\ell_{j,t}\|_2 \|u_{a,t}-u_{a,t}^\star\|_2 
    = \widetilde G \frac1N \sum_{j,t} \|u_{a,t}-u_{a,t}^\star\|_2 
    = \widetilde G E_T,
\end{align*}
where the first inequality is due to cauchy-shwarz inequality, the second equation comes from Lemmas~\ref{lem:gauge-surrogate} and the third equation comes from Lemma~\ref{lem:local-solve-accuracy}.

The second term is bounded by 
\begin{align*}
    \frac1N\sum_{j,t} \langle\widetilde\ell_{j,t}, u_{j,t}^\star-\bar u_t^\star\rangle
    \leq \frac1N\sum_{j,t} \|\widetilde\ell_{j,t}\|_2 \|u_{a,t}^\star-\bar u_t^\star\|_2
    \leq \frac{\widetilde G}{m_T} \sum_{t=1}^T\frac1N\sum_{j=1}^N d_{j,t}
    \leq \frac{T\widetilde G^2}{m_T\delta}.
\end{align*}
where the first inequality is due to cauchy-shwarz inequality, the second inequality comes from Lemma~\ref{lem:gauge-surrogate} and Lemma~\ref{lem:minimizer-disagreement}, the third and forth inequality come from Lemma~\ref{lem:dual-consensus}.

The third term is directed bounded by Lemma~\ref{lem:average-bftrl} with
\begin{align*}
    \sum_{t=1}^T \langle\bar\ell_t,\bar u_t^\star-v\rangle
    & \leq \bar\Psi_1(v)-\bar\Psi_1(0) +\frac{T\widetilde G^2}{m_T} \\
    &= -\nu\log\left( 1-\frac{\|v\|_2^2}{R^2} \right)
    +\frac{\mu_T}{2}\|v\|_2^2 + \frac{T\widetilde G^2}{m_T} \\
    &\leq \nu\log T+\frac{\mu_TR^2}{2} + \frac{T\widetilde G^2}{m_T}
\end{align*}
where the first inequality is from Lemma~\ref{lem:average-bftrl}, the second equation is because \eqref{eq:exact-average-state} gives \(\bar\theta_1=0\), and the third inequality is because \(\|v\|_2\leq(1-1/T)R\) and
\(1-(1-1/T)^2\geq1/T\).

Substitution of the three partial bounds back into
\eqref{eq:main-reduction} bounds the linear surrogate regret:
\begin{align}\label{eq:linear-regret}
    \frac1N\sum_{j=1}^N\sum_{t=1}^T
    \langle\ell_{j,t},w_{j,t}-y\rangle
    \leq
    B_{\mathrm{lin}}
    :={}
    3GR+\nu\log T+\frac{\mu_TR^2}{2}
    +\widetilde G E_T+
    \frac{T\widetilde G^2}{m_T}
    \left(1+\frac1\delta\right).
\end{align}
Thus, we have $D_\alpha(T;y)\leq\beta B_{\mathrm{lin}}$.

\textbf{Network-aggregate regret.}
For a fixed comparator \(y\in\mathcal K\), write
\[
    \mathfrak R_\alpha^a(T;y)
    :=
    \mathbb E\left[
    \sum_{t=1}^T
    \left(
    \alpha\mathsf F_t(y)
    -\mathsf F_t(h(w_{a,t}))
    \right)
    \right],
\]
where $\mathsf F_t\!(y) = \frac1N\sum_{j=1}^N f_{t,j}\!(y)$
and let \(\phi_{t,j}:=f_{t,j}\circ h\). Expanding the aggregate payoff gives the exact identity
\begin{align}
\mathfrak R_\alpha^a(T;y)
&= \mathbb E\left[ \sum_{t=1}^T
\left( \alpha\mathsf F_t(y) -\mathsf F_t(h(w_{a,t})) \right) \right] 
= \mathbb E\left[ \frac1N\sum_{j=1}^N\sum_{t=1}^T \left( \alpha f_{t,j}(y) -f_{t,j}(h(w_{a,t})) \right) \right] \nonumber\\
&= \mathbb E\left[ \frac1N\sum_{j=1}^N\sum_{t=1}^T \left( \alpha f_{t,j}(y) -f_{t,j}(h(w_{j,t})) \right) \right] 
+ \mathbb E\left[ \frac1N\sum_{j=1}^N\sum_{t=1}^T \left( f_{t,j}(h(w_{j,t})) -f_{t,j}(h(w_{a,t})) \right) \right] \nonumber\\
&= D_\alpha(T;y) + \mathbb E\left[ \sum_{t=1}^T\frac1N\sum_{j=1}^N \left( \phi_{t,j}(w_{j,t}) -\phi_{t,j}(w_{a,t}) \right) \right].
\label{eq:aggregate-transfer}
\end{align}

Recall Lemma~\ref{lem:gauge-surrogate} ensures that both \(w_{j,t}\) and \(w_{a,t}\) belong to \(\mathcal K\), so Assumption~\ref{ass:payoff-stability} applies. Since \(\phi_{t,j}=f_{t,j}\circ h\),
\[
\phi_{t,j}(w_{j,t})-\phi_{t,j}(w_{a,t})
\leq
\left|
\phi_{t,j}(w_{j,t})-\phi_{t,j}(w_{a,t})
\right|
\leq
L_\phi\|w_{j,t}-w_{a,t}\|_2.
\] 
Thus, substituting the linear surrogate regret of the local-owner payoffs $D_\alpha(T;y)\leq\beta B_{\mathrm{lin}}$ and the above inequality into \eqref{eq:aggregate-transfer} we have
\begin{align}
\mathfrak R_\alpha^a(T;y)
&\leq \beta B_{\mathrm{lin}} + L_\phi\, \mathbb E\left[ \sum_{t=1}^T\frac1N\sum_{j=1}^N \|w_{j,t}-w_{a,t}\|_2 \right] \nonumber\\
&\leq \beta B_{\mathrm{lin}} + L_\phi\left[ 2R+(1+\kappa)(E_{a,T}+E_T)  + \frac{(1+\kappa)\widetilde G T}{m_T}  \left( M_a(W)+\frac1\delta \right) \right].
\label{eq:fixed-comparator-aggregate-bound}
\end{align}
where the second inequality comes from Lemma~\ref{lem:feasible-point-disagreement}.
The payoff sequence is fixed before the learner's randomness, and the hindsight maximum is attained. We may therefore select \(y^\star\in \arg\max_{y\in\mathcal K} \sum_{t=1}^T \mathsf F_t(y)\), then \(\mathfrak R_\alpha^a(T) =\mathfrak R_\alpha^a(T;y^\star)\).

\textbf{Parameter choices and separation-oracle calls.}
Lemma~\ref{lem:local-solve-accuracy} gives
\(E_{a,T},E_T\leq R\). Since \(m_T\geq\mu_T\),
\[
\frac{\mu_TR^2}{2}
    + \frac{T\widetilde G^2}{m_T}
    \left(1+\frac1\delta\right)
    \leq
    \frac{\mu_TR^2}{2}
    +
    \frac{T\widetilde G^2}{\mu_T}
    \left(1+\frac1\delta\right)
    =
    \widetilde G R
    \sqrt{2T\left(1+\frac1\delta\right)}.
\]
Using \(\nu=1\), it follows that \(\beta B_{\mathrm{lin}}\leq\mathcal B_T.\)
Furthermore, \(\frac{\widetilde G T}{m_T} \leq \frac{\widetilde G T}{\mu_T} = R\sqrt{\frac{T\delta}{2(1+\delta)}}.\)
Substituting these bounds back to
\eqref{eq:fixed-comparator-aggregate-bound}, and observing that \(2R+(1+\kappa)(E_{a,T}+E_T) \leq2R(2+\kappa),\) we have \eqref{eq:main-regret}:
\[
\mathfrak R_\alpha^a(T)
\leq{}
\mathcal B_T
+2L_\phi R(2+\kappa)
+
L_\phi(1+\kappa)R
\sqrt{\frac{T\delta}{2(1+\delta)}}
\left(
M_a(W)+\frac1\delta
\right).
\]
where \(\mathcal B_T
=
\beta\left[
3GR+\widetilde G R+\log T
+\widetilde G R
\sqrt{2T\left(1+\frac1\delta\right)}
\right].\)

Finally, \(u_{a,1}=0\), and Lemma~\ref{lem:local-solve-accuracy} ensures that \(u_{a,t}\in\operatorname{int}(R\mathbb B_2)\) for \(2\leq t\leq T\). Thus every \textsc{GaugeDist} query satisfies the domain hypothesis of Lemma~\ref{lem:gauge-proj-prop}. Since \(\varepsilon_{\rm gau}=1/T\in(0,1]\), that lemma gives at most \(1+\left\lceil\log_2(4\kappa^2T)\right\rceil\) separation-oracle calls per invocation. There is one invocation per agent-round. By translation, each call to \(\operatorname{Sep}_{\mathcal C}\) is implemented by one call to \(\operatorname{Sep}_{\mathcal K}\), proving the stated per-agent \textsc{GaugeDist} total.
\end{proof}

%======================================================================================================

\section{Conclusion, Limitations, and Future Works}
\label{sec:conclusion}

We proposed \algname, for decentralized online upper-linearizable optimization under separation-oracle access. The algorithm attains agent-average expected approximate regret $\widetilde O\!\left(\sqrt{T}\right)$ with linear communication in horizon and $O(T\log(\kappa T))$ total separation-oracle calls. Our analysis avoids distributed curvature tracking. The agents mix a single first-order dual state, and minimizer sensitivity converts disagreement in that state into a primal regret term. A controlled approximate hybrid Newton procedure supplies summable local error without adding communication.

The guarantee in this work is specific to domains with a known full-dimensional interior point and economical separation access. It does not impose a universal computational ordering between separation and linear-optimization oracles. Extension to relative-interior geometries such as matroid basis polytopes remains open, and it is necessary before incorporating application to other classes such as the one-sided smooth functions.
Separately, for the applications to DR-submodular classes, this work only discusses static regret under the natural first-order feedback setting. Extensions to limited feedback setting, including zeroth-order and bandit feedback, and the investigation of non-stationary environment like dynamic and adaptive regret, remain open.

%======================================================================================================

\newpage
\bibliography{main}
\bibliographystyle{plainnat}

\newpage
\appendix
\section{Approximate Gauge Projection}
\label{app:gauge-oracle}

This appendix records the separation-based approximate gauge projection subroutine (Algorithm~\ref{alg:gauge-dist}) used by the main algorithm, whose construction and guarantees are illustrated by \citet{mhammedi2025online}, and no novelty is claimed for this subroutine.
We state it for the translated set $\mathcal C=\mathcal K-x_0$, for which $r\mathbb B_2\subseteq\mathcal C\subseteq R\mathbb B_2$. Lemma ~\ref{lem:gauge-properties} describes useful properties of the gauge function and gauge distance, and Lemma~\ref{lem:gauge-proj-prop} describes the feasibility guarantee and budget of calls to the SO.

\begin{algorithm}[H]
\caption{\textsc{GaugeDist}: approximate gauge distance and subgradient}
\label{alg:gauge-dist}
\begin{algorithmic}[1]
\REQUIRE Separation oracle $\operatorname{Sep}_{\mathcal C}$, query
$u\in\mathbb R^d$, precision
$\varepsilon_{\rm gau}\in(0,1]$, and inner radius $r$
\STATE $(b,v)\leftarrow\operatorname{Sep}_{\mathcal C}(u)$
\IF{$b=1$}
    \RETURN $(S,s)=(0,0)$
\ENDIF
\STATE $\alpha\leftarrow0$, $\beta\leftarrow1$,
$\zeta\leftarrow(\alpha+\beta)/2$
\WHILE{$\beta-\alpha>
 r^2\varepsilon_{\rm gau}/(2\|u\|_2^2)$}
    \STATE $(b,v_{\rm new})\leftarrow
    \operatorname{Sep}_{\mathcal C}(\zeta u)$
    \IF{$b=1$} 
    \STATE $\alpha\leftarrow\zeta$
    \ELSE
        \STATE $\beta\leftarrow\zeta$ and $v\leftarrow v_{\rm new}$
    \ENDIF
    \STATE $\zeta\leftarrow(\alpha+\beta)/2$
\ENDWHILE
\STATE $S\leftarrow\alpha^{-1}-1$ and
$s\leftarrow v/(\beta\langle v,u\rangle)$
\RETURN $(S,s)$
\end{algorithmic}
\end{algorithm}

As introduced in Preliminaries, this algorithm is built on gauge function and gauge distance 

\begin{lemma}[Gauge properties]
\label{lem:gauge-properties}
The function $\gamma_{\mathcal C}$ is convex and positively homogeneous.
Moreover,
\[
 \gamma_{\mathcal C}(u)\leq\frac{\|u\|_2}{r},
 \qquad
 S_{\mathcal C}(u)=\max\{0,\gamma_{\mathcal C}(u)-1\}.
\]
Every subgradient $s$ of $\gamma_{\mathcal C}$ has Euclidean norm at most
$1/r$.
\end{lemma}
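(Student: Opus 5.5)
We prove the four claims of Lemma~\ref{lem:gauge-properties} in order, using only the sandwich $r\mathbb B_2\subseteq\mathcal C\subseteq R\mathbb B_2$ from Assumption~\ref{ass:geometry} and the convexity of $\mathcal C$ with $0\in\operatorname{int}\mathcal C$.

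\textbf{Positive homogeneity and convexity.} For $\lambda>0$ we have $\lambda u\in a\mathcal C$ iff $u\in(a/\lambda)\mathcal C$. Taking infima gives $\gamma_{\mathcal C}(\lambda u)=\lambda\gamma_{\mathcal C}(u)$.

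For convexity it suffices, given homogeneity, to show subadditivity. Take $u\in a\mathcal C$ and $u'\in b\mathcal C$ with $a,b>0$. Then
\[
u+u'=(a+b)\Bigl(\tfrac{a}{a+b}\tfrac{u}{a}+\tfrac{b}{a+b}\tfrac{u'}{b}\Bigr)\in(a+b)\mathcal C
\]
by convexity of $\mathcal C$. Taking infima over $a$ and $b$ gives $\gamma_{\mathcal C}(u+u')\leq\gamma_{\mathcal C}(u)+\gamma_{\mathcal C}(u')$. Combining subadditivity with homogeneity yields convexity.

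\textbf{Upper bound $\gamma_{\mathcal C}(u)\le\|u\|_2/r$.} For $u\neq0$, the point $r u/\|u\|_2$ lies in $r\mathbb B_2\subseteq\mathcal C$. Hence $u\in(\|u\|_2/r)\mathcal C$, which gives the bound. For $u=0$ we have $0\in a\mathcal C$ for every $a>0$, so $\gamma_{\mathcal C}(0)=0$ and the bound holds trivially.

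\textbf{The identity for $S_{\mathcal C}$.} This is the definition of the gauge distance from Section~\ref{sec:prelim}, so there is nothing to prove. If desired, one can add that closedness of $\mathcal C$ gives $\mathcal C=\{\gamma_{\mathcal C}\le1\}$, so $S_{\mathcal C}$ vanishes exactly on $\mathcal C$.

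\textbf{Subgradient norm bound.} This is the only step that needs an argument rather than a direct rewrite, though it is still short. Let $s\in\partial\gamma_{\mathcal C}(u)$. The subgradient inequality gives, for all $z$,
\[
\gamma_{\mathcal C}(z)\ge\gamma_{\mathcal C}(u)+\langle s,z-u\rangle.
\]
Use homogeneity in two ways.
\begin{itemize}
\item Setting $z=\lambda u$ and letting $\lambda\to0^+$ and $\lambda\to\infty$ forces $\langle s,u\rangle=\gamma_{\mathcal C}(u)$.
\item Substituting this back gives $\langle s,z\rangle\le\gamma_{\mathcal C}(z)$ for all $z$.
\end{itemize}
Now choose $z=s/\|s\|_2$, which is valid when $s\neq0$; the case $s=0$ is trivial. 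By the upper bound above,
\[
\|s\|_2=\langle s,z\rangle\le\gamma_{\mathcal C}(z)\le\frac{\|z\|_2}{r}=\frac1r.
\]
This proves $\|s\|_2\le1/r$.
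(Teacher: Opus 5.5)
Your proof is correct, but it takes a different route from the paper, whose proof is only a citation. It refers to Lemma~2.1 and Appendix~G of \citet{mhammedi2025online} and notes that those results apply because $0\in\operatorname{int}(\mathcal C)$ and $r\mathbb B_2\subseteq\mathcal C$. You instead derive each claim from first principles:
\begin{itemize}
\item homogeneity, by rescaling the defining set;
\item convexity, via subadditivity obtained from convexity of $\mathcal C$;
\item the bound $\gamma_{\mathcal C}(u)\le\|u\|_2/r$, from $ru/\|u\|_2\in r\mathbb B_2\subseteq\mathcal C$;
\item the subgradient bound, via the sublinear-function identity $\langle s,u\rangle=\gamma_{\mathcal C}(u)$, which gives $\langle s,z\rangle\le\gamma_{\mathcal C}(z)$ for all $z$ and hence $\|s\|_2\le\gamma_{\mathcal C}(s/\|s\|_2)\le 1/r$.
\end{itemize}
The citation is shorter and defers to prior work. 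Your argument is self-contained and shows that only convexity of $\mathcal C$ and the inner inclusion $r\mathbb B_2\subseteq\mathcal C$ are needed. The outer radius $R$ plays no role, and closedness enters only your optional remark that $S_{\mathcal C}$ vanishes exactly on $\mathcal C$.

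Your route also gives directly the $1/r$-Lipschitz continuity of $\gamma_{\mathcal C}$, which the paper later attributes to this lemma in the proof of Lemma~\ref{lem:feasible-point-disagreement}. Subadditivity together with your upper bound gives $\gamma_{\mathcal C}(u)-\gamma_{\mathcal C}(v)\le\gamma_{\mathcal C}(u-v)\le\|u-v\|_2/r$, and symmetrically with $u$ and $v$ exchanged.
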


\begin{proof}
These are the standard gauge properties collected in
\citet[Lemma~2.1 and Appendix~G]{mhammedi2025online}.  They apply to
$\mathcal C$ because the translated geometry places the origin in its
interior and contains $r\mathbb B_2$.
\end{proof}

\begin{lemma}[Approximate gauge guarantee and SO call budget]
\label{lem:gauge-proj-prop}
For any $u\in R\mathbb B_2$ and
precision $\varepsilon_{\rm gau}\in(0,1]$,
Algorithm~\ref{alg:gauge-dist} returns $(S,s)$ satisfying
\begin{align*}
    S_{\mathcal C}(u)
    \leq S\leq S_{\mathcal C}(u)+\varepsilon_{\rm gau},
    \quad \text{and} \quad \|s\|_2 \leq\frac1r,
    \quad \text{and}\quad S_{\mathcal C}(v)
    \geq S_{\mathcal C}(u)+\langle s,v-u\rangle
      -\varepsilon_{\rm gau},
    \forall v\in\mathbb R^d.
\end{align*}
If $u\in\mathcal C$, the algorithm terminates after one
separation-oracle call.  If $u\notin\mathcal C$, it makes at most
\begin{equation}
 1+\left\lceil
 \log_2\!\left(\frac{4\|u\|_2^2}
 {r^2\varepsilon_{\rm gau}}\right)
 \right\rceil
 \leq
 1+\left\lceil
 \log_2\!\left(\frac{4\kappa^2}
 {\varepsilon_{\rm gau}}\right)
 \right\rceil
\label{eq:gauge-call-budget}
\end{equation}
separation-oracle calls.  In particular, the right hand side expression is a
uniform upper bound in either case.
\end{lemma}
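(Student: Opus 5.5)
The statement to prove is Lemma~\ref{lem:gauge-proj-prop}: the bisection in Algorithm~\ref{alg:gauge-dist} returns $(S,s)$ that are accurate and nearly a subgradient, within a logarithmic call budget. The plan follows the bisection invariants.

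\textbf{Feasible case and invariants.} If $u\in\mathcal C$, the first oracle call certifies membership and the algorithm returns $(0,0)$. Here $S_{\mathcal C}(u)=0$, and $S_{\mathcal C}\geq0=S_{\mathcal C}(u)+\langle 0,v-u\rangle$, so every claim is immediate.

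Otherwise $\gamma:=\gamma_{\mathcal C}(u)>1$. Throughout the loop I would maintain three invariants:
\begin{itemize}
\item $\alpha u\in\mathcal C$, which holds initially since $0\in\mathcal C$;
\item $\beta u\notin\mathcal C$, which holds initially since $\beta=1$;
\item $v$ is a separating vector for $\beta u$, i.e.\ $\langle v,\beta u-z\rangle>0$ for all $z\in\mathcal C$.
\end{itemize}
These give $\alpha\le 1/\gamma\le\beta$. Hence $S=\alpha^{-1}-1\geq\gamma-1=S_{\mathcal C}(u)$, and $\alpha\geq r/\|u\|_2$ because $r\mathbb B_2\subseteq\mathcal C$.

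\textbf{Accuracy of $S$.} At termination $\beta-\alpha\le r^2\varepsilon_{\rm gau}/(2\|u\|_2^2)$, so
\[
\alpha^{-1}-\beta^{-1}=\frac{\beta-\alpha}{\alpha\beta}\le(\beta-\alpha)\frac{\|u\|_2^2}{r^2}\le\frac{\varepsilon_{\rm gau}}{2}.
\]
Since $\beta^{-1}\le\gamma$, this gives $S\le S_{\mathcal C}(u)+\varepsilon_{\rm gau}$.

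\textbf{Call count.} Each iteration halves $\beta-\alpha$, starting from $1$. The loop therefore stops after at most $\lceil\log_2(2\|u\|_2^2/(r^2\varepsilon_{\rm gau}))\rceil$ iterations. Adding the initial call stays within the stated bound, and $\|u\|_2\le R$ gives the $\kappa$ form.

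\textbf{Norm of $s$.} Since $r\,v/\|v\|_2\in\mathcal C$, the separation property gives $\beta\langle v,u\rangle>r\|v\|_2$. Hence $\|s\|_2=\|v\|_2/(\beta\langle v,u\rangle)<1/r$.

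\textbf{Approximate subgradient inequality (the main step).} Define the linear function $\ell(x):=\langle s,x\rangle$. It satisfies $\ell(\beta u)=1$, and $\ell(z)<1$ for all $z\in\mathcal C$ by separation. By homogeneity, $\gamma_{\mathcal C}(x)\ge\ell(x)$ for every $x$. Two cases follow.
\begin{itemize}
\item If $\ell(v)\ge1$, then
\[
S_{\mathcal C}(v)\ge\gamma_{\mathcal C}(v)-1\ge\langle s,v\rangle-1=\langle s,v-u\rangle+(\beta^{-1}-1).
\]
Since $\beta^{-1}-1\ge\alpha^{-1}-1-\varepsilon_{\rm gau}/2\geq S_{\mathcal C}(u)-\varepsilon_{\rm gau}$, the inequality holds.
\item If $\ell(v)<1$, then $S_{\mathcal C}(v)\ge0$, and it suffices to show $S_{\mathcal C}(u)+\langle s,v-u\rangle-\varepsilon_{\rm gau}\le0$. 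This follows from $\langle s,v-u\rangle<1-\beta^{-1}$ combined with $S_{\mathcal C}(u)\le\alpha^{-1}-1\le\beta^{-1}-1+\varepsilon_{\rm gau}/2$.
\end{itemize}

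The main obstacle I expect is bookkeeping in this last step. Which convention the cited result uses (for instance $\langle s,u\rangle=1/\beta$) must match exactly. If the constants turn out tighter, I would fall back on citing \citet[Lemma~2.1 and Appendix~G]{mhammedi2025online} directly, as done for Lemma~\ref{lem:gauge-properties}.
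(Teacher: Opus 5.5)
\textbf{Comparison with the paper.} The paper does not reprove this lemma. It cites \citet[Lemma~2.2]{mhammedi2025online} for the three approximation properties and the exterior call count. It then only adds that an interior query costs one call and that $\|u\|_2\le R$ yields the $\kappa$ form. You instead derive everything directly from three ingredients: the bisection invariants ($\alpha u\in\mathcal C$, $\beta u\notin\mathcal C$, and $v$ separates $\beta u$), the normalization $\langle s,u\rangle=\beta^{-1}$, and the domination $\gamma_{\mathcal C}(x)\ge\langle s,x\rangle$. This gives a self-contained check that the algorithm as transcribed here (threshold $r^2\varepsilon_{\rm gau}/(2\|u\|_2^2)$, scaling $s=v/(\beta\langle v,u\rangle)$) delivers the stated constants, without relying on its conventions matching the cited source. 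You even get the slightly sharper count $1+\lceil\log_2(2\|u\|_2^2/(r^2\varepsilon_{\rm gau}))\rceil$. Your norm bound, call count, and subgradient argument are sound. The case split is unnecessary, since $S_{\mathcal C}(v)\ge\gamma_{\mathcal C}(v)-1\ge\langle s,v\rangle-1$ holds for every $v$.

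\textbf{One step fails as written.} The claim $\alpha\ge r/\|u\|_2$ does not follow from $r\mathbb B_2\subseteq\mathcal C$. That inclusion gives $1/\gamma_{\mathcal C}(u)\ge r/\|u\|_2$, but $\alpha$ only satisfies $\alpha\le 1/\gamma_{\mathcal C}(u)$. For example, take $\mathcal C=r\mathbb B_2$, $\|u\|_2=3r$, and $\varepsilon_{\rm gau}=9/16$. The loop stops at $[\alpha,\beta]=[5/16,11/32]$, so $\alpha<1/3$ and $\alpha^{-1}-\beta^{-1}=16/55>9/32=\varepsilon_{\rm gau}/2$. Your intermediate $\varepsilon_{\rm gau}/2$ bound is therefore false.

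The fix is one line. You have $\beta>1/\gamma_{\mathcal C}(u)\ge r/\|u\|_2$. At termination, $\alpha\ge\beta-r^2\varepsilon_{\rm gau}/(2\|u\|_2^2)>r/(2\|u\|_2)$, because $\|u\|_2>r$ and $\varepsilon_{\rm gau}\le1$. This gives $\alpha>0$, so $S$ is well defined, and $\alpha^{-1}-\beta^{-1}<\varepsilon_{\rm gau}$. That weaker bound is all your three downstream uses need: the upper bound on $S$ and both branches of the subgradient inequality.
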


\begin{proof}
The three approximation properties and the exterior-query call count are
exactly \citet[Lemma~2.2]{mhammedi2025online}, applied to
$\mathcal C$.  An interior query returns immediately.  For an exterior
query, $\|u\|_2>r$, so the displayed logarithm is positive; its second
inequality follows from $\|u\|_2\leq R$ and $\kappa=R/r$.
\end{proof}

At the precision $\varepsilon_{\rm gau}=1/T$ used by \algname,
the per-agent budget over the horizon is therefore
\[
 T\left(1+\left\lceil\log_2(4\kappa^2T)\right\rceil\right)
 =O\!\left(T\log(\kappa T)\right).
\]
The network-wide budget is $N$ times this quantity.

%======================================================================================================

\section{HybridNewton: Controlled Approximate Local Barrier-FTRL Solver}
\label{app:hybrid-newton}

This appendix defines and analyzes \solvername{}, the local subroutine used by \algname{} to approximately minimize a local barrier-FTRL potential. The subroutine is a damped/full Newton implementation equipped with control over a computable Euclidean-accuracy. Its role here is to provide the local-solve guarantee required by the regret analysis, and it uses no communication, feedback-oracle calls, or action-set-oracle calls.

For a target dual vector $\theta\in\mathbb R^d$, regularization parameters $\mu>0$ and $\nu\geq1$, and radius $R>0$, define the generic barrier-FTRL potential for $u\in\operatorname{int}(R\mathbb B_2)$ by
\begin{equation}
 F(u):=-\nu\log(R^2-\|u\|_2^2)
 +\frac{\mu}{2}\|u\|_2^2+\langle\theta,u\rangle.
 \label{eq:generic-potential}
\end{equation}
Set \(m:=\mu+2\nu/R^2\). Then \(\nabla^2F(u)\succeq mI\). Since \(F\) diverges at the boundary, it has a unique interior minimizer
\[
 u^\star:=\arg\min_{u\in\operatorname{int}(R\mathbb B_2)}F(u).
\]

For an interior point $u$, define the Newton decrement of $F$ at $u$ by
\[
\lambda(u):= 
 \|\nabla F(u)\|_{[\nabla^2F(u)]^{-1}}.
\]
Given a prescribed Euclidean accuracy \(\varepsilon>0\), define the stopping
threshold
\begin{equation}
 \tau_\varepsilon:=
 \frac{\sqrt{m}\,\varepsilon}
 {1+\sqrt{m}\,\varepsilon}.
\label{eq:solver-stopping}
\end{equation}

We record the standard self-concordant facts used in the solver analysis. For $H(u):=\nabla^2F(u)$, write
\[
 \|h\|_u:=\sqrt{h^\top H(u)h},
 \qquad
 \|z\|_{u,*}:=\sqrt{z^\top H(u)^{-1}z}.
\]
Thus $\lambda(u)=\|\nabla F(u)\|_{u,*}$, and the unit Dikin ellipsoid at $u$ is given by
\[
 \mathcal E_u:=\{u+h:\|h\|_u<1\}.
\]
For $\nu\geq1$, the logarithmic term in \eqref{eq:generic-potential} is self-concordant on $\operatorname{int}(R\mathbb B_2)$. Adding the convex quadratic and linear terms preserves self-concordance on the same open domain, and the resulting $F$ still diverges at the boundary \citep{nemirovski2008interior}.

Standard self-concordant analysis gives $\mathcal E_u\subseteq\operatorname{dom}F$. If $\lambda(u)<1$, then
\begin{equation}
 \|u-u^\star\|_u
 \leq\frac{\lambda(u)}{1-\lambda(u)}.
 \label{eq:sc-proximity}
\end{equation}
Moreover, the damped update in \eqref{eq:damped-newton} satisfies
\begin{equation}
 F(u^+)\leq F(u)-\bigl(\lambda(u)-\log(1+\lambda(u))\bigr),
 \label{eq:sc-damped-decrease}
\end{equation}
and the full update in \eqref{eq:full-newton}, whenever $\lambda(u)<1$, satisfies
\begin{equation}
 \lambda(u^+)\leq
 \left(\frac{\lambda(u)}{1-\lambda(u)}\right)^2.
 \label{eq:sc-full-contraction}
\end{equation}
These are cited standard properties rather than new claims of this paper.
The computable stopping test $\lambda(u)\leq\tau_\varepsilon$ guarantees the requested Euclidean accuracy, as shown below.

\begin{algorithm}[H]
\caption{\solvername: Local Barrier-FTRL Solver with Guaranteed Accuracy}
\label{alg:hybrid-newton}
\begin{algorithmic}[1]
\REQUIRE Potential $F$ of the form \eqref{eq:generic-potential}, interior
initialization $u_0$, and accuracy $\varepsilon>0$
\ENSURE $u^+$ satisfying
$\lambda(u^+)\leq\tau_\varepsilon$ and hence
$\|u^+-u^{\star}\|_2
\leq\varepsilon$
\STATE $u\leftarrow u_0$
\STATE $\tau_\varepsilon\leftarrow
\sqrt{m}\varepsilon/
(1+\sqrt{m}\varepsilon)$
\WHILE{$\lambda(u)>1/4$}
  \STATE $u\leftarrow u-\frac{1}{1+\lambda(u)}
  [\nabla^2F(u)]^{-1}\nabla F(u)$
\ENDWHILE
\WHILE{$\lambda(u)>\tau_\varepsilon$}
  \STATE $u\leftarrow u-
  [\nabla^2F(u)]^{-1}\nabla F(u)$
\ENDWHILE
\STATE $u^+\leftarrow u$
\RETURN $u^+$
\end{algorithmic}
\end{algorithm}

The next two lemmas isolate the safety and accuracy facts used in the main analysis. Proposition~\ref{prop:hybrid-newton-guarantee} then summarizes finite termination and the guaranteed output of the complete subroutine.  The final lemma records the cost of one exact Newton iteration.

\begin{samepage}
\begin{lemma}[Interior safety]
\label{lem:dikin-safety}
Suppose $\nu\geq1$ and
$u\in\operatorname{int}(R\mathbb B_2)$.  The damped Newton update
\begin{equation}
 u^+=u-\frac{[\nabla^2F(u)]^{-1}\nabla F(u)}
 {1+\lambda(u)}
\label{eq:damped-newton}
\end{equation}
remains in $\operatorname{int}(R\mathbb B_2)$.  The full Newton update
\begin{equation}
 u^+=u-[\nabla^2F(u)]^{-1}\nabla F(u)
\label{eq:full-newton}
\end{equation}
also remains in the domain whenever $\lambda(u)<1$.
\end{lemma}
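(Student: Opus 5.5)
The plan is to reduce both claims to the Dikin-ellipsoid inclusion $\mathcal E_u\subseteq\operatorname{dom}F=\operatorname{int}(R\mathbb B_2)$ recorded above. By that inclusion it suffices to show that the Newton displacement has local norm strictly less than one. Write $n(u):=-[\nabla^2F(u)]^{-1}\nabla F(u)$ for the Newton direction. Its local norm is
\[
\|n(u)\|_u^2=\nabla F(u)^\top H(u)^{-1}H(u)H(u)^{-1}\nabla F(u)=\lambda(u)^2,
\]
so $\|n(u)\|_u=\lambda(u)$.

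For the damped step, the displacement is $h=n(u)/(1+\lambda(u))$. Its local norm is
\[
\|h\|_u=\frac{\lambda(u)}{1+\lambda(u)}<1
\]
for every $\lambda(u)\geq0$. Hence $u^+=u+h\in\mathcal E_u\subseteq\operatorname{int}(R\mathbb B_2)$, with no condition on $\lambda(u)$. For the full step, $h=n(u)$ and $\|h\|_u=\lambda(u)$, which is below one exactly under the hypothesis $\lambda(u)<1$. The same inclusion then gives $u^+\in\operatorname{int}(R\mathbb B_2)$.

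The only substantive ingredient is the Dikin inclusion itself, and I would justify it as follows. The paper treats it as a standard fact, but $F$ is only self-concordant, with constant $1$ when $\nu\geq1$, and not necessarily a barrier once the linear and quadratic terms are added. I would note three points. First, $-\nu\log(R^2-\|u\|_2^2)$ is $\nu$-times a standard self-concordant barrier, and for $\nu\geq1$ it remains self-concordant with constant at most $1$. Second, adding convex quadratics and linear terms preserves self-concordance, and it preserves the domain. Third, the standard result from \citet{nemirovski2008interior} states that for any self-concordant function that diverges at the boundary of its open domain, the open unit Dikin ellipsoid lies in the domain.

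If a self-contained argument is preferred, the main obstacle is showing that the ellipsoid stays inside the ball. I would do this directly. Since $H(u)\succeq\nabla^2[-\nu\log(R^2-\|u\|^2)]$, the $F$-Dikin ellipsoid is contained in the barrier's Dikin ellipsoid. It therefore suffices to check the claim for $g(u)=-\log(R^2-\|u\|_2^2)$, using $\nu\geq1$ to dominate. For that barrier, the Hessian in direction $h$ is at least $\frac{2\|h\|^2}{R^2-\|u\|^2}+\frac{4\langle u,h\rangle^2}{(R^2-\|u\|^2)^2}$. A one-line check then shows that $\|h\|_u<1$ forces $\|u+h\|_2<R$.
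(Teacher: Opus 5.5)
Your proposal is correct and follows the paper's proof: both compute the local norm of the Newton displacement, which is $\lambda(u)/(1+\lambda(u))<1$ for the damped step and $\lambda(u)<1$ for the full step, and then invoke the Dikin inclusion $\mathcal E_u\subseteq\operatorname{dom}F$. Your optional self-contained check of that inclusion is also sound and makes explicit what the paper only cites: reduce to the unscaled barrier using $\nu\geq1$, set $D=R^2-\|u\|_2^2$, $a=\langle u,h\rangle/D$, $b=\|h\|_2^2/D$, and note that $2b+4a^2<1$ implies $2a+b<1-2(a-\tfrac12)^2\leq1$.
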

\end{samepage}

\begin{proof}
If $d$ is the damped displacement, then
\[
 \|d\|_{\nabla^2F(u)}
 =\frac{\lambda(u)}{1+\lambda(u)}<1.
\]
Hence $u+d\in\mathcal E_u\subseteq\operatorname{dom}F$.  For the full step,
the displacement has local norm $\lambda(u)<1$, and the same argument
applies.
\end{proof}

\begin{lemma}[Euclidean accuracy from the decrement]
\label{lem:decrement-certificate}
If $u$ is interior and $\lambda(u)\leq\delta<1$, then
\[
 \|u-u^{\star}\|_2
 \leq\frac{\delta}{(1-\delta)\sqrt{m}}.
\]
In particular, setting $\delta=\tau_\varepsilon$ from
\eqref{eq:solver-stopping} gives
\begin{equation}
 \|u^+-u^{\star}\|_2\leq\varepsilon.
\label{eq:solver-distance}
\end{equation}
\end{lemma}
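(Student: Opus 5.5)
The plan is to work in the local Hessian norm first and then pass to the Euclidean norm using the strong convexity bound $\nabla^2F\succeq mI$.

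\textbf{Step 1: proximity in the local norm.} Since $\lambda(u)\le\delta<1$, the self-concordant proximity estimate \eqref{eq:sc-proximity} gives
\[
\|u-u^\star\|_u\le\frac{\lambda(u)}{1-\lambda(u)}\le\frac{\delta}{1-\delta},
\]
because $x\mapsto x/(1-x)$ is increasing on $[0,1)$.

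\textbf{Step 2: conversion to the Euclidean norm.} Recall $H(u)=\nabla^2F(u)\succeq mI$ with $m=\mu+2\nu/R^2$. This holds because the Hessian of $-\nu\log(R^2-\|u\|_2^2)$ equals
\[
\frac{2\nu}{R^2-\|u\|^2}\,I+\frac{4\nu\,uu^\top}{(R^2-\|u\|^2)^2}\succeq\frac{2\nu}{R^2}\,I,
\]
and the quadratic term contributes $\mu I$. It follows that $\|h\|_u\ge\sqrt m\,\|h\|_2$ for every $h$. Applying this with $h=u-u^\star$ yields
\[
\|u-u^\star\|_2\le\frac{\delta}{(1-\delta)\sqrt m}.
\]

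\textbf{Step 3: the specialization.} Take $\delta=\tau_\varepsilon=\sqrt m\varepsilon/(1+\sqrt m\varepsilon)$. This value lies in $(0,1)$, and $1-\tau_\varepsilon=1/(1+\sqrt m\varepsilon)$, so $\tau_\varepsilon/(1-\tau_\varepsilon)=\sqrt m\,\varepsilon$. Dividing by $\sqrt m$ gives $\|u^+-u^\star\|_2\le\varepsilon$. The hypothesis applies to $u^+$ because the returned point satisfies the stopping test $\lambda(u^+)\le\tau_\varepsilon$, and it is interior by Lemma~\ref{lem:dikin-safety}.

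\textbf{Main obstacle.} The only delicate point is justifying \eqref{eq:sc-proximity}, which depends on self-concordance with the standard constant. The log-barrier $-\log(R^2-\|u\|^2)$ is standard self-concordant, and multiplying by $\nu\ge1$ preserves this, as the paper records. If the cited form is in doubt, I would derive it directly. For $\lambda<1$, the self-concordant lower bound $F(u+h)\ge F(u)+\langle\nabla F(u),h\rangle+\omega(\|h\|_u)$, with $\omega(t)=t-\log(1+t)$, shows that the minimizer lies within local distance $\lambda/(1-\lambda)$.
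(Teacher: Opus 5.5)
Your proof is correct and follows the same argument as the paper: apply \eqref{eq:sc-proximity} with the monotonicity of $x/(1-x)$, pass to the Euclidean norm via $\nabla^2F(u)\succeq mI$, and use $\tau_\varepsilon/(1-\tau_\varepsilon)=\sqrt m\,\varepsilon$. One caution about your fallback: with $r=\|u^\star-u\|_u$, the function-value bound $F(u+h)\ge F(u)+\langle\nabla F(u),h\rangle+\omega(\|h\|_u)$ only yields $(1-\lambda)r\le\log(1+r)$, which is strictly weaker than $r\le\lambda/(1-\lambda)$ (for example, $\lambda=1/2$ still allows $r\approx 2.5$), so to rederive \eqref{eq:sc-proximity} you should instead use the gradient inequality $\langle\nabla F(u^\star)-\nabla F(u),u^\star-u\rangle\ge r^2/(1+r)$, which together with $\nabla F(u^\star)=0$ gives $r/(1+r)\le\lambda$.
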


\begin{proof}
The bound \eqref{eq:sc-proximity} and monotonicity of $x/(1-x)$ on
$[0,1)$ give
$\|u-u^{\star}\|_{\nabla^2F(u)}\leq\delta/(1-\delta)$.
Since $\nabla^2F(u)\succeq m I$, this implies the displayed
Euclidean bound.  Substitution of \eqref{eq:solver-stopping} gives
\eqref{eq:solver-distance}.
\end{proof}

\begin{proposition}[Guaranteed finite termination]
\label{prop:hybrid-newton-guarantee}
For every potential $F$ of the form \eqref{eq:generic-potential}, every
interior initialization $u_0$, and every accuracy $\varepsilon>0$,
Algorithm~\ref{alg:hybrid-newton} is well defined and terminates after
finitely many Newton updates. Every iterate remains in
$\operatorname{int}(R\mathbb B_2)$, and its output satisfies
\[
 \lambda(u^+)\leq\tau_\varepsilon,
 \qquad
 \|u^+-u^\star\|_2\leq\varepsilon.
\]
\end{proposition}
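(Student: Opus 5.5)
The proof has two parts. First we show that every iterate stays in the domain, so the algorithm is well defined. Then we show that each of the two while-loops runs only finitely many times. The output guarantee then follows from the exit condition of the second loop together with Lemma~\ref{lem:decrement-certificate}.

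\textbf{Well-definedness.} We argue by induction on the iterates. Start from an interior $u_0$. Because $\nabla^2F(u)\succeq mI$, the Hessian is invertible at every interior point, so $\lambda(u)$ and the Newton direction are defined. By Lemma~\ref{lem:dikin-safety}, each damped step keeps the iterate interior. Each full step is taken only when $\lambda(u)\le 1/4<1$, so it also stays interior. That $\lambda$ remains at most $1/4$ throughout the second loop is verified below.

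\textbf{First (damped) loop.} As long as $\lambda(u)>1/4$, inequality \eqref{eq:sc-damped-decrease} gives a decrease of at least
\[
c:=\tfrac14-\log\tfrac54>0,
\]
using that $x\mapsto x-\log(1+x)$ is increasing on $[0,\infty)$. Since $F$ is bounded below by $F(u^\star)$, the number of damped steps is at most $(F(u_0)-F(u^\star))/c<\infty$. This quantity is finite because $u_0$ is interior.

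\textbf{Second (full) loop.} On entry, $\lambda(u)\le 1/4$. By \eqref{eq:sc-full-contraction},
\[
\lambda(u^+)\le\bigl(\lambda/(1-\lambda)\bigr)^2\le \tfrac{16}{9}\lambda^2\le \tfrac49\lambda .
\]
Hence $\lambda$ stays at most $1/4$, which keeps the full steps admissible. Moreover, $\lambda$ converges to zero quadratically, with $\lambda_k\le \tfrac{9}{16}\bigl(\tfrac{16}{9}\cdot\tfrac14\bigr)^{2^k}$. We need termination only when $\tau_\varepsilon<1/4$; if $\tau_\varepsilon\ge 1/4$, the loop is skipped entirely. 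Since $\tau_\varepsilon>0$ for every $\varepsilon>0$, the condition $\lambda\le\tau_\varepsilon$ is reached after $O(\log\log(1/\tau_\varepsilon))$ steps.

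\textbf{Output guarantee.} On exit, $\lambda(u^+)\le\tau_\varepsilon<1$. Lemma~\ref{lem:decrement-certificate} with $\delta=\tau_\varepsilon$ then gives $\|u^+-u^\star\|_2\le\varepsilon$.

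The main obstacle is the second loop. There we must ensure that the full-step contraction never lets $\lambda$ leave the region $\lambda<1$ where \eqref{eq:sc-full-contraction} and Lemma~\ref{lem:dikin-safety} apply. The threshold $1/4$ resolves this, because the ratio $\lambda/(1-\lambda)^2\le 16/9\cdot 1/4<1$ makes $\lambda$ monotonically decreasing.
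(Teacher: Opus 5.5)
Your proposal is correct and follows the paper's proof essentially step for step. The damped phase ends because each step with $\lambda(u)>1/4$ lowers $F$ by at least $\tfrac14-\log\tfrac54$ while $F\geq F(u^\star)$; the full-step phase ends by quadratic contraction of the decrement from $\lambda\leq 1/4$, after which Lemma~\ref{lem:decrement-certificate} gives the Euclidean bound. The only differences are cosmetic. You use the constant $16/9$ where the paper uses $2$ in $(\lambda/(1-\lambda))^2\leq C\lambda^2$. You also state explicitly, via $\lambda^+\leq\tfrac49\lambda$, the invariant $\lambda\leq 1/4$ that the paper leaves implicit in its bound $2\lambda_k\leq 2^{-2^k}$.
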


\begin{proof}
Let $\omega(s):=s-\log(1+s)$. During the first loop,
Lemma~\ref{lem:dikin-safety} makes every damped update well defined and
keeps the new iterate interior. By \eqref{eq:sc-damped-decrease}, each such
update with $\lambda(u)>1/4$ decreases $F$ by at least
$\omega(1/4)>0$. Since $F$ is bounded below by $F(u^\star)$, the first
loop terminates after finitely many updates with $\lambda(u)\leq1/4$.

If $\tau_\varepsilon\geq1/4$, the second loop is skipped. Otherwise, every
full update in that loop remains interior by Lemma~\ref{lem:dikin-safety},
and \eqref{eq:sc-full-contraction} yields
\[
 \lambda(u^+)\leq
 \left(\frac{\lambda(u)}{1-\lambda(u)}\right)^2
 \leq 2\lambda(u)^2
 \qquad\text{whenever }\lambda(u)\leq\frac14.
\]
Writing $b_k:=2\lambda(u_k)$ for the full-step iterates gives
$b_{k+1}\leq b_k^2$ and $b_0\leq1/2$, hence
$b_k\leq2^{-2^k}$. Therefore the positive threshold
$\tau_\varepsilon$ is reached after finitely many full steps. The stopping
test gives $\lambda(u^+)\leq\tau_\varepsilon$, and
Lemma~\ref{lem:decrement-certificate} gives
$\|u^+-u^\star\|_2\leq\varepsilon$.
\end{proof}

For the calls made by \algname{}, take
$F=\Psi_{a,t}$, $\nu=1$, $\mu=\mu_T$, $m=m_T$,
$\theta=\theta_{a,t}$, $u_0=u_{a,t-1}$, and $\varepsilon=R/T$ for
$a\in[N]$ and $t=2,\ldots,T$. The convention
$\varepsilon_{a,1}=0$ is only bookkeeping: no solver call is made at
$t=1$, where $u_{a,1}=0$ is the exact minimizer of the initial potential.
Thus each agent invokes \solvername{} exactly $T-1$ times.

\begin{lemma}[Structured Newton direction]
\label{lem:structured-hessian}
For $u\in\operatorname{int}(R\mathbb B_2)$, set
\[
 D(u):=R^2-\|u\|_2^2,
 a(u):=\mu+\frac{2\nu}{D(u)},
 b(u):=\frac{4\nu}{D(u)^2}.
\]
Then
\[
 \nabla F(u)=a(u)u+\theta,\qquad
 \nabla^2F(u)=a(u)I+b(u)uu^\top,
\]
and, for every $v\in\mathbb R^d$,
\[
 [\nabla^2F(u)]^{-1}v
 =\frac{v}{a(u)}
 -\frac{b(u)u\langle u,v\rangle}
 {a(u)\bigl(a(u)+b(u)\|u\|_2^2\bigr)}.
\]
Taking $v=\nabla F(u)$, both the Newton direction and
$\lambda(u)^2=\nabla F(u)^\top[\nabla^2F(u)]^{-1}\nabla F(u)$ use
$O(d)$ arithmetic and $O(d)$ working memory. Thus one exact
\solvername{} iteration has those costs.
\end{lemma}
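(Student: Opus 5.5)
We will verify the gradient and Hessian formulas by direct differentiation, then obtain the inverse from the Sherman--Morrison formula. The cost claims then follow by counting operations.

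\textbf{Gradient and Hessian.} Write $D(u)=R^2-\|u\|_2^2$, so that $\nabla D(u)=-2u$. For the barrier term we get
\[
\nabla\bigl(-\nu\log D(u)\bigr)=-\nu\,\frac{\nabla D(u)}{D(u)}=\frac{2\nu u}{D(u)}.
\]
Adding $\mu u$ from the quadratic term and $\theta$ from the linear term gives $\nabla F(u)=a(u)u+\theta$. Next we differentiate $u\mapsto \frac{2\nu}{D(u)}u$. The product rule gives
\[
\frac{2\nu}{D(u)}I+u\left(\nabla\frac{2\nu}{D(u)}\right)^{\!\top},
\qquad
\nabla\frac{2\nu}{D(u)}=\frac{4\nu u}{D(u)^2}.
\]
Hence $\nabla^2F(u)=a(u)I+b(u)uu^\top$. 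Because $a(u)>0$ and $b(u)\ge 0$ on the interior, this matrix is positive definite.

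\textbf{Inverse.} We apply Sherman--Morrison to $A+cc^\top$ with $A=a(u)I$ and $c=\sqrt{b(u)}\,u$:
\[
(A+cc^\top)^{-1}=A^{-1}-\frac{A^{-1}cc^\top A^{-1}}{1+c^\top A^{-1}c}.
\]
Here $A^{-1}cc^\top A^{-1}=b(u)uu^\top/a(u)^2$ and $1+c^\top A^{-1}c=\bigl(a(u)+b(u)\|u\|_2^2\bigr)/a(u)$. The correction term therefore equals $b(u)uu^\top/\bigl(a(u)(a(u)+b(u)\|u\|_2^2)\bigr)$, which is the stated formula. As a check, we can instead multiply $a(u)I+b(u)uu^\top$ by the claimed expression for $[\nabla^2F(u)]^{-1}v$ and confirm that the $uu^\top v$ terms cancel, leaving $v$.

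\textbf{Cost.} Each iteration proceeds in the following order, and every step is $O(d)$ in arithmetic and memory.
\begin{enumerate}
\item Compute $\|u\|_2^2$, then the scalars $D(u)$, $a(u)$ and $b(u)$.
\item Form $g=\nabla F(u)=a(u)u+\theta$.
\item Compute the inner product $\langle u,g\rangle$.
\item Form $H^{-1}g$ as a linear combination of the two vectors $g$ and $u$ with scalar coefficients.
\item Compute $\lambda(u)^2=\langle g,H^{-1}g\rangle$ as one more inner product.
\item Perform the update (damped or full step), which is a vector axpy.
\end{enumerate}
No $d\times d$ matrix is ever formed, so only a constant number of $d$-vectors are stored.

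There is no real obstacle in this argument. The only point needing care is the sign and constant in the Hessian's rank-one term (the factor $4\nu/D^2$), because an error there would break the Sherman--Morrison simplification.
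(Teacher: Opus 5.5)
Your proposal is correct and follows the same route as the paper. You differentiate the potential to get $\nabla F(u)=a(u)u+\theta$ and $\nabla^2F(u)=a(u)I+b(u)uu^\top$, apply Sherman--Morrison for the inverse, and note that no $d\times d$ matrix is formed; you also write out the differentiation, the rank-one bookkeeping, and the $O(d)$ operation count that the paper's two-line proof leaves implicit.
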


\begin{proof}
Differentiate the potential in \eqref{eq:generic-potential}.  The inverse
formula is the Sherman-Morrison identity applied to
$a(u)I+b(u)uu^\top$.
\end{proof}

%================================================================

\section{Proof of Lemma~\ref{lem:gauge-surrogate}}
\begin{proof}[Proof of Lemma~\ref{lem:gauge-surrogate}]
Suppress \(j,t\), and write \(u,S,s,p,\ell\) for the corresponding
quantities. Lemma~\ref{lem:gauge-properties} and~\ref{lem:gauge-proj-prop} give, by positive homogeneity,
\[
    \gamma_{\mathcal C}(p)
    =\frac{\gamma_{\mathcal C}(u)}{1+S}
    \leq
    \frac{1+S_{\mathcal C}(u)}{1+S}
    \leq1.
\]
Thus \(p\in\mathcal C\), \(w=x_0+p\in\mathcal K\), and, by Assumption~\ref{ass:geometry}, \(\|p\|_2\leq R\). 
Moreover, \(\|\ell\|_2=\|q\|_2\leq G\) by Assumption~\ref{ass:feedback}. 
Let \(\chi:=\mathbf1\{\langle\ell,u\rangle<0\}.\)
Because \(p\) is a positive multiple of \(u\), \(-\chi\langle\ell,p\rangle\geq0\). 
Hence
\[
\begin{aligned}
    \|\widetilde\ell\|_2
    \leq
    \|\ell\|_2+
    |\langle\ell,p\rangle|\|s\|_2 
    \leq
    G+\frac{GR}{r}
    \leq2\kappa G,
\end{aligned}
\]
where \(\|s\|_2\leq1/r\) follows from Lemma~\ref{lem:gauge-proj-prop}, and \(r\leq R\) follows from
Assumption~\ref{ass:geometry}.

Define \(L(z):= \langle\ell,z\rangle -\chi\langle\ell,p\rangle S_{\mathcal C}(z).\)
Let \(v\in\mathcal C\). 
Lemma~\ref{lem:gauge-proj-prop} gives, with \(\epsilon_{\rm gau}=1/T\),
\[S_{\mathcal C}(u) - S_{\mathcal C}(v) \leq  \langle s,u-v\rangle +\frac 1T.\]
Thus,
\begin{align}
    L(u)-L(v) &= \langle l,u-v\rangle - \chi \langle l,p\rangle \left(S_{\mathcal C}(u) - S_{\mathcal C}(v)\right) \nonumber \\
    & \leq \langle l,u-v\rangle - \chi \langle l,p\rangle \left(\langle s,u-v\rangle +\frac 1T \right) \nonumber \\
    & = \langle l- \chi \langle l,p\rangle s,u-v\rangle - \frac {\chi \langle l,p\rangle}T \nonumber \\
    & = \langle \widetilde l,u-v\rangle + \frac {|\langle l,p\rangle | }T,
    \label{eq:gauge-surrogate-upper}
\end{align}
where the last equality is due to \(-\chi\langle\ell,p\rangle = | \langle\ell,p\rangle |\) as the result of \(-\chi\langle\ell,p\rangle\geq0\).
% Since \(S_{\mathcal C}(v)=0\) for \(v\in\mathcal C\), the approximate
% subgradient inequality for $s$ in Lemma~\ref{lem:gauge-proj-prop} implies
% \begin{equation}
%     L(u)-L(v)
%     \leq
%     \langle\widetilde\ell,u-v\rangle
%     +\frac{|\langle\ell,p\rangle|}{T}.
%     \label{eq:gauge-surrogate-upper}
% \end{equation}

Separately, we also have 
\begin{equation}
    L(u)\geq\langle\ell,p\rangle-\frac{GR}{T}.
    \label{eq:gauge-surrogate-lower}
\end{equation}
because if \(S=0\), then \(p=u\), and Lemma~\ref{lem:gauge-proj-prop} gives \( S_{\mathcal C}(u) = 0 \) and \( L(u)=\langle l, u\rangle = \langle l, p\rangle \). If \(S>0\) and \(\chi=0\), then \(\langle\ell,u\rangle\geq0\) and \(\langle\ell,p\rangle\leq L(u)\). If \(S>0\) and \(\chi=1\), then \(L(u)-\langle\ell,p\rangle = \langle\ell,p\rangle \bigl(S-S_{\mathcal C}(u)\bigr) \geq-\frac{GR}{T}.\)

For \(v\in\mathcal C\), we have \(S_{\mathcal C}(v)=0\), and thus \(L(v)=\langle l, v \rangle.\) Therefore,
combining \eqref{eq:gauge-surrogate-upper} and \eqref{eq:gauge-surrogate-lower}, and using \(w=x_0+p\), we obtain
\begin{align*}
\langle \ell, w-(x_0+v)\rangle
&= \langle \ell,p-v\rangle 
= \langle \ell,p\rangle-\langle \ell,v\rangle \notag\\
&\le L(u)-L(v)+\frac{GR}{T} \notag\\
&\le \langle \widetilde{\ell},u-v\rangle +\frac{|\langle \ell,p\rangle|}{T} +\frac{GR}{T} \notag\\
&\le \langle \widetilde{\ell},u-v\rangle+\frac{2GR}{T}.
\end{align*}
where the first inequality comes from \eqref{eq:gauge-surrogate-lower}, the second inequality comes from \eqref{eq:gauge-surrogate-upper}, and the last inequality uses Cauchy-Schwarz and the fact that $p\in C\subseteq RB_2$ and
$\|\ell\|_2\le G$. 
This yields the final bound in Lemma~\ref{lem:gauge-surrogate}.
\end{proof}

%================================================================

\section{Proof of Lemma~\ref{lem:row-mixing}}
\begin{proof}[Proof of Lemma~\ref{lem:row-mixing}]
Recall \(J=\mathbf1\mathbf1^\top/N\).
By Assumption~\ref{ass:network}, the \(a\)-th rows of \(W^k\) and \(J\) are probability vectors, so their \(\ell_1\)-distance is at most \(2\).
Also, by Cauchy--Schwarz,
\[
    \sum_{b=1}^N
    \left|
    (W^k)_{ab}-\frac1N
    \right|
    \leq
    \sqrt N
    \left(
    \sum_{b=1}^N
    \left|
    (W^k)_{ab}-\frac1N
    \right|^2
    \right)^{1/2}
    \leq
    \sqrt N\,\|W^k-J\|_2.
\] 
For \(k=0\),
\(\|W^0-J\|_2\leq1\). For \(k\geq1\), double stochasticity gives
\(WJ=JW=J\), and hence
\[
    W^k-J=(W-J)^k,
    \qquad
    \|W^k-J\|_2\leq\rho^k.
\]
Combining the two conditions of $k$,
\[
\sum_{b=1}^N
    \left|
    (W^k)_{ab}-\frac1N
    \right|
    \leq
    \min\{2,\sqrt N\,\rho^k\}
\]

For \(0<\rho<1\), split the series in
\eqref{eq:row-mixing-coefficient} at \(k_0:=
    \left\lceil
    \frac{[\log(\sqrt N/2)]_+}{-\log\rho}
    \right\rceil\) to obtain
\[
    M_a(W)
    \leq
    2k_0+
    \sqrt N\sum_{k=k_0}^{\infty}\rho^k
    \leq
    2k_0+\frac2{1-\rho}.
\]
If \(\rho=0\), then \(W=J\), so only \(k=0\) contributes and
\[
    M_a(W)
    =
    \sum_{b=1}^N
    \left|
    \mathbf 1\{a=b\}-\frac1N
    \right|
    =
    2\left(1-\frac1N\right).
\]
Finally, \(-\log\rho\geq1-\rho\) for \(0<\rho<1\), which gives the
stated order bound together with the separate \(\rho=0\) case.
\end{proof}

%================================================================

\section{Proof of Lemma~\ref{lem:dual-consensus}}
\begin{proof}[Proof of Lemma~\ref{lem:dual-consensus}]

Double stochasticity in Assumption~\ref{ass:network} and
\eqref{eq:theta-update} give
\(\bar\theta_{t+1}=\bar\theta_t+\bar\ell_t\), and given the zero initialization, by recursion we have \eqref{eq:exact-average-state}.

Stack $\theta_{b,t}^{\top}$ as the rows of $\boldsymbol{\Theta}_t$, and stack $\widetilde{\ell}_{b,t}^{\top}$ as the rows of $\boldsymbol{L}_t$. The agent-wise update in \eqref{eq:theta-update} is equivalently
\[
    \boldsymbol{\Theta}_{t+1}
    =W\boldsymbol{\Theta}_t+\boldsymbol{L}_t.
\]
Moreover, $J\boldsymbol{\Theta}_t$ has every row equal to $\bar{\theta}_t^{\top}$, and hence
\[
    \|(I-J)\boldsymbol{\Theta}_t\|_{\mathrm F} = \left( \sum_{b=1}^N  \|\theta_{b,t} -\bar{\theta}_t\|_2^2 \right)^{1/2}.
\]
Since $JW=WJ=J$ and $J^2=J$, we have
\begin{align*}
    (I-J)\boldsymbol{\Theta}_{t+1}
    &=(I-J)W\boldsymbol{\Theta}_t+(I-J)\boldsymbol{L}_t  \\
    &=(W-J)(I-J)\boldsymbol{\Theta}_t  +    (I-J)\boldsymbol{L}_t.
\end{align*}
Therefore, using $\|W-J\|_2=\rho$, $\|I-J\|_2=1$, and since Lemma~\ref{lem:gauge-surrogate} gives
$\|\boldsymbol{L}_t\|_{\mathrm F}\le \sqrt{N}\,\widetilde G$,
\[
    \|(I-J)\boldsymbol{\Theta}_{t+1}\|_{\mathrm F}
    \le
    \rho\|(I-J)\boldsymbol{\Theta}_t\|_{\mathrm F}
    +\sqrt{N}\,\widetilde G.
\]
Since $\boldsymbol{\Theta}_1=0$, unrolling this recursion gives
\begin{align*}
    \|(I-J)\boldsymbol{\Theta}_t\|_{\mathrm F}
    &\le \sqrt{N}\,\widetilde G \sum_{k=0}^{t-2}\rho^k\\
    &= \frac{\sqrt{N}\,\widetilde G(1-\rho^{t-1})}{1-\rho}
    = \frac{\sqrt{N}\,\widetilde G(1-\rho^{t-1})}{\delta},
\end{align*}
which proves \eqref{eq:theta-consensus}.

Similarly, unrolling $\boldsymbol{\Theta}_{t+1} = W\boldsymbol{\Theta}_t + \boldsymbol{L}_t$ from $\boldsymbol{\Theta}_1=0$ gives
\[
    \boldsymbol{\Theta}_t = \sum_{s=1}^{t-1} W^{t-1-s}\boldsymbol{L}_s.
\]
Since $JW^k=J$ for every $k\ge 0$,
\[
    J\boldsymbol{\Theta}_t
    = \sum_{s=1}^{t-1} J\boldsymbol{L}_s.
\]
The $a$-th row of $\boldsymbol{\Theta}_t-J\boldsymbol{\Theta}_t$
therefore yields
\[
    \theta_{a,t}-\bar{\theta}_t
    =
    \sum_{s=1}^{t-1}\sum_{b=1}^N
    \left(
        (W^{t-1-s})_{ab}-\frac1N
    \right)
    \widetilde{\boldsymbol{\ell}}_{b,s}.
\]
Using $\|\widetilde{\boldsymbol{\ell}}_{b,s}\|_2
\le \widetilde G$, we obtain
\begin{align*}
\sum_{t=1}^T d_{a,t}
= \sum_{t=1}^T \|\theta_{a,t} -\bar{\theta}_t\|_2
&\le
\widetilde G
\sum_{t=2}^T
\sum_{s=1}^{t-1}
\sum_{b=1}^N
\left|
    (W^{t-1-s})_{ab}-\frac1N
\right|\\
&=
\widetilde G
\sum_{t=2}^T
\sum_{k=0}^{t-2}
\sum_{b=1}^N
\left|
    (W^k)_{ab}-\frac1N
\right|\\
&=
\widetilde G
\sum_{k=0}^{T-2}
(T-1-k)
\sum_{b=1}^N
\left|
    (W^k)_{ab}-\frac1N
\right|\\
&\le
T\widetilde G
\sum_{k=0}^{\infty}
\sum_{b=1}^N
\left|
    (W^k)_{ab}-\frac1N
\right|\\
&=
T\widetilde G\,M_a(W).
\end{align*}
where the second equality uses the change of variables
$k=t-1-s$, and the third equality changes the order of summation:
for each fixed $k\in\{0,\ldots,T-2\}$, the corresponding term
appears for $t=k+2,\ldots,T$, hence exactly $T-1-k$ times.
The last equality follows from the definition of $M_a(W)$.
This  proves \eqref{eq:row-state-sum}.

By Cauchy-Schwarz,
\begin{align*}
\frac{1}{N}\sum_{b=1}^N d_{b,t}
&\le \frac{1}{N}\sqrt{N} \left(\sum_{b=1}^N d_{b,t}^2\right)^{1/2}\\
&= \frac{1}{\sqrt{N}} \left(\sum_{b=1}^N \|\boldsymbol{\theta}_{b,t} -\bar{\boldsymbol{\theta}}_t\|_2^2\right)^{1/2}\\
&\le \frac{\widetilde G(1-\rho^{t-1})}{\delta},
\end{align*}
where the last inequality follows from \eqref{eq:theta-consensus}.
Summing over $t=1,\ldots,T$ therefore gives
\begin{align*}
\sum_{t=1}^T\frac{1}{N}\sum_{b=1}^N d_{b,t}
&\le \frac{\widetilde G}{\delta} \sum_{t=1}^T(1-\rho^{t-1})\\
&\le \frac{T\widetilde G}{\delta},
\end{align*}
where the last inequality uses $0\le 1-\rho^{t-1}\le 1$. This proves \eqref{eq:average-state-sum}.
\end{proof}

%================================================================

\section{Proof of Lemma~\ref{lem:minimizer-disagreement}}

\begin{proof}[Proof of Lemma~\ref{lem:minimizer-disagreement}]
The local and network-average potentials differ only in their linear
terms. In particular, writing
\(
    \Phi_T(u):=
    -\nu\log\!\left(R^2-\|u\|_2^2\right)
    +\frac{\mu_T}{2}\|u\|_2^2,
\)
we have
\[
    \Psi_{a,t}(u)
    =
    \Phi_T(u)+\langle\boldsymbol{\theta}_{a,t},u\rangle,
    \qquad
    \bar{\Psi}_t(u)
    =
    \Phi_T(u)+\langle\bar{\boldsymbol{\theta}}_t,u\rangle.
\]
Both potentials satisfy
$\nabla^2\Psi_{a,t}(u)
=\nabla^2\bar{\Psi}_t(u)\succeq m_T I$.
Hence, $\nabla\Psi_{a,t}$ is $m_T$-strongly monotone, and therefore
\begin{align*}
m_T
\|\bar{u}_t^\star-u_{a,t}^\star\|_2^2
&\le
\left\langle
\nabla\Psi_{a,t}(\bar{u}_t^\star)
-\nabla\Psi_{a,t}(u_{a,t}^\star),
\bar{u}_t^\star-u_{a,t}^\star
\right\rangle.
\end{align*}
Since both minimizers are interior, their respective first-order
conditions give
\[
    \nabla\Psi_{a,t}(u_{a,t}^\star)=0,
    \qquad
    \nabla\bar{\Psi}_t(\bar{u}_t^\star)=0.
\]
Moreover, because the two potentials differ only in their linear terms,
\[
    \nabla\Psi_{a,t}(\bar{u}_t^\star)
    =
    \nabla\bar{\Psi}_t(\bar{u}_t^\star)
    +\boldsymbol{\theta}_{a,t}
    -\bar{\boldsymbol{\theta}}_t
    =
    \boldsymbol{\theta}_{a,t}
    -\bar{\boldsymbol{\theta}}_t.
\]
By Cauchy-Schwarz,
\begin{align*}
m_T
\|\bar{u}_t^\star-u_{a,t}^\star\|_2^2
&\le
\left\langle
\boldsymbol{\theta}_{a,t}
-\bar{\boldsymbol{\theta}}_t,
\bar{u}_t^\star-u_{a,t}^\star
\right\rangle\\
&\le
\|\boldsymbol{\theta}_{a,t}
-\bar{\boldsymbol{\theta}}_t\|_2
\|\bar{u}_t^\star-u_{a,t}^\star\|_2.
\end{align*}
If $\bar{u}_t^\star=u_{a,t}^\star$, the claim is immediate;
otherwise, dividing by
$m_T\|\bar{u}_t^\star-u_{a,t}^\star\|_2$ gives
\[
    \|u_{a,t}^\star-\bar{u}_t^\star\|_2
    \le
    \frac{\|\boldsymbol{\theta}_{a,t}
    -\bar{\boldsymbol{\theta}}_t\|_2}{m_T}
    =
    \frac{d_{a,t}}{m_T}.
\]
\end{proof}

\section{Proof of Lemma~\ref{lem:average-bftrl}}

\begin{proof}[Proof of Lemma~\ref{lem:average-bftrl}]
Equation~\eqref{eq:exact-average-state} gives \(\bar\theta_t = \sum_{s=1}^{t-1}\bar\ell_s,\) and in particular $\bar\theta_1=0$. Therefore,
by \eqref{eq:average-potential},
\[
    \bar\Psi_t(u)
    =
    \bar\Psi_1(u)
    +
    \sum_{s=1}^{t-1}\langle\bar\ell_s,u\rangle.
\]
Hence,
\[
    \bar u_t^\star
    =
    \arg\min_{u\in\operatorname{int}(R\mathbb B_2)}
    \left\{
        \bar\Psi_1(u)
        +
        \sum_{s=1}^{t-1}\langle\bar\ell_s,u\rangle
    \right\},
\]
so $\bar u_t^\star$ is precisely the FTRL iterate generated by the
regularizer $\bar\Psi_1$ and the past vectors
$\bar\ell_1,\ldots,\bar\ell_{t-1}$. Since
$\bar\theta_1=0$, the symmetry of $\bar\Psi_1$ also gives
$\bar u_1^\star=0$.

The standard be-the-leader inequality for FTRL \citep{hazan2016introduction} states that, for $f_0,\ldots,f_T$ and $x_{t+1}\in\arg\min_x\sum_{s=0}^t f_s(x)$, we have 
$\sum_{t=0}^T f_t(x_{t+1}) \le \sum_{t=0}^T f_t(x)$ for every $x$. 
Applying it here with $f_0=\bar\Psi_1$ and $f_t(u)=\langle\bar\ell_t,u\rangle$ for $t\ge 1$, and using $\bar u_1^\star=0$, gives
\begin{equation}
    \sum_{t=1}^T \langle\bar\ell_t,\bar u_{t+1}^\star-v\rangle
    \le \bar\Psi_1(v)-\bar\Psi_1(\bar u_1^\star)
    = \bar\Psi_1(v)-\bar\Psi_1(0).
    \label{eq:btl-average}
\end{equation}

It remains to bound the difference between two consecutive FTRL minimizers. Since
\[
    \bar\Psi_{t+1}(u)
    =
    \bar\Psi_t(u)+\langle\bar\ell_t,u\rangle,
\]
their gradients satisfy
\[
    \nabla\bar\Psi_{t+1}(u)
    =
    \nabla\bar\Psi_t(u)+\bar\ell_t.
\]
The first-order conditions at the two minimizers therefore give
\[
    \nabla\bar\Psi_t(\bar u_t^\star)=0,
    \qquad
    \nabla\bar\Psi_t(\bar u_{t+1}^\star)
    =
    -\bar\ell_t.
\]
Since $\bar\Psi_t$ is $m_T$-strongly convex,
$\nabla\bar\Psi_t$ is $m_T$-strongly monotone. Thus,
\begin{align*}
m_T\|\bar u_t^\star-\bar u_{t+1}^\star\|_2^2
&\le
\left\langle
    \nabla\bar\Psi_t(\bar u_t^\star)
    -
    \nabla\bar\Psi_t(\bar u_{t+1}^\star),
    \bar u_t^\star-\bar u_{t+1}^\star
\right\rangle\\
&=
\left\langle
    \bar\ell_t,
    \bar u_t^\star-\bar u_{t+1}^\star
\right\rangle\\
&\le
\|\bar\ell_t\|_2
\|\bar u_t^\star-\bar u_{t+1}^\star\|_2.
\end{align*}
Therefore,
\[
    \|\bar u_t^\star-\bar u_{t+1}^\star\|_2
    \le
    \frac{\|\bar\ell_t\|_2}{m_T}.
\]
Moreover, Lemma~\ref{lem:gauge-surrogate} gives
\[
    \|\bar\ell_t\|_2
    =
    \left\|
        \frac1N\sum_{j=1}^N\widetilde\ell_{j,t}
    \right\|_2
    \le
    \frac1N\sum_{j=1}^N
    \|\widetilde\ell_{j,t}\|_2
    \le
    \widetilde G.
\]
Consequently,
\begin{equation}
    \left\langle
        \bar\ell_t,
        \bar u_t^\star-\bar u_{t+1}^\star
    \right\rangle
    \le
    \frac{\|\bar\ell_t\|_2^2}{m_T}
    \le
    \frac{\widetilde G^2}{m_T}.
    \label{eq:average-ftrl-stability}
\end{equation}

Finally, combining \eqref{eq:btl-average} and \eqref{eq:average-ftrl-stability} we have
\begin{align*}
\sum_{t=1}^T \langle\bar\ell_t,\bar u_t^\star-v\rangle
&= \sum_{t=1}^T \langle\bar\ell_t,\bar u_{t+1}^\star-v\rangle
+ \sum_{t=1}^T \langle\bar\ell_t, \bar u_t^\star-\bar u_{t+1}^\star\rangle\\
&\le \bar\Psi_1(v)-\bar\Psi_1(0)
+ \frac{T\widetilde G^2}{m_T},
\end{align*}
which proves the claim of Lemma~\ref{lem:average-bftrl}.
\end{proof}

\section{Proof of Lemma~\ref{lem:local-solve-accuracy}}
\begin{proof}[Proof of Lemma~\ref{lem:local-solve-accuracy}]

At \(t=1\), \(\theta_{a,1}=0\) and \(u_{a,1}=0=u_{a,1}^\star\). Consider a call at \(t\geq2\), and instantiate Algorithm~\ref{alg:hybrid-newton} in Appendix~\ref{app:hybrid-newton} with \(F=\Psi_{a,t}\), \(\nu=1\), \(\mu=\mu_T\), \(m=m_T\), \(\theta=\theta_{a,t}\), and \(\varepsilon=\varepsilon_{a,t}\). Write \(\lambda\) for the corresponding Newton decrement. Its stopping threshold
\eqref{eq:solver-stopping} is
\[
    \tau_{a,t}
    :=
    \frac{\sqrt{m_T}\,\varepsilon_{a,t}}
    {1+\sqrt{m_T}\,\varepsilon_{a,t}}
    \in(0,1).
\]
By induction, the warm start \(u_{a,t-1}\) is interior. While \(\lambda>1/4\), Lemma~\ref{lem:dikin-safety} keeps each damped step interior, while \eqref{eq:sc-damped-decrease}, together with monotonicity of \(\omega(s):=s-\log(1+s)\), decreases the fixed potential \(\Psi_{a,t}\) by at least \(\omega(1/4)>0\). Since \(\Psi_{a,t}\) is bounded below by its minimum, this phase is finite. Once \(\lambda\leq1/4\), Lemma~\ref{lem:dikin-safety} keeps every full step interior, and \eqref{eq:sc-full-contraction} gives
\[
    \lambda^+
    \leq
    \left(\frac{\lambda}{1-\lambda}\right)^2
    \leq2\lambda^2.
\]
If \(\tau_{a,t}\geq1/4\), the stopping condition already holds. Otherwise, for the  full-step decrements set \(b_k:=2\lambda_k\). Then \(b_{k+1}\leq b_k^2\) and \(b_0\leq1/2\), so \(b_k\leq2^{-2^k}\). Hence \(\lambda_k\leq\tau_{a,t}\) after finitely many steps. Applying Lemma~\ref{lem:decrement-certificate} with \(m=m_T\) and its threshold variable \(\delta=\tau_{a,t}\) yields \eqref{eq:local-solve-accuracy}. The returned point is interior and is therefore a valid warm start for the next call.

Finally, since we set \(\varepsilon_{a,1}=0\) and \(\varepsilon_{a,t}=R/T\) for \(2\leq t\leq T\), we have \(E_{a,T}=(T-1)\frac RT<R.\) Averaging over agents proves \(E_T < R\).
\end{proof}

\section{Proof of Lemma~\ref{lem:feasible-point-disagreement}}
\begin{proof}[Proof of Lemma~\ref{lem:feasible-point-disagreement}]
We derive two properties of $p_{\mathcal C}(\cdot)$.
By Lemma~\ref{lem:gauge-properties}, the inclusion \(r\mathbb B_2\subseteq\mathcal C\) in Assumption~\ref{ass:geometry} makes \(\gamma_{\mathcal C}\), and hence \(S_{\mathcal C}\), \(1/r\)-Lipschitz continuous. 
For $u,v\in R\mathbb B_2$, set
\[
    A:=1+S_{\mathcal C}(u),
    \qquad
    B:=1+S_{\mathcal C}(v),
\]
By definition, $p_{\mathcal C}(u)=\frac{u}{A}$, $p_{\mathcal C}(v)=\frac{v}{B}.$ Adding and subtracting $v/A$ and applying the triangle inequality yields
\begin{align*}
\|p_{\mathcal C}(u)-p_{\mathcal C}(v)\|_2
&= \left\|\frac{u}{A}-\frac{v}{B}\right\|_2 
= \left\| \frac{u-v}{A} + v\left(\frac{1}{A}-\frac{1}{B}\right) \right\|_2 \\
&\le \frac{\|u-v\|_2}{A} + \|v\|_2 \left|\frac{1}{A}-\frac{1}{B}\right| \notag\\
&= \frac{\|u-v\|_2}{A} + \frac{\|v\|_2|A-B|}{AB}.
\end{align*}
Since $S_{\mathcal C}\ge 0$, we have $A,B\ge 1$. Moreover,
$v\in R\mathbb B_2$ implies $\|v\|_2\le R$, and the
$1/r$-Lipschitz continuity of $S_{\mathcal C}$ gives
\(
    |A-B|
    = |S_{\mathcal C}(u)-S_{\mathcal C}(v)|
    \le \frac{1}{r}\|u-v\|_2.
\)
Substituting these bounds yields
\begin{align}
\|p_{\mathcal C}(u)-p_{\mathcal C}(v)\|_2
&\le
\|u-v\|_2
+
\frac{R}{r}\|u-v\|_2 \notag\\
&=
(1+\kappa)\|u-v\|_2,
\label{eq:radial-projection-lipschitz}
\end{align}
where $\kappa=R/r$.
Appying Lemma~\ref{lem:gauge-proj-prop} with
$\varepsilon_{\rm gau}=1/T$ gives \(S_{\mathcal C}(u_{a,t}) \le S_{a,t} \le S_{\mathcal C}(u_{a,t})+\frac{1}{T},\)
and therefore
\[
    |S_{a,t}-S_{\mathcal C}(u_{a,t})|
    \le \frac{1}{T}.
\]
Moreover, by definition, \(p_{a,t} = \frac{u_{a,t}}{1+S_{a,t}},\) and \(p_{\mathcal C}(u_{a,t}) = \frac{u_{a,t}}{1+S_{\mathcal C}(u_{a,t})}.\)
Hence
\begin{align}
\|p_{a,t}-p_{\mathcal C}(u_{a,t})\|_2
&=
\left\| u_{a,t}
\left(
    \frac{1}{1+S_{a,t}} - \frac{1}{1+S_{\mathcal C}(u_{a,t})}
\right)
\right\|_2
=
\frac{
    \|u_{a,t}\|_2
    |S_{a,t}-S_{\mathcal C}(u_{a,t})|
}{
    (1+S_{a,t})
    (1+S_{\mathcal C}(u_{a,t}))
}
\notag\\
&\le
\|u_{a,t}\|_2
|S_{a,t}-S_{\mathcal C}(u_{a,t})|
\le \frac{R}{T},
\label{eq:radial-projection-error}
\end{align}
where the first inequality uses $S_{a,t}\ge 0$ and $S_{\mathcal C}(u_{a,t})\ge 0$ so that the denominator is at least one, and the last inequality uses $\|u_{a,t}\|_2\le R$ and $|S_{a,t}-S_{\mathcal C}(u_{a,t})|\le 1/T$.

By definitions, $w_{a,t}=x_0+p_{a,t}$ and $z_t=x_0+p_{\mathcal C}(\bar u_t^\star).$
Hence the common $x_0$ cancels, and
\begin{align*}
\|w_{a,t}-z_t\|_2
& = \|p_{a,t}-p_{\mathcal C}(\bar u_t^\star)\|_2 \\
&=
\left\|
    p_{a,t}
    -p_{\mathcal C}(u_{a,t})
    +p_{\mathcal C}(u_{a,t})
    -p_{\mathcal C}(\bar u_t^\star)
\right\|_2\\
&\le
\|p_{a,t}-p_{\mathcal C}(u_{a,t})\|_2
+
\|p_{\mathcal C}(u_{a,t})
-p_{\mathcal C}(\bar u_t^\star)\|_2 \\
& \le \frac{R}{T} +(1+\kappa) \|u_{a,t}-\bar u_t^\star\|_2.
\end{align*}
where the final inequality comes from \eqref{eq:radial-projection-error} and \eqref{eq:radial-projection-lipschitz}.
By the triangle inequality, Lemmas~\ref{lem:local-solve-accuracy} and~\ref{lem:minimizer-disagreement} yield
\begin{align*}
\|u_{a,t}-\bar u_t^\star\|_2
\le \|u_{a,t}-u_{a,t}^\star\|_2 + \|u_{a,t}^\star-\bar u_t^\star\|_2
\le \varepsilon_{a,t} + \frac{d_{a,t}}{m_T},
\end{align*}
Therefore,
\[
    \|w_{a,t}-z_t\|_2
    \le
    \frac{R}{T}
    +(1+\kappa)
    \left(
        \varepsilon_{a,t}
        +\frac{d_{a,t}}{m_T}
    \right).
\]

Applying the triangle inequality through \(z_t\), averaging over payoff owners \(j\), and summing over \(t\) gives
\[
\begin{aligned}
    \sum_{t=1}^T\frac1N\sum_{j=1}^N
    \|w_{a,t}-w_{j,t}\|_2
    \leq{}&
    2R+(1+\kappa)(E_{a,T}+E_T)
    +
    \frac{1+\kappa}{m_T}
    \left(
    \sum_{t=1}^T d_{a,t}
    +
    \sum_{t=1}^T\frac1N\sum_{j=1}^N d_{j,t}
    \right).
\end{aligned}
\]
Using Equations~\eqref{eq:row-state-sum} and
\eqref{eq:average-state-sum} to bound $\sum_{t=1}^T d_{a,t}$ and $\sum_{t=1}^T\frac1N\sum_{j=1}^N d_{j,t}$ respectively to obtain the final bound as stated.
\end{proof}

%================================================================

\section{Applications to Up-Concave and DR-Submodular Maximization}
\label{sec:applications}

We instantiate Theorem~\ref{thm:main} through four wrapper pairs covering three standard application families. The upper-linearization inequalities are inherited from prior work, and our consequence is their combination with the per-agent decentralized separation-oracle guarantee. Throughout this section, the action set also satisfies Assumption~\ref{ass:geometry}, which states for a known $x_0\in\operatorname{int}(\mathcal K)$,
\[
 \mathcal C=\mathcal K-x_0,
 \qquad
 r\mathbb B_2\subseteq\mathcal C\subseteq R\mathbb B_2,
 \qquad
 \kappa=R/r.
\]
Thus an application-specific requirement that $\mathbf0\in\mathcal K$ does not replace the interior center $x_0$ used by the gauge algorithm.

\paragraph{Function classes.}
For vectors, $x\leq y$ denotes coordinatewise order, and $\|x\|_\infty:=\max_k|x_k|$. A differentiable function $f$ is $\gamma$-weakly up-concave, for $\gamma\in(0,1]$, if for every $x\leq y$ in its domain,
\[
 \gamma\langle\nabla f(y),y-x\rangle
 \leq f(y)-f(x)
 \leq\frac1\gamma\langle\nabla f(x),y-x\rangle.
\]
We use \emph{up-concave} when $\gamma=1$. For a continuous monotone function, its curvature is the smallest $c\in[0,1]$ such that
\[
 f(y+z)-f(y)
 \geq(1-c)\bigl(f(x+z)-f(x)\bigr)
\]
whenever $x,y,z\geq\mathbf0$ and all four displayed arguments belong to the domain. These conventions follow \citet{pedramfar2024linear}. A differentiable function is continuous DR-submodular if $\nabla f(x)\geq\nabla f(y)$ coordinatewise whenever $x\leq y$. A set $\mathcal K\subseteq\mathbb R_{\geq0}^d$ is down-closed if $x\in\mathcal K$ and $\mathbf0\leq y\leq x$ imply $y\in\mathcal K$.

\begin{assumption}[One-query first-order base oracle]
\label{ass:one-query-interface}
There exists $L>0$ such that, for every differentiable payoff $f$ and query point $z\in\mathcal K$, the base oracle $\mathcal O_f(z)$ returns a random vector $V_f(z)$ satisfying, conditional on the previously revealed information and on the chosen query point,
\[
 \mathbb E[V_f(z)\mid \text{past and }z]=\nabla f(z),
 \qquad
 \|V_f(z)\|_2\leq L.
\]
The auxiliary randomness used by the Query Wrapper is sampled as part of the wrapped-response generation, after conditioning on $\mathcal F_t^{-}$, and independently of the subsequent base-oracle noise.
\end{assumption}

Each wrapper below invokes its base oracle exactly once. Following the feedback convention of \citet{pedramfar2025uniform} and \citet{lu2025decentralized}, the feedback is \emph{first-order semi-bandit} when the base-oracle query point equals the played action $\mathcal W^{\mathrm{act}}(w)$, and \emph{first-order full-information} when the query point may differ from that action. Here full-information means that one first-order query may be placed at another feasible point; it does not mean that the entire payoff function is revealed. We do not use blocking, smoothing, zeroth-order estimators, or other limited-feedback conversions.

For a wrapper $\mathcal W=(\mathcal W^{\rm act},\mathcal W^{\rm qry})$, we write $\mathcal W^{\rm qry}(\mathcal O_f)$ as the vector-valued oracle obtained by applying the wrapper to the base oracle $\mathcal O_f$. When invoked at $w$, it returns
\[
    q\sim \mathcal W^{\rm qry}(\mathcal O_f)(w).
\]
When the wrapper is applied to $f_{t,j}$ at $w_{j,t}$, its proxy is
\[
    g(f_{t,j},w_{j,t})
    :=
    \mathbb E[q_{j,t}\mid\mathcal F_t^-].
\]
The tower property over the wrapper and base-oracle randomness verifies the conditional-mean requirement in Assumption~\ref{ass:feedback}. All four wrapped outputs satisfy $\|q\|_2\leq L$, so $G=L$. Moreover,
\[
 \|\nabla f(z)\|_2
 =\|\mathbb E[V_f(z)\mid \text{past and }z]\|_2
 \leq L.
\]
Hence $f$ is $L$-Lipschitz on the convex action set. If the Action Wrapper $h=\mathcal W^{\mathrm{act}}$ is $L_h$-Lipschitz, then
\begin{equation}
 |f(h(w))-f(h(v))|
 \leq LL_h\|w-v\|_2.
 \label{eq:application-payoff-stability}
\end{equation}
so Assumption~\ref{ass:payoff-stability} holds with $L_\phi=LL_h$. Substituting $G=L$ and $L_\phi=LL_h$ into Theorem~\ref{thm:main} gives, for every agent $a$,
\begin{equation}
 \mathfrak R_\alpha^a(T)
 =\widetilde O\!\left(
 \beta+LR\left[
 \beta\kappa+L_h(1+\kappa)(1+\log N)
 \right]
 \sqrt{\frac{T}{1-\rho}}
 \right).
 \label{eq:application-regret-template}
\end{equation}
In each corollary below, every local payoff $f_{t,j}$ is assumed to satisfy the corresponding proposition with common class and oracle constants, and the same wrapper is used by every agent and round. In the anchored non-monotone case, the anchor $\underline x$ is also fixed and shared.
Every wrapper makes one local payoff-oracle query and no action-set-oracle call. Thus the application instantiations retain one neighbor exchange per round and the per-agent $O(T\log(\kappa T))$ separation-oracle count of Theorem~\ref{thm:main}.

\subsection{Monotone Up-Concave Functions over General Convex Sets}

\begin{proposition}[Identity wrapper for monotone weakly up-concave functions; adapted from \citet{pedramfar2024linear}]
\label{prop:mono-general-wrapper}
Let $f:[0,1]^d\to\mathbb R_{\geq0}$ be differentiable, monotone, and $\gamma$-weakly up-concave for $\gamma\in(0,1]$, with curvature at most $c\in[0,1]$, and let $\mathcal K\subseteq[0,1]^d$ be convex. Define
\[
 \mathcal W^{\mathrm{act}}(w):=h(w)=w,
 \qquad
 \mathcal W^{\mathrm{qry}}(\mathcal O_f)(w)
 :=\mathcal O_f(w).
\]
Thus $q=V_f(w)$ and $\mathfrak g(f,w)=\nabla f(w)$. The base-oracle query equals the played action, so this is first-order semi-bandit feedback. For all $w,y\in\mathcal K$, 
\[
 \frac{\gamma^2}{1+c\gamma^2}f(y)-f(h(w))
 \leq
 \frac{\gamma}{1+c\gamma^2}
 \langle\mathfrak g(f,w),y-w\rangle.
\]
Consequently, this wrapper instantiates
\[
 (\alpha,\beta,G,L_\phi)
 =
 \left(
 \frac{\gamma^2}{1+c\gamma^2},
 \frac{\gamma}{1+c\gamma^2},
 L,L
 \right).
\]
\end{proposition}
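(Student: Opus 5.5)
The plan is to prove the inequality pointwise for fixed $w,y\in\mathcal K$ by the standard lattice decomposition through $w\vee y$ and $w\wedge y$ (coordinatewise max and min), which lie in $[0,1]^d$. Since $h(w)=w$, the target reads
\[
\gamma^2 f(y)\leq(1+c\gamma^2)f(w)+\gamma\langle\nabla f(w),y-w\rangle .
\]
The wrapper facts (semi-bandit, $q=V_f(w)$, $\mathfrak g=\nabla f$, $G=L$, and $L_\phi=L$ because $h$ is the identity) then follow directly from Assumption~\ref{ass:one-query-interface} and \eqref{eq:application-payoff-stability} with $L_h=1$. I will only need to check the inequality itself.

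Set
\[
A:=\langle\nabla f(w),w\vee y-w\rangle,\qquad B:=\langle\nabla f(w),w-w\wedge y\rangle .
\]
Since $(w\vee y-w)-(w-w\wedge y)=y-w$, we have $A-B=\langle\nabla f(w),y-w\rangle$. Monotonicity gives $\nabla f\geq 0$, so $A,B\geq0$. I will collect three inequalities.
\begin{enumerate}
\item The upper half of weak up-concavity along $w\leq w\vee y$ gives $f(w\vee y)\leq f(w)+A/\gamma$.
\item The lower half along $w\wedge y\leq w$ gives $\gamma B\leq f(w)-f(w\wedge y)$. Together with $f\geq0$, this yields $\gamma B\leq f(w)$.
\item The curvature inequality with $x=w\wedge y$, base point $y$, and $z=w-w\wedge y\geq 0$ uses the identities $y+z=w\vee y$ and $x+z=w$. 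It gives $f(w\vee y)-f(y)\geq(1-c)\bigl(f(w)-f(w\wedge y)\bigr)\geq(1-c)\gamma B$.
\end{enumerate}

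Combining the three, I get $f(y)\leq f(w)+A/\gamma-(1-c)\gamma B$, hence
\[
\gamma^2 f(y)\leq\gamma^2 f(w)+\gamma A-(1-c)\gamma^3B .
\]
Comparing with the target, where $\gamma A-\gamma B$ appears, it remains to show
\[
\gamma B\bigl(1-(1-c)\gamma^2\bigr)\leq\bigl(1-(1-c)\gamma^2\bigr)f(w).
\]
This holds because $1-(1-c)\gamma^2\geq0$ and $\gamma B\leq f(w)$. Dividing by $1+c\gamma^2$ gives the stated $(\alpha,\beta)$.

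The main obstacle is this final balancing step. A naive combination, for example bounding the leftover $\gamma B$ against $cf(w)$ alone, only works when $\gamma=1$. The key is to keep the exact factor $1-(1-c)\gamma^2$ on both sides so that the nonnegativity bound $\gamma B\leq f(w)$ closes the argument. I would also double-check that the curvature definition applies with these particular arguments, which requires all four points $w\wedge y$, $y$, $w$, $w\vee y$ to lie in the domain $[0,1]^d$; they do.
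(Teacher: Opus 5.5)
Your proof is correct, and the wrapper bookkeeping ($q=V_f(w)$, $\mathfrak g(f,w)=\nabla f(w)$, $G=L$, and $L_\phi=L$ from \eqref{eq:application-payoff-stability} with $L_h=1$) is exactly what the paper does.

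The difference lies in the upper-linearization inequality. The paper gives no argument for it and simply invokes Lemma~1 of \citet{pedramfar2024linear} specialized to $\mu=0$. You instead derive it from this paper's own definitions. You use the join/meet decomposition through $w\vee y$ and $w\wedge y$, plus a single curvature instance at $x=w\wedge y$, base point $y$, and $z=w-w\wedge y$.

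The steps check out:
\begin{itemize}
\item The upper half of weak up-concavity is applied with the gradient at the lower point $w$, and the lower half with the gradient at the upper point $w$.
\item The coordinatewise identities $y+z=w\vee y$, $x+z=w$, and $A-B=\langle\nabla f(w),y-w\rangle$ hold.
\item The gap between your bound and the target is exactly $\bigl(1-(1-c)\gamma^2\bigr)\bigl(f(w)-\gamma B\bigr)\geq 0$.
\end{itemize}

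The citation buys brevity. Your route buys self-containment: it confirms the constants under the conventions as written here, rather than relying on the external lemma's conventions matching. It also shows that only both halves of weak up-concavity, one curvature instance, and $f\geq 0$ are needed.

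One minor remark: the signs $A,B\geq 0$ are never actually used in your argument. Monotonicity enters only when passing from the minimal curvature constant $c_0$ to the given bound $c$, which needs $f(w)\geq f(w\wedge y)$. Equivalently, you can run the argument with $c_0$ and relax $(1+c_0\gamma^2)f(w)\leq(1+c\gamma^2)f(w)$ at the end using $f\geq 0$.
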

\begin{proof}
The action and query point both equal $w\in\mathcal K$. Assumption~\ref{ass:one-query-interface} gives $\mathfrak g(f,w)=\nabla f(w)$ and $\|q\|_2\leq L$. The upper-linearization inequality, including its curvature-dependent constants, is \citet[Lemma~1, specialized to $\mu=0$]{pedramfar2024linear}. Finally, the identity Action Wrapper is $1$-Lipschitz, so \eqref{eq:application-payoff-stability} gives $L_\phi=L$.
\end{proof}

\begin{corollary}[Monotone up-concave regret on a general set]
\label{cor:mono_general}
Under Proposition~\ref{prop:mono-general-wrapper} and the remaining assumptions of Theorem~\ref{thm:main}, every agent $a\in[N]$ satisfies
\[
 \mathfrak R_{\gamma^2/(1+c\gamma^2)}^a(T)
 =\widetilde O\!\left(
 \frac{\gamma}{1+c\gamma^2}
 +LR\left[
 \frac{\gamma\kappa}{1+c\gamma^2}
 +(1+\kappa)(1+\log N)
 \right]
 \sqrt{\frac{T}{1-\rho}}
 \right).
\]
\end{corollary}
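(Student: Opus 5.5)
The plan is to read the corollary off Theorem~\ref{thm:main}, specialized through Proposition~\ref{prop:mono-general-wrapper}, and then simplify using Lemma~\ref{lem:row-mixing}. First, I check that the hypotheses of Theorem~\ref{thm:main} hold for the identity wrapper.

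\begin{itemize}
\item The upper-linearization inequality of Definition~\ref{def:upper-linearizable} holds with $\alpha=\gamma^2/(1+c\gamma^2)$, $\beta=\gamma/(1+c\gamma^2)$ and $h=\mathrm{id}$. This is exactly the display in Proposition~\ref{prop:mono-general-wrapper}.
\item Assumption~\ref{ass:feedback} holds with $G=L$. Here $q_{a,t}=V_{f_{t,a}}(w_{a,t})$, and Assumption~\ref{ass:one-query-interface} gives conditional mean $\nabla f_{t,a}(w_{a,t})=\mathfrak g(f_{t,a},w_{a,t})$ and $\|q_{a,t}\|_2\le L$. The point $w_{a,t}$ is $\mathcal F_t^-$-measurable and feasible by Lemma~\ref{lem:gauge-surrogate}.
\item Assumption~\ref{ass:payoff-stability} holds with $L_\phi=L$. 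The bound $\|\nabla f\|_2\le L$ makes $f$ $L$-Lipschitz on the convex set $\mathcal K$, and $h$ is $1$-Lipschitz, so \eqref{eq:application-payoff-stability} applies.
\end{itemize}

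Assumptions~\ref{ass:geometry} and~\ref{ass:network} are among ``the remaining assumptions of Theorem~\ref{thm:main}''. Every $f_{t,j}$ shares the same $(\gamma,c,L)$, so the class is common.

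Next I substitute these constants into \eqref{eq:main-regret}, using $\widetilde G=2\kappa L$ and $\delta=1-\rho$.

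\begin{itemize}
\item The term $\mathcal B_T$ becomes $\beta[3LR+2\kappa LR+\log T]+\beta\cdot 2\kappa LR\sqrt{2T(1+1/\delta)}$. Since $1+1/\delta\le 2/\delta$, this is $O\bigl(\beta\log T+\beta\kappa LR\sqrt{T/(1-\rho)}\bigr)$.
\item The constant $2LR(2+\kappa)$ is absorbed into the $\sqrt T$ term.
\item The last term is $L(1+\kappa)R\sqrt{T\delta/(2(1+\delta))}\,\bigl(M_a(W)+1/\delta\bigr)$. By Lemma~\ref{lem:row-mixing}, $M_a(W)+1/\delta=O\bigl((1+\log N)/\delta\bigr)$. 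Together with $\sqrt{\delta}\le 1$, this gives $O\bigl(L(1+\kappa)R(1+\log N)\sqrt{T/\delta}\bigr)$.
\end{itemize}

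Combining these bounds gives exactly the template \eqref{eq:application-regret-template} with $L_h=1$, where $\widetilde O$ hides the $\log T$ factor. This is the displayed bound once $\beta=\gamma/(1+c\gamma^2)$ is inserted and $\beta\kappa LR$ is grouped with the $(1+\kappa)(1+\log N)LR$ term.

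The only step needing care is the $\delta$-bookkeeping in the network term. The factor $\sqrt{\delta}$ multiplies $M_a(W)=O((1+\log N)/\delta)$, which yields $\sqrt{T/\delta}$ rather than a worse $T/\delta$ dependence, and the separate $1/\delta$ summand is dominated in the same way. The rest is direct substitution, and the regret index $\alpha$ of $\mathfrak R^a$ matches because the theorem's benchmark coefficient is the structural $\alpha$ of the wrapper.
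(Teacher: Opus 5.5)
Your proposal is correct and is essentially the paper's argument. The paper's proof is the one-line application of the template \eqref{eq:application-regret-template} with the tuple of Proposition~\ref{prop:mono-general-wrapper} and $L_h=1$. That template is obtained exactly as you do it, by substituting $G=L$ and $L_\phi=LL_h$ into Theorem~\ref{thm:main} and bounding $M_a(W)$ via Lemma~\ref{lem:row-mixing}, so you have simply unpacked that derivation and the hypothesis checks explicitly.
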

\begin{proof}
Apply \eqref{eq:application-regret-template} with the tuple in Proposition~\ref{prop:mono-general-wrapper} and $L_h=1$.
\end{proof}

\subsection{Monotone Up-Concave Functions over Convex Sets Containing the Origin}

When $\mathbf0\in\mathcal K$, a randomized Query Wrapper yields the curvature-independent coefficient $1-e^{-\gamma}$.

\begin{proposition}[Boosted wrapper for monotone weakly up-concave functions; adapted from \citet{pedramfar2024linear}]
\label{prop:mono-origin-wrapper}
Let $f:[0,1]^d\to\mathbb R_{\geq0}$ be differentiable, monotone, and $\gamma$-weakly up-concave for $\gamma\in(0,1]$, and let $\mathcal K\subseteq[0,1]^d$ be convex with $\mathbf0\in\mathcal K$. Define $\mathcal W^{\mathrm{act}}(w)=h(w)=w$. The Query Wrapper $\mathcal W^{\mathrm{qry}}(\mathcal O_f)(w)$ draws $Z\in[0,1]$ with density
\[
 p_\gamma(z)=\frac{\gamma e^{\gamma(z-1)}}{1-e^{-\gamma}}
\]
and returns
\[
 q=V_f(Zw).
\]
Then
\[
 \mathfrak g(f,w)
 =\int_0^1p_\gamma(z)\nabla f(zw)\,dz.
\]
The query $Zw$ generally differs from the played action $w$, so this is first-order full-information feedback. For all $w,y\in\mathcal K$, 
\[
 (1-e^{-\gamma})f(y)-f(h(w))
 \leq
 \frac{1-e^{-\gamma}}{\gamma}
 \langle\mathfrak g(f,w),y-w\rangle.
\]
Consequently,
\[
 (\alpha,\beta,G,L_\phi)
 =
 \left(
 1-e^{-\gamma},
 \frac{1-e^{-\gamma}}{\gamma},
 L,L
 \right).
\]
\end{proposition}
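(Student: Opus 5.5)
We verify the four claims of Proposition~\ref{prop:mono-origin-wrapper} one at a time: the form of the proxy, the feedback bound, the upper-linearization inequality, and the stability constant.

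\textbf{Proxy.} Condition on $\mathcal F_t^{-}$, so $w$ is fixed. The auxiliary variable $Z$ is drawn independently of the later base-oracle noise, as Assumption~\ref{ass:one-query-interface} requires. By the tower property,
\[
\mathbb E[q\mid w]=\mathbb E\bigl[\mathbb E[V_f(Zw)\mid w,Z]\bigr]=\mathbb E[\nabla f(Zw)]=\int_0^1 p_\gamma(z)\nabla f(zw)\,dz .
\]
We must check that each query $Zw$ is feasible. This holds because $\mathbf0\in\mathcal K$, $\mathcal K$ is convex and $Z\in[0,1]$. We must also check that $p_\gamma$ is a density. It is nonnegative, and
\[
\int_0^1\gamma e^{\gamma(z-1)}\,dz=1-e^{-\gamma},
\]
so dividing by $1-e^{-\gamma}$ gives total mass one. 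The same feasibility gives $\|q\|_2\le L$, hence $G=L$.

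\textbf{Upper-linearization inequality.} This is the substantive step. Fix $w,y\in\mathcal K$ and, for $z\in[0,1]$, set $\phi(z):=f(zw)$, so that $\phi'(z)=\langle\nabla f(zw),w\rangle$. The key pointwise bound is
\[
\langle\nabla f(zw),y\rangle\ \ge\ \gamma\bigl(f(y)-f(zw)\bigr).
\]
This follows from monotonicity and $\gamma$-weak up-concavity applied along the comparable pair $zw\le zw\vee y$:
\[
\langle\nabla f(zw),y\rangle\ \ge\ \langle\nabla f(zw),(zw\vee y)-zw\rangle\ \ge\ \gamma\bigl(f(zw\vee y)-f(zw)\bigr)\ \ge\ \gamma\bigl(f(y)-f(zw)\bigr).
\]
The first inequality uses $\nabla f\ge0$ together with $(zw\vee y)-zw\le y$. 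The second is the right-hand inequality in the definition of $\gamma$-weak up-concavity. The third uses monotonicity of $f$.

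Subtracting $\phi'(z)$ from both sides gives
\[
\langle\nabla f(zw),y-w\rangle\ \ge\ \gamma f(y)-\gamma\phi(z)-\phi'(z).
\]
We then multiply by $\gamma e^{\gamma(z-1)}/(1-e^{-\gamma})$ and integrate over $[0,1]$. Since $\tfrac{d}{dz}\bigl(e^{\gamma(z-1)}\phi(z)\bigr)=e^{\gamma(z-1)}\bigl(\gamma\phi(z)+\phi'(z)\bigr)$, the $\phi$ terms integrate exactly to a boundary term:
\[
\int_0^1 e^{\gamma(z-1)}\bigl(\gamma\phi+\phi'\bigr)\,dz=\phi(1)-e^{-\gamma}\phi(0)=f(w)-e^{-\gamma}f(\mathbf0).
\]
The first term integrates to
\[
\gamma f(y)\int_0^1 e^{\gamma(z-1)}\,dz=(1-e^{-\gamma})f(y).
\]
Using $f(\mathbf0)\ge0$, the boundary term is at most $f(w)$. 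Combining, we obtain
\[
\frac{\gamma}{1-e^{-\gamma}}\,\langle\mathfrak g(f,w),y-w\rangle\ \ge\ (1-e^{-\gamma})f(y)-f(w).
\]
Multiplying through by $(1-e^{-\gamma})/\gamma$ gives the claimed inequality with $\alpha=1-e^{-\gamma}$ and $\beta=(1-e^{-\gamma})/\gamma$.

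\textbf{Expected obstacle.} The main risk lies in the careful comparability argument through $zw\vee y$, and in the sign and boundary bookkeeping of the integration by parts. If the normalization is mismatched, the resulting $\beta$ is off by a factor. Alternatively, this inequality can be cited directly from \citet{pedramfar2024linear}, the boosting lemma there for the case $\mathbf0\in\mathcal K$.

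\textbf{Stability constant.} The action map $h$ is the identity and hence $1$-Lipschitz. By \eqref{eq:application-payoff-stability}, Assumption~\ref{ass:payoff-stability} holds with $L_\phi=L$, which completes the tuple $(\alpha,\beta,G,L_\phi)$.
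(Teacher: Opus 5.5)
Your proposal is correct in substance. Feasibility of $Zw$, the tower-property proxy, $\|q\|_2\le L$, and $L_h=1\Rightarrow L_\phi=L$ match the paper's short proof. The route differs on the one nontrivial claim. The paper simply cites the boosting lemma of \citet{pedramfar2024linear} for $(1-e^{-\gamma})f(y)-f(w)\le\frac{1-e^{-\gamma}}{\gamma}\langle\mathfrak g(f,w),y-w\rangle$. You instead re-derive it: first the pointwise bound $\langle\nabla f(zw),y\rangle\ge\gamma\bigl(f(y)-f(zw)\bigr)$ through the comparable pair $zw\le zw\vee y$, then the exponential integrating factor that telescopes $\gamma\phi+\phi'$ into $f(w)-e^{-\gamma}f(\mathbf 0)$.

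This makes the proposition self-contained and shows where each hypothesis enters:
\begin{itemize}
\item nonnegativity of $\nabla f$ and monotonicity in the $\vee$ step;
\item weak up-concavity on all of $[0,1]^d$, since $zw\vee y$ need not lie in $\mathcal K$, only in the domain of $f$, which is all the definition requires;
\item $f\ge0$, to drop $e^{-\gamma}f(\mathbf 0)$.
\end{itemize}
It also confirms that the normalized density $p_\gamma$ yields exactly $\beta=(1-e^{-\gamma})/\gamma$, which the paper's ``adapted'' citation leaves to the reader. The citation is shorter and delegates regularity details to the source. An example is absolute continuity of $\phi(z)=f(zw)$, which here follows from $\|\nabla f\|_2\le L$ along the segment $\{zw\}\subseteq\mathcal K$.

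One bookkeeping slip needs fixing: your final display has the factor inverted. Integrating the pointwise inequality against $p_\gamma$ gives
$\langle\mathfrak g(f,w),y-w\rangle\ge\frac{\gamma}{1-e^{-\gamma}}\bigl((1-e^{-\gamma})f(y)-f(w)+e^{-\gamma}f(\mathbf 0)\bigr)$.
The correct intermediate is therefore $\frac{1-e^{-\gamma}}{\gamma}\langle\mathfrak g(f,w),y-w\rangle\ge(1-e^{-\gamma})f(y)-f(w)$, which is already the claim. Equivalently, integrate against the unnormalized weight $e^{\gamma(z-1)}$, as your computation of the first term effectively does. As written, multiplying your display by $(1-e^{-\gamma})/\gamma$ does not give the stated inequality. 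This is exactly the normalization mismatch you flagged as the main risk.
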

\begin{proof}
Convexity and $\mathbf0\in\mathcal K$ imply $Zw\in\mathcal K$. Conditional unbiasedness of the base oracle followed by the tower property over $Z$ gives the displayed proxy, while $\|q\|_2\leq L$. The boosted upper-linearization inequality is \citet[Lemma~2 and Algorithm~2]{pedramfar2024linear}. The identity Action Wrapper has $L_h=1$, so $L_\phi=L$.
\end{proof}

\begin{corollary}[Monotone up-concave regret when $\mathbf0\in\mathcal K$]
\label{cor:mono_0_in_K}
Under Proposition~\ref{prop:mono-origin-wrapper} and the remaining assumptions of Theorem~\ref{thm:main}, every agent $a\in[N]$ satisfies
\[
 \mathfrak R_{1-e^{-\gamma}}^a(T)
 =\widetilde O\!\left(
 \frac{1-e^{-\gamma}}{\gamma}
 +LR\left[
 \frac{(1-e^{-\gamma})\kappa}{\gamma}
 +(1+\kappa)(1+\log N)
 \right]
 \sqrt{\frac{T}{1-\rho}}
 \right).
\]
\end{corollary}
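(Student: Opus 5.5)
The corollary follows by substituting the tuple of Proposition~\ref{prop:mono-origin-wrapper} into Theorem~\ref{thm:main}, through the template \eqref{eq:application-regret-template}. The work is to confirm that every hypothesis of the theorem holds for the boosted wrapper; the rate then follows by bookkeeping.

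\textbf{Step 1: hypotheses.} Proposition~\ref{prop:mono-origin-wrapper} gives the $(\alpha,\beta)$-upper-linearizable inequality with $\alpha=1-e^{-\gamma}$ and $\beta=(1-e^{-\gamma})/\gamma$. The action map is $h=\mathrm{id}$, and the proxy is $\mathfrak g(f,w)=\int_0^1 p_\gamma(z)\nabla f(zw)\,dz$. Assumption~\ref{ass:feedback} is where care is needed.

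\begin{itemize}
\item \emph{Measurability.} $w_{j,t}$ is $\mathcal F_t^{-}$-measurable, and $Z$ is drawn after conditioning on $\mathcal F_t^{-}$ and independently of the subsequent base-oracle noise.
\item \emph{Conditional mean.} Apply the tower property: first condition on $(\mathcal F_t^{-},Z)$ and use Assumption~\ref{ass:one-query-interface} at the query point $Zw_{j,t}$. This query is feasible because $\mathbf 0\in\mathcal K$ and $\mathcal K$ is convex. Then integrate over $Z$, giving $\mathbb E[q_{j,t}\mid\mathcal F_t^-]=\mathfrak g(f_{t,j},w_{j,t})$.
\item \emph{Boundedness.} $\|q_{j,t}\|_2\le L$ holds almost surely, so $G=L$.
\end{itemize}

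Assumption~\ref{ass:payoff-stability} holds with $L_\phi=L$. The reason is that $\|\nabla f\|_2\le L$ on the convex set $\mathcal K$, which makes $f$ $L$-Lipschitz, and $h$ is $1$-Lipschitz, so \eqref{eq:application-payoff-stability} applies with $L_h=1$. Assumptions~\ref{ass:geometry} and~\ref{ass:network} are standing assumptions.

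\textbf{Step 2: substitution.} Theorem~\ref{thm:main} bounds the regret by three terms.
\begin{itemize}
\item $\mathcal B_T=\beta[3GR+\widetilde GR+\log T+\widetilde GR\sqrt{2T(1+1/\delta)}]$. Here $\widetilde G=2\kappa L$, and $1+1/\delta\le 2/\delta$ because $\delta\le1$. This gives $O(\beta\log T+\beta\kappa LR\sqrt{T/(1-\rho)})$.
\item The constant term $2L_\phi R(2+\kappa)=O(LR\kappa)$, which is absorbed.
\item The last term is $L(1+\kappa)R\sqrt{T\delta/(2(1+\delta))}\,(M_a(W)+1/\delta)$. Lemma~\ref{lem:row-mixing} gives $M_a(W)=O((1+\log N)/\delta)$. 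Together with $\sqrt{\delta}/\delta=1/\sqrt\delta$, the term is $O(L(1+\kappa)R(1+\log N)\sqrt{T/(1-\rho)})$.
\end{itemize}
Putting $\beta=(1-e^{-\gamma})/\gamma$ into these three bounds and hiding $\log T$ factors in $\widetilde O$ yields the displayed bound.

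The only real obstacle is the conditional-unbiasedness check in Step~1. The randomized query point $Zw$ differs from the action, so the ordering of the wrapper randomness, the base-oracle noise, and $\mathcal F_t^-$ must be respected. Assumption~\ref{ass:one-query-interface} supplies exactly that ordering. The remaining algebra, including the $\delta$ powers, is routine.
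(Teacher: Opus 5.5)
Your proposal is correct and follows the paper's route. The paper's proof simply plugs the tuple $(\alpha,\beta,G,L_\phi)=\bigl(1-e^{-\gamma},(1-e^{-\gamma})/\gamma,L,L\bigr)$ with $L_h=1$ into the template \eqref{eq:application-regret-template}. Your Steps~1 and~2 make explicit what that template rests on: the hypothesis checks, done in the proof of Proposition~\ref{prop:mono-origin-wrapper} and the section preamble, and the bookkeeping from Theorem~\ref{thm:main}, including $M_a(W)=O\bigl((1+\log N)/\delta\bigr)$.
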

\begin{proof}
Apply \eqref{eq:application-regret-template} with the tuple in Proposition~\ref{prop:mono-origin-wrapper} and $L_h=1$.
\end{proof}

\subsection{Non-Monotone Up-Concave Functions over General Convex Sets}

For a non-monotone payoff, the Action Wrapper contracts the internal point toward a fixed feasible anchor.

\begin{proposition}[Anchored wrapper for non-monotone up-concave functions; adapted from \citet{pedramfar2024linear}]
\label{prop:nonmono-general-wrapper}
Let $f:[0,1]^d\to\mathbb R_{\geq0}$ be differentiable and up-concave, without a monotonicity assumption, and let $\mathcal K\subseteq[0,1]^d$ be convex. Fix a known $\underline x\in\mathcal K$ with $\|\underline x\|_\infty<1$ and define
\[
 \mathcal W^{\mathrm{act}}(w)=h(w)=\frac{w+\underline x}{2}.
\]
The Query Wrapper $\mathcal W^{\mathrm{qry}}(\mathcal O_f)(w)$ draws $Z\in[0,1]$ with density
\[
 p_{\rm nm}(z)=\frac{1}{3(1-z/2)^3}
\]
and returns
\[
 q=V_f\!\left(\underline x+\frac Z2(w-\underline x)\right).
\]
Then
\[
 \mathfrak g(f,w)
 =\int_0^1p_{\rm nm}(z)
 \nabla f\!\left(\underline x+\frac z2(w-\underline x)\right)\,dz.
\]
The query point generally differs from the played action $(w+\underline x)/2$, so this is first-order full-information feedback. For all $w,y\in\mathcal K$,
\[
 \frac{1-\|\underline x\|_\infty}{4}f(y)-f(h(w))
 \leq
 \frac38\langle\mathfrak g(f,w),y-w\rangle.
\]
Consequently,
\[
 (\alpha,\beta,G,L_\phi)
 =
 \left(
 \frac{1-\|\underline x\|_\infty}{4},
 \frac38,
 L,\frac L2
 \right).
\]
Under Assumption~\ref{ass:geometry}, the known interior point $x_0$ is an admissible choice of $\underline x$.
\end{proposition}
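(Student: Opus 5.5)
We verify the claims of Proposition~\ref{prop:nonmono-general-wrapper} in the order they are stated: feasibility of all points used, the form of the proxy and the feedback bound, the upper-linearization inequality, and the Lipschitz constant of the action map.

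\textbf{Feasibility.} For $w,\underline x\in\mathcal K$, convexity of $\mathcal K$ gives $h(w)=(w+\underline x)/2\in\mathcal K$. For $Z\in[0,1]$, the query point $\underline x+\frac Z2(w-\underline x)$ is a convex combination of $\underline x$ and $w$ with weight $Z/2\in[0,\tfrac12]$ on $w$, so it also lies in $\mathcal K$. Hence the base oracle is called at a feasible point. Moreover, that point differs from $h(w)$ unless $Z=1$, which is why the feedback is full-information rather than semi-bandit.

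\textbf{Proxy and boundedness.} We would first check that $p_{\rm nm}$ is a density: $\int_0^1\frac{dz}{3(1-z/2)^3}=\frac13\bigl[(1-z/2)^{-2}\bigr]_0^1=\frac13(4-1)=1$. Conditioning on $Z$ and using Assumption~\ref{ass:one-query-interface}, then applying the tower property over $Z$ (which is sampled independently of the base-oracle noise), gives the displayed integral formula for $\mathfrak g$. Since $\|q\|_2\le L$ almost surely, we get $G=L$.

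\textbf{Upper-linearization inequality.} This is the central analytic claim. We would not rederive it but import it from \citet{pedramfar2024linear}, specifically their non-monotone up-concave boosting lemma over general convex sets with anchor $\underline x$. The underlying argument runs as follows. Up-concavity together with the bound $f(x\vee y)\ge(1-\|x\|_\infty)f(y)$ for nonnegative DR-type functions controls $f$ along the segment from $\underline x$ toward $w$. Integrating $\frac{d}{dz}f(\underline x+\tfrac z2(w-\underline x))$ against the weight $p_{\rm nm}$ then produces the coefficients $\frac{1-\|\underline x\|_\infty}{4}$ and $\frac38$.

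\textbf{Main obstacle.} The main difficulty is making sure our normalization matches theirs. In the cited lemma the proxy may carry a different constant, for example $\frac83$ times our $\mathfrak g$. If so, the constant must be moved into $\beta$ so that the product $\beta\langle\mathfrak g,y-w\rangle$ agrees with their statement. This is exactly the step we would check carefully against the source.

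\textbf{Lipschitz constant and choice of anchor.} The map $h$ is $\tfrac12$-Lipschitz, so \eqref{eq:application-payoff-stability} gives $L_\phi=L/2$. Finally, under Assumption~\ref{ass:geometry} with $\mathcal K\subseteq[0,1]^d$, the point $x_0$ is interior. It therefore cannot lie on the face $\{x_k=1\}$, which means $\|x_0\|_\infty<1$, and $x_0$ is an admissible anchor.
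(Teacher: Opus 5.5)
Your proposal is correct and follows the paper's proof essentially step for step. The shared steps are: feasibility of the action and query points as convex combinations of $w$ and $\underline x$, the normalization $\int_0^1 p_{\rm nm}(z)\,dz=1$, the tower property over $Z$ giving the proxy formula with $G=L$, importing the structural inequality from \citet[Lemma~3]{pedramfar2024linear}, the $\tfrac12$-Lipschitz action map giving $L_\phi=L/2$, and interiority of $x_0$ in $\mathcal K\subseteq[0,1]^d$ forcing $\|x_0\|_\infty<1$. Your extra check that the proxy's normalization matches the cited lemma is sensible; the paper omits it and simply cites the lemma.
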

\begin{proof}
The played action and the query point are convex combinations of $w$ and $\underline x$, hence both belong to $\mathcal K$. The density is normalized because
\[
 \int_0^1\frac{dz}{3(1-z/2)^3}=1.
\]
Conditional unbiasedness followed by the tower property over $Z$ gives the displayed proxy without increasing the almost-sure norm bound. The structural inequality is \citet[Lemma~3 and Algorithm~3]{pedramfar2024linear}. The Action Wrapper is $1/2$-Lipschitz, so $L_\phi=L/2$. Finally, because $\mathcal K\subseteq[0,1]^d$ is full-dimensional and $x_0\in\operatorname{int}(\mathcal K)$, every coordinate of $x_0$ is strictly below $1$, and hence $\|x_0\|_\infty<1$.
\end{proof}

\begin{corollary}[Non-monotone up-concave regret on a general set]
\label{cor:nonmono_general}
Under Proposition~\ref{prop:nonmono-general-wrapper} and the remaining assumptions of Theorem~\ref{thm:main}, every agent $a\in[N]$ satisfies
\[
 \mathfrak R_{(1-\|\underline x\|_\infty)/4}^a(T)
 =\widetilde O\!\left(
 \frac38
 +LR\left[
 \frac{3\kappa}{8}
 +\frac{(1+\kappa)(1+\log N)}{2}
 \right]
 \sqrt{\frac{T}{1-\rho}}
 \right).
\]
\end{corollary}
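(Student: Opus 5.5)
The plan is to obtain Corollary~\ref{cor:nonmono_general} by specializing the generic template \eqref{eq:application-regret-template}. That template is Theorem~\ref{thm:main} with $G=L$ and $L_\phi=LL_h$. No new regret analysis is needed; the work is to check that the anchored wrapper of Proposition~\ref{prop:nonmono-general-wrapper} satisfies every hypothesis of Theorem~\ref{thm:main}, and then to substitute its tuple $(\alpha,\beta,G,L_\phi)=\bigl(\tfrac{1-\|\underline x\|_\infty}{4},\tfrac38,L,\tfrac L2\bigr)$.

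\textbf{Step 1: upper-linearizability.} Proposition~\ref{prop:nonmono-general-wrapper} gives the inequality of Definition~\ref{def:upper-linearizable}. The action map is $h(w)=(w+\underline x)/2$, the proxy is $\mathfrak g(f,w)=\int_0^1p_{\rm nm}(z)\nabla f(\underline x+\tfrac z2(w-\underline x))\,dz$, and the constants are $\alpha=(1-\|\underline x\|_\infty)/4$ and $\beta=3/8$. The anchor $\underline x$ is fixed and shared by all agents, so every $f_{t,j}$ lies in one common class.

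\textbf{Step 2: Assumption~\ref{ass:feedback}.} The query point is a convex combination of $w_{j,t}\in\mathcal K$ and $\underline x\in\mathcal K$. Lemma~\ref{lem:gauge-surrogate} gives $w_{j,t}\in\mathcal K$, so the query point is feasible. By Assumption~\ref{ass:one-query-interface}, $Z$ is sampled after conditioning on $\mathcal F_t^{-}$ and independently of the base-oracle noise. The tower property over $Z$ and then over $V_f$ gives $\mathbb E[q_{j,t}\mid\mathcal F_t^-]=\mathfrak g(f_{t,j},w_{j,t})$, and $\|q_{j,t}\|_2\le L$ holds almost surely. Hence $G=L$.

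\textbf{Step 3: Assumption~\ref{ass:payoff-stability}.} Bounded unbiased gradients give $\|\nabla f\|_2\le L$ on $\mathcal K$, so each $f$ is $L$-Lipschitz. The map $h$ is $\tfrac12$-Lipschitz, so \eqref{eq:application-payoff-stability} yields $L_\phi=L/2$.

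\textbf{Step 4: substitution.} Insert these constants into \eqref{eq:main-regret}. Use $M_a(W)=O((1+\log N)/(1-\rho))$ from Lemma~\ref{lem:row-mixing}, together with $\sqrt{T\delta/(2(1+\delta))}\,(M_a(W)+1/\delta)=O\bigl((1+\log N)\sqrt{T/\delta}\bigr)$. In $\mathcal B_T$, the terms are $\beta\widetilde G R\sqrt{2T(1+1/\delta)}=O\bigl(\tfrac38\cdot 2\kappa LR\sqrt{T/\delta}\bigr)$ and $\beta\log T=\tfrac38\log T$; the $\log T$ factor is absorbed by $\widetilde O$. The $L_\phi$ terms contribute $\tfrac L2(1+\kappa)R(1+\log N)\sqrt{T/(1-\rho)}$. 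Collecting terms gives the displayed rate, with the $\tfrac38$ outside the bracket and $\tfrac{3\kappa}{8}+\tfrac{(1+\kappa)(1+\log N)}{2}$ inside.

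\textbf{Main obstacle.} The only delicate point is Step 2. One must confirm that the randomized query point stays in $\mathcal K$. One must also confirm that the auxiliary randomness $Z$ does not break the conditioning on $\mathcal F_t^-$ used in the payoff-lifting step of the proof of Theorem~\ref{thm:main}. I would handle this by conditioning on $\mathcal F_t^-$ and $Z$ first, applying the base-oracle unbiasedness, and then integrating over the density $p_{\rm nm}$. The normalization $\int_0^1 p_{\rm nm}=1$ ensures the result is exactly $\mathfrak g$. The remaining steps are arithmetic.
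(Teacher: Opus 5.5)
Your proposal is correct and takes essentially the paper's route. The paper's proof is the one-line substitution of the tuple from Proposition~\ref{prop:nonmono-general-wrapper}, with $L_h=1/2$, into the template \eqref{eq:application-regret-template}; your Steps 1--3 re-verify hypotheses that the paper already settles in that proposition and in the text preceding the template, and your Step 4 arithmetic matches, with the constant terms $\beta(3GR+\widetilde G R)$ and $2L_\phi R(2+\kappa)$ implicitly absorbed into the $\sqrt{T/(1-\rho)}$ term.
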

\begin{proof}
Apply \eqref{eq:application-regret-template} with the tuple in Proposition~\ref{prop:nonmono-general-wrapper} and $L_h=1/2$.
\end{proof}

\subsection{Non-Monotone DR-Submodular Functions on Down-Closed Sets}

\begin{proposition}[Exponential wrapper for non-monotone DR-submodular functions; adapted from \citet{lu2026upper}]
\label{prop:dr-wrapper}
Let $f:[0,1]^d\to\mathbb R_{\geq0}$ be differentiable, non-monotone, and DR-submodular, and let $\mathcal K\subseteq[0,1]^d$ be convex, down-closed, and contain the origin. Define
\[
 \mathcal W^{\mathrm{act}}(w)=h(w)=\mathbf1-e^{-w}.
\]
The Query Wrapper $\mathcal W^{\mathrm{qry}}(\mathcal O_f)(w)$ draws $Z\in[0,1]$ with density
\[
 p(z)=\frac{e^{z-1}}{1-e^{-1}},
\]
sets $z^\circ=\mathbf1-e^{-Zw}$, and returns
\[
 q=V_f(z^\circ)\odot e^{-Zw}.
\]
Define
\[
 F_f(w)=\int_0^1
 \frac{e^{z-1}}{(1-e^{-1})z}
 \bigl(f(\mathbf1-e^{-zw})-f(\mathbf0)\bigr)\,dz.
\]
The integrand at $z=0$ is interpreted by its continuous extension.
Then
\[
 \mathfrak g(f,w)
 =\mathbb E_Z\!\left[
 \nabla f(\mathbf1-e^{-Zw})\odot e^{-Zw}
 \right]
 =\nabla F_f(w).
\]
The query $z^\circ$ generally differs from the played action $\mathbf1-e^{-w}$, so this is first-order full-information feedback. For all $w,y\in\mathcal K$,
\[
 \frac1e f(y)-f(h(w))
 \leq
 (1-e^{-1})\langle\mathfrak g(f,w),y-w\rangle.
\]
Consequently,
\[
 (\alpha,\beta,G,L_\phi)
 =\left(\frac1e,1-e^{-1},L,L\right).
\]
\end{proposition}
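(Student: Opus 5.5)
The proof will mirror the three earlier wrapper propositions. It has four parts: feasibility of the played action and of every query point, identification of the proxy together with the norm bound, the structural inequality, and the Lipschitz constant of $h$.

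\textbf{Feasibility.} Since $\mathcal K\subseteq[0,1]^d$, every $w\in\mathcal K$ satisfies $w\geq\mathbf0$. For $z\in[0,1]$ we use the coordinatewise inequality $1-e^{-zw_k}\leq zw_k\leq w_k$. This gives $\mathbf0\leq\mathbf1-e^{-zw}\leq w$, so down-closedness places both the played action $h(w)=\mathbf1-e^{-w}$ and the query $z^\circ$ in $\mathcal K$. The base oracle of Assumption~\ref{ass:one-query-interface} may therefore be invoked there.

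\textbf{Proxy and norm bound.} Conditioning on $Z$ and applying conditional unbiasedness, then the tower property over $Z$ (whose randomness is independent of the base-oracle noise), gives $\mathbb E[q\mid\mathcal F_t^-]=\mathbb E_Z[\nabla f(\mathbf1-e^{-Zw})\odot e^{-Zw}]$. Since $0<e^{-Zw_k}\leq1$, the Hadamard product does not increase the Euclidean norm, so $\|q\|_2\leq\|V_f(z^\circ)\|_2\leq L$ and $G=L$.

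For the identity with $\nabla F_f$, we differentiate under the integral: $\nabla_w f(\mathbf1-e^{-zw})=z\,\nabla f(\mathbf1-e^{-zw})\odot e^{-zw}$. The factor $z$ cancels the $1/z$ in the kernel, which leaves the density $p(z)$. Differentiation under the integral is justified by dominated convergence, since the integrand's derivative is bounded by $L$. The continuous extension at $z=0$ makes the integrand bounded, because $(f(\mathbf1-e^{-zw})-f(\mathbf0))/z\to\langle\nabla f(\mathbf0),w\rangle$.

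\textbf{Structural inequality.} We will cite \citet{lu2026upper} rather than reprove it, since the proposition is stated as adapted from that work. If a sketch is needed, the route is as follows. Along the curve $z\mapsto\mathbf1-e^{-zw}$, DR-submodularity combined with down-closedness gives the differential inequality $\frac{d}{dz}f(\mathbf1-e^{-zw})\geq\langle\nabla f,y\odot e^{-zw}\rangle-\ldots$, in the style of the measured continuous greedy for non-monotone functions. The inequality $f(x\vee y)\geq(1-\|x\|_\infty)f(y)$, together with $\|\mathbf1-e^{-zw}\|_\infty\leq1-e^{-z}$, produces the $e^{-1}$ factor after integrating against the exponential weights. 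Matching the weights $e^{z-1}/(1-e^{-1})$ then yields the constants $\alpha=1/e$ and $\beta=1-e^{-1}$. This is the main obstacle if it must be done from scratch: the $1/z$ normalization must be tracked carefully so that $\langle\nabla F_f(w),y-w\rangle$, rather than $f$ itself, appears on the right-hand side. I would defer entirely to the cited lemma for it.

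\textbf{Lipschitz constant.} The Jacobian of $h$ is $\operatorname{diag}(e^{-w})$, whose entries are at most $1$ on $w\geq\mathbf0$. Hence $L_h=1$, and \eqref{eq:application-payoff-stability} gives $L_\phi=L$.
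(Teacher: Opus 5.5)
Your proposal is correct and follows the paper's proof essentially step for step: feasibility from $\mathbf 1-e^{-Zw}\le Zw\le w$ plus down-closedness, $\|q\|_2\le L$ because $e^{-Zw}\in(0,1]^d$, the proxy via conditional unbiasedness, the tower property over $Z$ and differentiation under the integral, a citation of \citet{lu2026upper} for the $1/e$ inequality, and $L_h=1$ from the diagonal Jacobian $\operatorname{diag}(e^{-w})$. Your optional sketch of the structural inequality is loose, because for arbitrary $w$ there is no greedy-type differential inequality; the clean route sets $x_z=\mathbf 1-e^{-zw}$ and $g(z)=f(x_z)$, uses $\langle\nabla f(x_z),y\odot e^{-zw}\rangle\ge e^{-z}f(y)-g(z)$, integrates against $e^{z-1}$ with $\int_0^1 e^{z-1}(g+g')\,dz=g(1)-e^{-1}g(0)$, and finishes with $f\ge 0$; since you defer to the citation exactly as the paper does, this does not affect the proof.
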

\begin{proof}
For $w\in[0,1]^d$ and $Z\in[0,1]$,
\[
 \mathbf1-e^{-Zw}\leq Zw\leq w,
 \qquad
 \mathbf1-e^{-w}\leq w
\]
coordinatewise. Down-closedness therefore gives $z^\circ,h(w)\in\mathcal K$. Since the coordinates of $e^{-Zw}$ lie in $(0,1]$,
\[
 \|q\|_2
 \leq\|V_f(z^\circ)\|_2
 \leq L.
\]
Conditional unbiasedness and the tower property over $Z$ give the displayed proxy. The bounded-gradient condition supplies an integrable dominating function, so differentiation under the integral gives $\mathfrak g(f,w)=\nabla F_f(w)$. The $1/e$ upper-linearization inequality is \citet[Theorem~1]{lu2026upper}, and its expectation representation and one-query estimator appear in that paper's Remark~3, Algorithm~1, and Lemma~2. Finally, the Jacobian of $h(w)=\mathbf1-e^{-w}$ is diagonal with operator norm at most $1$ on $[0,1]^d$, so $L_\phi=L$ is valid.
\end{proof}

\begin{corollary}[Non-monotone DR-Submodular over Down-closed Set]
\label{cor:nonmono_downclosed}
Under Proposition~\ref{prop:dr-wrapper} and the remaining assumptions of Theorem~\ref{thm:main}, every agent $a\in[N]$ satisfies
\[
 \mathfrak R_{1/e}^a(T)
 =\widetilde O\!\left(
 1-e^{-1}
 +LR\left[
 (1-e^{-1})\kappa
 +(1+\kappa)(1+\log N)
 \right]
 \sqrt{\frac{T}{1-\rho}}
 \right).
\]
\end{corollary}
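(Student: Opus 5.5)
The corollary follows from the template \eqref{eq:application-regret-template}, once I confirm that the wrapper of Proposition~\ref{prop:dr-wrapper} satisfies every hypothesis of Theorem~\ref{thm:main}. I will take the tuple $(\alpha,\beta,G,L_\phi)=(1/e,\,1-e^{-1},\,L,\,L)$ from Proposition~\ref{prop:dr-wrapper}. Then I check the assumptions one by one.

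\emph{Upper-linearizability.} Definition~\ref{def:upper-linearizable} requires $h:\mathcal K\to\mathcal K$. This holds because $\mathbf 1-e^{-w}\le w$ coordinatewise and $\mathcal K$ is down-closed. The inequality itself is the one displayed in Proposition~\ref{prop:dr-wrapper}.

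\emph{Assumption~\ref{ass:feedback}.} The bound $\|q\|_2\le L$ holds almost surely. The conditional mean equals $\mathfrak g(f_{t,j},w_{j,t})$ by the tower property over $Z$ and the base-oracle noise. This uses Assumption~\ref{ass:one-query-interface} and the fact that $Z$ is sampled after conditioning on $\mathcal F_t^{-}$.

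\emph{Assumption~\ref{ass:payoff-stability}.} The Jacobian of $h$ is $\operatorname{diag}(e^{-w})$, whose operator norm is at most $1$ on $\mathcal K\subseteq[0,1]^d$, so $h$ is $1$-Lipschitz there. Since $\mathcal K$ is convex, the mean-value argument runs along segments that stay inside $\mathcal K$. Combined with $\|\nabla f\|_2\le L$, this gives $L_\phi=L$ via \eqref{eq:application-payoff-stability}, so $L_h=1$.

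\emph{Geometry.} Assumptions~\ref{ass:geometry} and~\ref{ass:network} are part of ``the remaining assumptions of Theorem~\ref{thm:main}.'' Down-closedness, and the requirement $\mathbf 0\in\mathcal K$, are compatible with the existence of an interior center $x_0$.

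With these checks done, I substitute into \eqref{eq:main-regret}. The term $\mathcal B_T$ contributes $\beta\log T$ and $\beta\widetilde G R\sqrt{2T(1+1/\delta)}$. Since $\widetilde G=2\kappa L$ and $1+1/\delta\le 2/\delta$, the latter is $O\!\left((1-e^{-1})\kappa LR\sqrt{T/(1-\rho)}\right)$. The constant terms $3GR+\widetilde GR$ are absorbed into this as well.

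For the consensus term I apply Lemma~\ref{lem:row-mixing}, which gives $M_a(W)=O((1+\log N)/\delta)$. Together with $1/\delta\le (1+\log N)/\delta$, this yields
\[
\sqrt{\tfrac{T\delta}{2(1+\delta)}}\left(M_a(W)+\tfrac1\delta\right)=O\!\left((1+\log N)\sqrt{T/\delta}\right).
\]
Multiplied by $L_\phi(1+\kappa)R=L(1+\kappa)R$, this gives the second bracket term. The remaining term $2L_\phi R(2+\kappa)$ is lower order.

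Collecting terms, with $\widetilde O$ hiding $\log T$ and the additive constant $\beta=1-e^{-1}$ written explicitly, gives the stated bound.

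The only step requiring real care is the Lipschitz verification for $h$. The constant $L_h=1$ depends on $w\ge 0$, so I must confirm that the internal points $w_{a,t}$ lie in $\mathcal K\subseteq[0,1]^d$. Lemma~\ref{lem:gauge-surrogate} guarantees $w_{a,t}\in\mathcal K$, so this holds. Everything else is bookkeeping.
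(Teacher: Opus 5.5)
Your proposal is correct and takes essentially the same route as the paper, whose proof is the single step of substituting $(\alpha,\beta,G,L_\phi)=(1/e,\,1-e^{-1},\,L,\,L)$ and $L_h=1$ into \eqref{eq:application-regret-template}; the paper does the hypothesis checks you list ($h(\mathcal K)\subseteq\mathcal K$ via down-closedness, conditional unbiasedness via the tower property, and the $1$-Lipschitzness of $\mathbf 1-e^{-w}$ on $[0,1]^d$) in the proof of Proposition~\ref{prop:dr-wrapper} and the preamble of Section~\ref{sec:applications}. Your explicit re-derivation of the template from \eqref{eq:main-regret}, using $1+1/\delta\le 2/\delta$ and Lemma~\ref{lem:row-mixing}, is a faithful expansion of that step.
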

\begin{proof}
Apply \eqref{eq:application-regret-template} with the tuple in Proposition~\ref{prop:dr-wrapper} and $L_h=1$.
\end{proof}

The four wrappers above are not new approximation reductions. Their role here is to provide the exact Action and Query Wrappers, verify the feedback and payoff-stability assumptions of Theorem~\ref{thm:main}, and obtain the corresponding decentralized guarantees.

\end{document}